\documentclass{amsart}
\usepackage{amsmath}
\usepackage{amsfonts,exscale,amssymb,amsthm,epsfig}
\usepackage{upref}
\usepackage{graphicx}
\usepackage{mathtools}
\usepackage{enumerate}
\usepackage[usenames, dvipsnames]{color}
\usepackage{accents} % setting up a left-arrow accent

\input xy
\xyoption{all}

\usepackage{ifthen}
\usepackage{tikz}
\usepackage{appendix}
\usepackage{verbatim}
\usetikzlibrary{decorations.pathmorphing}
\usetikzlibrary{calc}
%\usetikzlibrary{external}
\usepackage{hyperref}

\renewcommand{\comment}[1]{}
\newcommand{\eq}{\begin{equation}}
\newcommand{\en}{\end{equation}}
\newcommand{\rr}{\mathbb{R}}
\newcommand{\NN}{\mathbb{N}}

\newcommand{\mcal}[1]{\mathcal{#1}}

\newcommand{\combi}[2]{\begin{pmatrix}#1 \\ #2 \end{pmatrix}}

\newcommand{\nin}{\noindent}
\newcommand{\tbf}{\textbf}

\renewcommand{\int}{\mathrm{int}}
\newcommand{\parent}[1]{\accentset{\leftarrow}{#1}}

%%% Matthias's commands

\newcommand{\bN}{\mathbb{N}}

\newcommand{\bT}{\mathbb{T}}
\newcommand{\bX}{\mathbb{X}}
\newcommand{\bY}{\mathbb{Y}}

\newcommand{\cC}{\mathcal{C}}

\newcommand{\cN}{\mathcal{N}}

\newcommand{\fr}{\mathbf{r}}
\newcommand{\fs}{\mathbf{s}}
\newcommand{\ft}{\mathbf{t}}

\newcommand{\fv}{\mathbf{v}}
\newcommand{\wi}{{\tilde{\textit{\i}}}}
\newcommand{\decotree}[2]{\bT_{[#1]}^{\bullet(#2)}}
\newcommand{\DirM}{\mathrm{DM}}
\newcommand{\tshape}[1]{\mathbb{T}^{}_{[#1]}}
\newcommand{\sparent}[1]{\accentset{\longleftarrow}{\{#1\}}}

\newcommand{\pP}{\mathbf{P}}
\newcommand{\pE}{\mathbf{E}}

%\newtheorem{lemma}{Lemma}
%\newtheorem{proposition}[lemma]{Proposition}
%\newtheorem{cor}[lemma]{cor}
%\newtheorem{theorem}[lemma]{Theorem}
%\newtheorem{definition}[lemma]{Definition}

%%%%%%%%

\begin{document}

\theoremstyle{plain}
\newtheorem{thm}{Theorem}
\newtheorem{lemma}[thm]{Lemma}
\newtheorem{prop}[thm]{Proposition}
\newtheorem{cor}[thm]{Corollary}
\newtheorem{remark}[thm]{Remark}
\newtheorem{problem}{Problem}
\newtheorem{aside}{Aside}

\theoremstyle{definition}
\newtheorem{defn}{Definition}
\newtheorem{asmp}{Assumption}
\newtheorem{notn}{Notation}
\newtheorem{prb}{Problem}

\theoremstyle{remark}
\newtheorem{rmk}{Remark}
\newtheorem{exm}{Example}
\newtheorem{clm}{Claim}

\title[Consistent tree-valued down-up chains]{Projections of the Aldous chain on binary trees: Intertwining and consistency}

%\author{N\MakeLowercase{\sc oah} F\MakeLowercase{\sc orman,} S\MakeLowercase{\sc oumik} P\MakeLowercase{\sc al,} D\MakeLowercase{\sc ouglas} R\MakeLowercase{\sc izzolo, and}  M\MakeLowercase{\sc atthias} W\MakeLowercase{\sc inkel}}

\address{\hspace{-0.42cm}Noah~Forman\\ Department of Statistics\\ University of Oxford\\ 24--29 St Giles'\\ Oxford OX1 3LB, UK\\ Email: noah.forman@gmail.com}             

\address{\hspace{-0.42cm}Soumik~Pal\\ Department of Mathematics \\ University of Washington\\ Seattle WA 98195, USA\\ {Email: soumikpal@gmail.com}}

\address{\hspace{-0.42cm}Douglas~Rizzolo\\ \ Department\ \ of\ \ Mathematical\ \ Sciences\\ \ University\ \ of\ \ Delaware\\ Newark DE 19716, USA\\ Email: drizzolo@udel.edu}

\address{\hspace{-0.42cm}Matthias~Winkel\\ Department of Statistics\\ University of Oxford\\ 24--29 St Giles'\\ Oxford OX1 3LB, UK\\ Email: winkel@stats.ox.ac.uk}           

\author{Noah Forman \and Soumik Pal \and Douglas Rizzolo \and Matthias Winkel}

\keywords{Markov chains on binary trees, R\'emy tree growth, down-up chain, trees with edge-weights, intertwining, strings of beads, Aldous diffusion}

\subjclass[2010]{60C05, 60J80, 60J10}

\thanks{This research is partially supported by NSF grants DMS-1204840, DMS-1612483, and EPSRC grant EP/K029797/1}

\date{\today}

\begin{abstract}
Consider the Aldous Markov chain on the space of rooted binary trees with $n$ labeled leaves in which at each transition a uniform random leaf is deleted and reattached to a uniform random edge. Now, fix $1\le k < n$ and project the leaf mass onto the subtree spanned by the first $k$ leaves. This yields a binary tree with edge weights that we call a ``decorated $k$-tree with total mass $n$.'' We introduce label swapping dynamics for the Aldous chain so that, when it runs in stationarity, the decorated $k$-trees evolve as Markov chains themselves, and are projectively consistent over $k\le n$. The construction of projectively consistent chains is a crucial step in the construction of the Aldous diffusion on continuum trees by the present authors, which is the $n\rightarrow \infty$ continuum analogue of the Aldous chain and will be taken up elsewhere. Some of our results have been generalized to Ford's alpha model trees.  
\end{abstract}

\ \vspace{-24pt}

\maketitle

\section{Introduction}

The Aldous chain \cite{Aldous00,Schweinsberg02} is a Markov chain on the space of (rooted, for our purposes) binary trees with $n$ labeled leaves. Each transition of the chain, called a down-up move, has two stages. First, a uniform random leaf is deleted and its parent branch point is contracted away. Next, a uniform random edge is selected, we insert a new branch point into the middle of that edge, and we extend a new leaf-edge out from that branch point. This is illustrated in Figure \ref{fig:AC_move} where $n=6$ and the leaf labeled $3$ is deleted and re-inserted. The stationary distribution of this chain is the uniform distribution on rooted binary trees with $n$ labeled leaves.

\begin{figure}[t]\centering
\scalebox{0.8}{\input{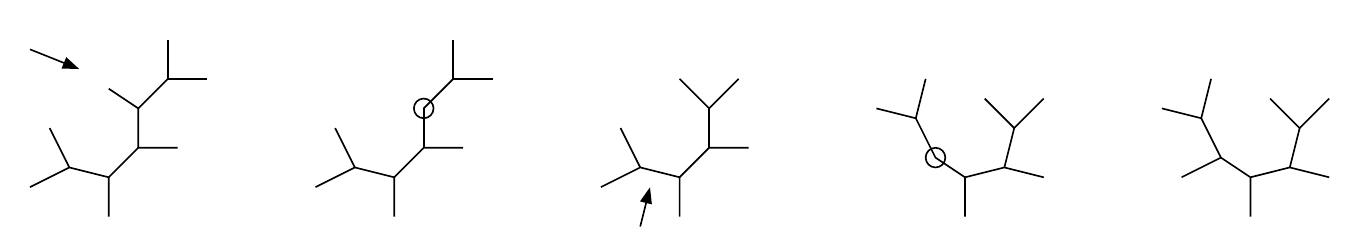_t}}
 \caption{From left to right, one Aldous down-up move.\label{fig:AC_move}}
\end{figure}

Binary trees with leaf labels are called ``cladograms.'' Markov chains on cladograms are used in MCMC algorithms for phylogenetic inference. The Aldous chain is used in this role \cite{LVB}, alongside other chains such as the subtree prune and re-graft chain \cite{EW,mrbayes},
which makes larger changes to the tree structure at each step.

Despite its simplicity, it is difficult to understand how the structure of a tree evolves over time under the Aldous chain.  In his original work, Aldous gave bounds on the relaxation time for the chain.  Soon thereafter, Schweinsberg \cite{Schweinsberg02} showed that the relaxation time is $\Theta(n^2)$.   More recently, Pal \cite{Pal13} has studied connections, initially proposed by Aldous \cite{AldousDiffusionProblem}, between the Aldous chain and an extension of Wright--Fisher diffusions to negative mutation rates.  Specifically, suppose $(T_n(j))_{j\geq 0}$ is a stationary version of the Aldous chain on trees with $n$ leaves.  Let $L_1$ and $L_2$ be leaves sampled uniformly from $T_n(0)$ without replacement and let $L_0$ be the root of $T_n(0)$.  The subtree of $T_n(0)$ spanned by these leaves and the root consists of a root, a single branch point, and two leaves.  This branch point naturally partitions the tree $T_n(0)$ into three components. As the Aldous chain runs, leaves move among components until the branch point disappears, i.e.\ a component becomes empty.  If we let $m_i(j)$, $i\in \{0,1,2\}$, be the proportion of leaves of $T_n(j)$ in the component that initially contained $L_i$ (stopped when the branch point disappears), then
\begin{equation}\label{eq:negativewf}
 \left( m_0^{(n)}(\lfloor n^2t\rfloor), m_1^{(n)}(\lfloor n^2t\rfloor), m_2^{(n)}(\lfloor n^2t\rfloor)\right)_{t\geq 0} \overset{d}{\underset{n\to\infty}{\longrightarrow}} (X_0(t),X_1(t),X_2(t))_{t\geq 0},
\end{equation}
where $(X_0(t),X_1(t),X_2(t))_{t\geq 0}$ is a Wright--Fisher diffusion with mutation rate parameters $(1/2,-1/2,-1/2)$, stopped when one of the latter two coordinates vanishes.  Similar results hold if we initially sample more leaves and look at more branch points, but the limiting description always only holds until the first time a branch point disappears.  As the number of branch points goes to infinity the time for which the limiting description is valid tends to zero.  Moreover, zero is an exit boundary for the coordinates of a Wright--Fisher diffusion that have negative mutation rates, so 
%there is no continuous Markovian 
 %extension of this diffusion beyond the disappearance of a branch point. 
 %way for this diffusion to re-enter the open simplex. 
 %extension of such a coordinate process to escapes from zero. 
the limiting process does not shed light on how to continue beyond the disappearance of a branch point. 
 In this paper, we give a mechanism through which one can select a new branch point when the original one disappears and, crucially, the process tracking the proportion of leaves in each component remains Markovian.  In fact, we can do this while keeping track of any fixed number of branch points in a way that is projectively consistent over different numbers of branch points being tracked.  We are currently working to understand the consequences of this construction for extending limits as in \eqref{eq:negativewf} beyond the first time a branch point disappears.

\begin{figure}[t]\centering
 \scalebox{.8}{\input{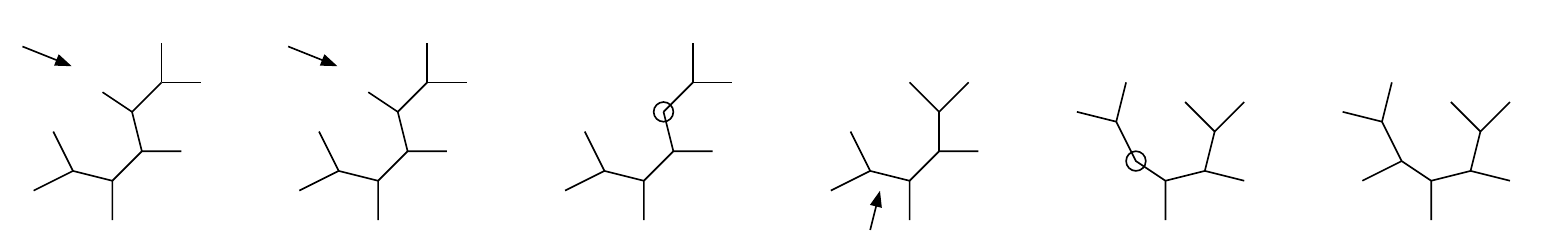_t}}
 \caption{The same down-up move as in Figure \ref{fig:AC_move}, but with label swapping as in Definition \ref{defn:simptreeupdown}. Here, $i=3, a=1, b=4$, so $\wi=4$. We swap labels $i=3$ and $\wi=4$ before deleting the chosen leaf. Label $4$ then regrows in the up-move.\label{fig:downmove}}
\end{figure}

Our strategy is to modify the dynamics of leaf labels in the Aldous chain.  Importantly, this does not affect the evolution of the unlabeled tree; it only affects the labels.  These new label dynamics give rise to a natural mechanism for selecting new branch points about which to partition the tree when old branch points die.

Given a finite set $A$, let $\bT_{A}$ be the space of rooted binary trees with $\#A$ leaves, with the leaves labeled by the set $A$. See Definition \ref{defn:bintree} below.  Similar to the Aldous chain, we introduce a new down-up chain on $\bT_{[n]}$, where $[n]:=\{1,2,3,\dots, n\}$.  

\begin{defn}[Uniform chain]\label{defn:simptreeupdown}
 Fix $n\ge 3$. We define a Markov chain $(T(j))_{j\geq 0}$ on $\bT_{[n]}$ in which each transition comprises a down-move followed by an up-move. Given $T(j) = \ft_n\in\bT_{[n]}$, $j\ge 0$, we randomly construct $T(j+1)$ as follows.
  \begin{enumerate}[(i)]    
  \item Down-move: select the leaf labeled $i\in [n]$ for removal with probability $1/n$. Then
 compare $i$ with the smallest leaf labels $a$ and $b$ in the first two subtrees on the ancestral path from leaf 
        $i$ to the root (with the convention $b=0$ if the path has only one subtree). Let $\wi=\max\{i,a,b\}$. If $\wi\neq i$, swap labels $i$ and $\wi$. Remove the leaf now labeled $\wi$ (which had been labeled $i$) and contract away its parent branch point. Call the resulting tree $\ft_{n-1}\in \bT_{[n]\setminus \{\wi\}}$.
        
      \item Up-move: insert a new leaf $\wi$ at an edge in $\ft_{n-1}$ chosen uniformly at random.
  \end{enumerate}
  %Given that the chain is in state $\ft_n$, one step in the chain consists of a down-move followed by and up-move.
\end{defn}

Figure \ref{fig:downmove} shows one down-up move of this chain. Note that, if we erase the leaf labels, the above chain is identical to the Aldous chain.  Somewhat surprisingly, although the dynamics of labels are not exchangeable in this down-up chain as they are in the Aldous chain, the two chains have the same stationary distribution.

\begin{thm}\label{prop1}
 The unique invariant distribution for the uniform chain is the uniform distribution on $\tshape{n}$. 
\end{thm}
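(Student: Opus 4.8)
The plan is as follows.

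\textbf{Step 1.} Since the chain is on a finite state space it has a unique invariant distribution as soon as it is irreducible, so it suffices to show the uniform law $\mu$ on $\bT_{[n]}$ is invariant. Aperiodicity is immediate (regrafting the leaf carrying the largest label $n$ onto its current edge is a positive‑probability self‑loop, as for $i=n$ one has $\wi=n$ and no swap), and irreducibility holds because forgetting labels yields the Aldous chain, which is irreducible on shapes, while the label assignment can be brought to any target by inserting suitable swaps along the way.

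\textbf{Step 2.} Factor a transition as a down‑move $\ft_n\mapsto(\ft_{n-1},\wi)$ followed by an up‑move that grafts a leaf labelled $\wi$ onto a uniformly chosen one of the $2n-3$ edges of $\ft_{n-1}$, using fresh independent randomness. Recall the Rémy fact: for each $\ell$ the deletion map $\ft\mapsto\ft\ominus\ell$ (delete the leaf labelled $\ell$ and contract its parent) from $\bT_{[n]}$ onto $\bT_{[n]\setminus\{\ell\}}$ has all fibres of size $2n-3$, hence pushes $\mu$ forward to the uniform law; run backwards, grafting a fresh leaf uniformly turns the uniform law on $\bT_{[n]\setminus\{\ell\}}$ into $\mu$. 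Therefore it suffices to prove: \emph{when $\ft_n\sim\mu$, the down‑move output $(\ft_{n-1},\wi)$ satisfies, for every $\ell$, that the conditional law of $\ft_{n-1}$ given $\{\wi=\ell\}$ is uniform on $\bT_{[n]\setminus\{\ell\}}$.} Indeed, the up‑move then turns each such conditional law into $\mu$, and averaging over $\ell$ gives $\mu$.

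\textbf{Step 3.} As $\ft_n\sim\mu$ and $i\in[n]$ is chosen uniformly and independently, the pair $(\ft_n,i)$ is uniform on $\bT_{[n]}\times[n]$; reparametrising by $(\ft_n\ominus i,\,i,\,e)$, where $e$ is the edge of $\ft_n\ominus i$ at which leaf $i$ was attached, is a bijection, so that datum is uniform as well. In these terms $\wi=\max\{i,a,b\}$, where $a,b$ are the least labels of the subtree below $e$ and of the subtree hanging off the upper end of $e$ in $\ft_n\ominus i$, and $\ft_{n-1}$ is $\ft_n\ominus i$ with its $\wi$‑leaf renamed to $i$. Fixing $\ell$ and a target $\fs\in\bT_{[n]\setminus\{\ell\}}$, the assertion of Step 2 becomes: the number $c_\ell(\fs)$ of pairs $(\ft_n,i)$ whose down‑move produces $(\ft_{n-1},\wi)=(\fs,\ell)$ does not depend on $\fs$. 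Such a pair corresponds to choosing an edge $e$ of $\fs$ and either, in the \emph{no‑swap} case $i=\ell$, grafting a leaf labelled $\ell$ at $e$ — legitimate precisely when $\ell$ exceeds the least label of each of the two sibling subtrees at $e$ — or, in the \emph{swap} case $i<\ell$, grafting a leaf labelled $i$ at $e$ while renaming $\fs$'s $i$‑leaf to $\ell$ — legitimate precisely when $i$ is the least label of whichever sibling subtree at $e$ the renamed leaf lies in and $\ell$ exceeds the least label of the other sibling subtree. Calling a leaf of $\fs$ \emph{small} if its label is $<\ell$, one reads off that the number of pairs arising from a given edge $e$ depends only on how many small leaves lie in each of the two sibling subtrees at $e$ and on whether $e$ is the root edge: it is $0$ unless the lower subtree contains a small leaf and the upper one contains a small leaf or is empty, and is otherwise $1+\mathbf{1}\{\text{exactly one small leaf below }e\}+\mathbf{1}\{\text{exactly one small leaf in the upper subtree at }e\}$.

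\textbf{Step 4.} The edges with nonzero contribution are exactly the root edge together with the edges of the subtree $\tau$ of $\fs$ spanned by the small leaves and the root whose upper endpoint is a branch point of $\tau$. As $\tau$ has $\ell-1$ leaves, its branch structure has $\ell-2$ branch points and hence $2(\ell-2)$ such edges; pairing each with its sibling edge and matching ``exactly one small leaf below $e$'' with the leaf-edges of the branch tree of $\tau$, the two indicator sums collapse and $c_\ell(\fs)$ works out to $0,\,2,\,4\ell-5$ for $\ell=1,\,2,\,\ge 3$ — independent of $\fs$, as required. (As a check, $\sum_{\ell=1}^n c_\ell=n(2n-3)$, the number of equally likely outcomes of a transition.) The crux is the edge‑by‑edge case analysis in Step 3 — especially the swap‑legitimacy criterion, which is exactly where the rule $\wi=\max\{i,a,b\}$ enters — together with verifying that the reconstruction $(\fs,e,i)\mapsto\ft_n$ is a bijection onto the relevant pairs, so that nothing is miscounted; Steps 1, 2 and the collapse in Step 4 are then routine.
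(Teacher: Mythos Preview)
Your argument is correct in outline and takes a genuinely different route from the paper's. The paper proves invariance probabilistically via the R\'emy/Ford growth process: the key Lemma~\ref{lem:downinv} shows that the event $E_{i,\wi}$ (that the down-move on a growth-process tree selects $i$ and swaps to $\wi$) is independent of the earlier tree $T_{\wi-1}$, so conditioning on it and dropping $\wi$ still yields $q_{n-1,\alpha}$; Theorem~\ref{prop1} then follows by exchangeability at $\alpha=1/2$. Your approach is a direct enumeration: for each target $(\fs,\ell)$, count the preimages $(\ft_n,i)$ and verify the count $c_\ell(\fs)$ is constant in $\fs$. The paper's method extends verbatim to all $\alpha\in[0,1]$ (Theorem~\ref{prop4}), whereas your counting is tied to the uniform case; on the other hand your argument is self-contained and recovers the distribution $\pP(\widetilde I=\ell)=c_\ell/n(2n-3)$ of Corollary~\ref{cor:resampdist} as a free byproduct.

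There is one genuine slip in Step~3 that you should fix. Your swap-legitimacy criterion ``$i$ is the least label of whichever sibling subtree at $e$ the renamed leaf lies in'' is too weak. After renaming $i\mapsto\ell$, the minimum label of that subtree becomes $\ell$ only if \emph{every other} label there exceeds $\ell$, i.e.\ $i$ must be the \emph{unique} small leaf in that subtree, not merely the least label. Taken literally, your stated criterion would contribute $1$ whenever the subtree contains any small leaf, yielding the wrong formula $1+\mathbf{1}\{s_1\ge 1\}+\mathbf{1}\{s_2\ge 1\}=3$ on all contributing non-root edges. Your subsequent formula with the indicators $\mathbf{1}\{\text{exactly one small leaf}\}$ is the correct one, and the collapse in Step~4 to $c_\ell=4\ell-5$ is right; so the error is in the prose rather than the computation. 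Tighten the criterion accordingly and the proof stands. (Your irreducibility sketch in Step~1 is also a bit breezy; a clean way is to note that choosing $i=n$ always gives $\wi=n$, so leaf $n$ can be moved freely, and then argue inductively.)
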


To state our main result, we require an appropriate notion of projecting a tree $\ft \in \bT_{[n]}$ onto a subtree spanned by $k$ leaves.  We give an informal definition here; for a formal construction see Section \ref{sec:decoration}.  For $\ft \in \bT_{[n]}$ and $k\leq n$, we define $\rho^{\bullet(n)}_k(\ft) := (\fs,f)$, where $\fs$ is the subtree of $\ft$ spanned by $[k]$, reduced by contracting vertices of degree $2$, and $f$ is a weight function on the edge set of $\fs$ that assigns weight to each edge equal to the number of leaves in the corresponding subtree of $\ft$. See Figure \ref{fig:decorated_collapsed}. %to $\ZZ$ such that if $e$ is an internal edge (i.e.\ neither of its vertices is a leaf) then $f(e)$ is the number of leaves of $\ell \in \ft$ such the subtree of $\ft$ spanned by $[k]\cup \{\ell\}$ is obtained from $\fs$ by inserting a branch point in $e$ and attaching leaf $\ell$ to this branch point and if $e$ is an external edge then $f(e)$ is $1$ plus this number of leaves; see Figure \ref{}. 
The following is our main result.

\begin{figure}[t]\centering
 \includegraphics[scale=0.68]{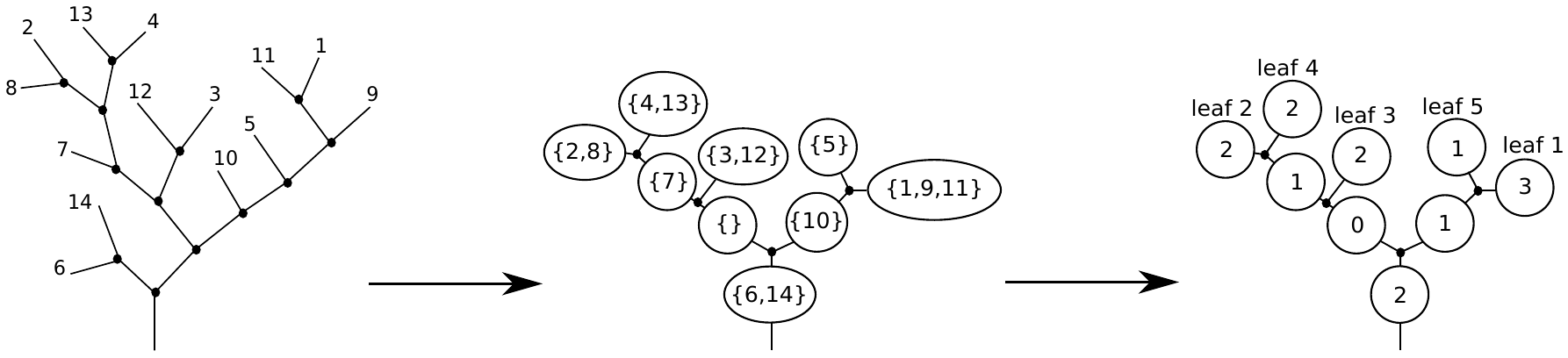}
 \caption{Left: $\ft\in\bT_{[14]}$. Middle: $\rho^{\star(14)}_{5}(\ft)$ -- $k$-tree ($k=5$) with edges marked by labels of collapsed leaves. Right: $\rho^{\bullet(14)}_{5}(\ft)$ -- $k$-tree with edges decorated by leaf mass. See Section \ref{sec:decoration}.\label{fig:decorated_collapsed}}
 %with $n=14$ labeled leaves is projected on the subtree spanned by the first $k=5$ leaves. The middle figure is that of the collapsed tree $\ft^\star$ that retains the higher leaf labels. The rightmost figure is the decorated tree that removes leaves with labels more than $k$ and simply counts their numbers.}
\end{figure}

\begin{thm}\label{thm projectedchain}
Let $(T(j))_{j\geq 0}$ be a uniform chain running in stationarity.  Then, for $k\leq n$, $(\rho^{\bullet(n)}_k(T(j)))_{j\geq 0}$ is a Markov chain running in stationarity. 
\end{thm}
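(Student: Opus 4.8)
The approach is an intertwining argument, following Rogers and Pitman. Write $\mu$ for the uniform law on $\tshape{n}$, which by Theorem \ref{prop1} is the stationary law of the uniform chain, let $P$ be its transition kernel, set $\Phi:=\rho^{\bullet(n)}_k$, and put $\bar\mu:=\mu\circ\Phi^{-1}$. The first point to record is that $\Phi$ is genuinely \emph{not} a Markov function of the chain started from an arbitrary distribution: because the swap $\wi=\max\{i,a,b\}$ uses the linear order on $[n]$, two trees sharing the same decorated $k$-tree but differing in the placement of the non-tracked labels can have different one-step laws for $\Phi$, so Dynkin's lumpability criterion fails. What makes the statement true is that we start from $\mu$, under which the non-tracked labels are exchangeable, so the conditional law of the full tree given its decorated $k$-tree has a clean ``independent grafting'' form --- precisely the setting of the intertwining criterion.

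Accordingly, I would introduce the Markov kernel $\Lambda$ from decorated $k$-trees to $\tshape{n}$ realising this disintegration: given $(\fs,f)$, the pair $(\fs,f)$ prescribes how many of the $n-k$ non-tracked leaves hang on each edge of $\fs$; partition $\{k+1,\dots,n\}$ into blocks of these sizes by a uniform ordered partition, and graft on each edge an independent uniformly random binary tree carrying the corresponding label set (leaf edges of $\fs$ keeping their tracked leaf). By exchangeability, $\Lambda((\fs,f),\cdot)=\mu(\,\cdot\mid\Phi(T)=(\fs,f))$, whence $\mu=\bar\mu\Lambda$ and $\Lambda((\fs,f),\cdot)$ is carried by $\Phi^{-1}(\fs,f)$; conversely these two properties force $\Lambda$.

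The heart of the proof is the identity $\Lambda P=\bar P\Lambda$ for a (then unique) transition kernel $\bar P$ on decorated $k$-trees, which I would establish by running one down-up move on a $\Lambda$-distributed tree $T$ to obtain $T'$ and tracking its effect on the grafting decomposition. Each tree $\ft$ breaks into ``pieces'' along the edges of its $[k]$-skeleton, and two reductions make the analysis feasible: choosing a uniform leaf of $\ft$ is the same as choosing a piece with probability proportional to its leaf-count and then a uniform leaf inside it; and choosing a uniform edge of the intermediate tree is the same as choosing a piece with probability proportional to its edge-count --- which is an affine function of its leaf-count, all bead structure telescoping away --- and then a uniform edge inside it, so both selections are governed by the decorated $k$-tree alone. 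The only moves that alter the combinatorial type of the skeleton are those in which the selected leaf carries a label in $[k]$, and this is exactly where the swap does its job: because the non-tracked labels are exchangeable under $\Lambda$, the leaf bearing the smallest label inside any piece sits at a uniformly random position in that piece, so the swap-and-delete step relocates the chosen tracked leaf to a uniformly random position within one of the first two subtrees off its ancestral path --- that subtree picked with a probability read off from the decorated $k$-tree --- after which the killed label $\wi$ is regrown by a uniform insertion. Assembling these observations edge by edge shows that, starting from a $\Lambda((\fs,f),\cdot)$-distributed $T$, one down-up step produces $T'$ that is again $\Lambda$-distributed conditionally on $\Phi(T')$; writing $\bar P((\fs,f),\cdot)$ for the law of $\Phi(T')$, this is exactly $\Lambda P=\bar P\Lambda$. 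The intertwining criterion then yields that $(\Phi(T(j)))_{j\ge0}$ is a Markov chain with kernel $\bar P$ and initial law $\bar\mu$, and $\bar\mu$ is $\bar P$-stationary since $\bar\mu\bar P=\big((\bar\mu\bar P)\Lambda\big)\circ\Phi^{-1}=\big(\bar\mu\Lambda P\big)\circ\Phi^{-1}=(\mu P)\circ\Phi^{-1}=\mu\circ\Phi^{-1}=\bar\mu$; hence the projected chain runs in stationarity.

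I expect the real work to be the second half of the intertwining: verifying that after a swap--deletion--reinsertion, conditioning on the new decorated $k$-tree restores the product form $\Lambda$. This calls for a case analysis according to where the deleted and reinserted leaves fall relative to the skeleton --- in the interior of an edge of $\fs$ (a pure shift of mass), at a branch point of $\fs$ (possibly merging or creating skeleton branch points, and possibly colliding with the root so that the combinatorial type of $\fs$ changes), or as the unique non-tracked leaf on a skeleton edge (so that deleting it contracts that edge and fuses two skeleton branch points) --- with the requirement, in each case, that the swap rule injects just enough asymmetry to keep $\Phi$ Markov without spoiling the independence and uniformity of the grafted pieces. This is bookkeeping-heavy but conceptually routine once the grafting kernel $\Lambda$ of Section \ref{sec:decoration} is set up; running the same argument for the finer, label-set-valued projection $\rho^{\star(n)}_k$ and then passing to $\rho^{\bullet(n)}_k$ is an alternative, but the order-dependence of the swap rule makes this detour non-automatic and unlikely to shorten the analysis.
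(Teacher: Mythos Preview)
Your direct intertwining $\Lambda P=\bar P\Lambda$, with $\Lambda(\ft^\bullet,\cdot)=q_{n,1/2}(\,\cdot\mid\rho_k^{\bullet(n)}=\ft^\bullet)$, fails; the paper states this explicitly. Here is where it breaks. Take $i\in[k]$ with $x_i=1$ and $y_{\sparent{i}}>0$, and condition on $I=i$. The first spinal subtree above $i$ contains a label in $[k]$, while the second, with label set $W\subseteq L_{\sparent{i}}$, has all labels exceeding $k$; hence $\wi=\min W$. After swap-and-delete, leaf $i$ sits where $\wi$ was, so the high labels now projecting to edge $\{i\}$ are exactly $W\setminus\{\min W\}$, and $\wi=\min W$ is reinserted on some edge $\beta'$. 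Conditionally on the new decorated $k$-tree (so on $|W|$ and $\beta'$), the resulting label partition of $\{k+1,\ldots,n\}$ is \emph{not} uniform: the singleton that lands on $\beta'$ is always the minimum of $W$, and the block on $\{i\}$ is always $W$ stripped of its minimum. Your observation that the smallest-labeled leaf sits at a uniform \emph{position} inside its piece is correct but insufficient --- positions are exchangeable, the labels left behind are not, and $\Lambda$ requires both.

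The paper's remedy is exactly the detour you dismiss in your final sentence. One intertwines $T(\cdot)$ with the finer projection $T_k^{\star(n)}(\cdot)=\rho_k^{\star(n)}(T(\cdot))$ that records label \emph{sets} $(L_B)_{B\in\fs}$ rather than sizes (Proposition~\ref{lem:intertwin}): since the biased label $\min W$ is now part of the state, the intertwining check reduces to showing that the internal structures $\int(V_B)$ remain conditionally independent uniform trees after one move, which they do by Lemmas~\ref{lem:spatialMarkov} and~\ref{lem:topbeadkernel}. From $T_k^{\star(n)}(\cdot)$ down to $T_k^{\bullet(n)}(\cdot)$ one then applies Kemeny--Snell (Proposition~\ref{lem:kscollapsed}), which holds because the induced kernel on collapsed trees depends on the $(L_B)$ only through their sizes. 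The two-step route is thus not an alternative but the fix; a single intertwining from $\bT_{[k]}^{\bullet(n)}$ back up to $\tshape{n}$ is unavailable.
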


In Definition \ref{defn:simpdownup} we explicitly describe the transition probabilities of the chains $(\rho^{\bullet(n)}_k(T(j)))_{j\geq 0}$.  We remark that this theorem is not true for $\rho^{\bullet(n)}_k$-projections of the Aldous chain: relatively small examples show that those projected processes are not Markovian.  It is also important that the uniform chain starts in stationarity. %There are other initial distributions of the chain for which the result will be true for sufficiently high values of $k$, but the result will not then hold for all $k$.

There is a broad literature on when a function of a Markov chain is again Markov \cite{DiacFill90,Fill92,RogersPitman}, but to our knowledge, Theorem \ref{thm projectedchain} does not directly fit any known framework. In particular, the chains studied here resemble those in the literature on down-up chains on branching graphs \cite{BoroOlsh09,Fulman05,Fulman09b,Kerov96,Pet2009}: (i) each transition comprises a down-move and an up-move, 
%. Indeed, each step in the Aldous chain, and in the uniform $n$-tree down-up chain, decomposes into a down-move and an up-move, 
and (ii) the set of rooted, leaf-labeled binary trees forms a branching graph.

%
% Rather, our approach combines two well known criteria: intertwining and the Kemeny--Snell criterion.  Our methods bear similarities to techniques used to study up-down chains on branching graphs.  Thus, these are natural candidates to analyze in the framework of down-up chains on branching graphs .

%However, our tree-valued chains are not naturally amenable to these methods.  Indeed, \cite{Fulman09} remarks that the Aldous chain is similar to the chain considered there, but that it does not fit exactly into the down-up framework, so it will be analyzed in later work. To our knowledge, that work has not yet appeared.  One of the impediments, faced when working with either the Aldous chain or uniform $n$-tree down-up chain, is that the same label that is removed into the down step is reinserted in the up step.  This dependence between the down step and the up step brings the chain outside the classical down-up framework.  Nonetheless, it is similar enough that one would hope to be able to develop intertwining relations for this chain that are similar to those that appear in the down-up setting \cite{BoroFerr14}.  We show that this is indeed the case.  Although the uniform $n$-tree down-up chain $(T_j)_{j\geq 0}$ is not interwined on top of $(\rho^{\bullet(n)}_k(T_j))_{j\geq 0}$, we find that it is interwined on top of an intermediate chain that records slightly more information about that labels that $(\rho^{\bullet(n)}_k(T_j))_{j\geq 0}$.  The Kemeny--Snell criteria can then be applied to this intermediate Markov chain.

However, as Fulman notes \cite{Fulman09}, despite these similarities, the Aldous chain is not naturally amenable to these down-up methods. %Fulman proposes to study this chain in future work, though to our knowledge, that work has not yet appeared. 
One of the impediments when working with either the Aldous chain or the uniform chain of Definition \ref{defn:simptreeupdown} is that the same label that is removed in the down-move is reinserted in the up-move.  This dependence between down-move and up-move differs from the standard down-up framework. Our approach combines two well-known criteria for functions of Markov chains to be Markov: intertwining, which arises elsewhere in the down-up literature \cite{BoroFerr14}, and the Kemeny--Snell criterion \cite{KS60}, also known as Dynkin's criterion. See also \cite{Pang17}. The uniform $n$-tree down-up chain $(T(j))_{j\geq 0}$ is intertwined on top of an intermediate Markov chain $(\rho^{\star (n)}_k(T(j)))_{j\ge 0}$ (Figure \ref{fig:decorated_collapsed}, middle panel), which then relates to $(\rho^{\bullet(n)}_k(T(j)))_{j\geq 0}$ via the Kemeny--Snell criterion. %See Figure \ref{fig:intertwin2}.
%Although the uniform $n$-tree down-up chain $(T_j)_{j\geq 0}$ is not intertwined on top of $(\rho^{\bullet(n)}_k(T_j))_{j\geq 0}$, it \emph{is} intertwined on top of an intermediate Markov chain $(\rho^{\star (n)}_k(T_j))_{j\ge 0}$ (Figure \ref{fig:decorated_collapsed}, middle panel), which then relates to $(\rho^{\bullet(n)}_k(T_j))_{j\geq 0}$ via the Kemeny--Snell criterion.

\subsection{Connection to a conjecture of Aldous}\label{sec:intro:AD}

This paper is a part of a project by the current authors to resolve a long-standing conjecture by Aldous \cite{AldousDiffusionProblem} that the Wright-Fisher limits of \eqref{eq:negativewf}, taken consistently for any number of branch points, record aspects of an underlying ``diffusion on continuum trees.'' This would be a continuum analogue of the Aldous Markov chain on finite trees. Since the stationary law of the Aldous chain, noted in Theorem \ref{prop1}, converges in a scaling limit to the Brownian continuum random tree (BCRT) \cite{Ald-91}, the latter should be the stationary distribution of the conjectured continuum process.

One way to study continuum tree structures is to randomly sample $k$ leaves and consider the subtree generated by these $k$ leaves and the root. % while projecting the rest of the leaf mass on the edges of this subtree, in a manner a. 
In \cite{Ald-91}, Aldous constructs the BCRT by taking a limit of such projections that are consistent over $k$. One naturally wonders whether projections of the Aldous chain onto the subtree generated by the first $k$ leaves of the $n$-tree give rise to projectively consistent Markov processes, thus enabling us to take a projective limit. However, this approach quickly runs into non-trivial road blocks. It has been argued elsewhere \cite{Pal13} that, in order to take a continuum limit, one must speed up time by a factor of $n^2$ for the Aldous chain on $n$-leaf binary trees. Consequently, in the limit, leaves become perfectly ephemeral: the entire leaf set dies at every instant. So how can we project onto the ``same'' leaves over time?
%, and the subtree generated by $k$ leaves cannot exist for a positive amount of time in the continuum.

To get around this problem, Aldous proposed that one consider branch points rather than leaves. This leads to the challenges raised after \eqref{eq:negativewf} above, which are resolved by Theorem \ref{thm projectedchain}. In light of this theorem, our program for constructing the conjectured Aldous diffusion in future work involves constructing a continuum analogue to (an extension, discussed in the Appendix, of) the chains described in Theorem \ref{thm projectedchain}, for each $k$, and then using projective consistency in $k$ to construct the full continuum-tree-valued process.

%A simplest version of this approach appears above, in our discussion around \eqref{eq:negativewf}: instead of tracking, at all times, the branch point that separates leaves 1 and 2, we use the two leaves to fix a branch point at the outset, and thereafter we look at the number of leaves in each component of the tree split around \emph{this branch point}, regardless of how leaves 1 and 2 may move around it. As noted in \eqref{eq:negativewf}, this Markov chain is an approximation of a generalization of the classical family of Wright-Fisher diffusions.

%However, some coordinate of this Markov chain (and the diffusion) will eventually hit zero, almost surely. If this is the coordinate corresponding to the path from the root to the branch point, rather than the two components that lie beyond the branch point, then it can regrow from population zero. In the Wright-Fisher regime, this component has a mutation rate of $+1/2$, so even after this population dies out, it will reappear via mutation. However, when one of the other components dies out (with mutation rate $-1/2$), it will not reenter in this manner. This poses our next serious problem: once a coordinate hits zero and we lose a dimension, how do we restart the process in the unit simplex of dimension two, in such a way that this projected process remains Markovian? Moreover, given our aim to understand an evolving continuum random tree, a projection down to three coordinates is not enough. We need a family of projectively consistent processes, and our rule for restarting the process must be consistent with that aim.

\subsection{Applications to statistics of partially classified hierarchical data}

Hierarchically arranged data sets are a common feature in areas such as topic modeling \cite{nCRP,nHDP} used in machine learning and natural language processing, as well as in phylogenetics \cite{felsenstein}. Both types of data can be displayed as a leaf-labeled tree. The leaves of a phylogenetic tree are labeled by a set of taxon (i.e.\ organism) names. The edges have lengths, corresponding to ``time'' between ``mutations'' in the tree. The goal of topic modeling is to classify a given set of documents according to a set of topics and to organize the topics in a hierarchy with more abstract topics near the root. This is the standard way to arrange books in libraries or assign AMS subject classifications (MSC2010) to mathematical research papers. The goal in natural language processing is to \textit{discover} these topics from the words in the given documents. The leaves of the tree may be labeled by the documents themselves. 

The decorated $k$-trees that we study in this article can be used as a model of hierarchically arranged clusters of data or as partially classified hierarchical data. As an example, consider again the AMS subject classifications. One such classification is \textit{Probability theory and stochastic processes} (60XX). This classification has seventeen sub-classifications, including \textit{Limit theorems} (60FXX) which has seven further sub-classifications, including \textit{Large deviations} (60F10) and \textit{None of the above but in this section} (60F99). Multiple papers are grouped into each classification. Each classification corresponds to a vertex of a tree. The leaves (say 60F10) can be thought of as having an associated mass given by the number (or proportion) of papers that are classified by that edge (i.e.\ papers on large deviation). On the other hand, papers tagged 60F99 contain possible further classifications which will all share the same three digits `60F' but are not fully known yet. The number of papers in this last category appear as a mass on the internal edge 60F, representing the number of leaves that would grow out of this edge if each paper received its own, individual classification. A similar data structure emerges when we consider a complete phylogenetic tree of $k$ organisms and classify an additional $m$ organisms according to the branch of the tree from which each has diverged. 

To the best of our knowledge, no thorough statistical analyses have been done regarding these decorated trees, although models in non-parametric Bayesian analysis on nested Chinese restaurant processes \cite{nCRP} and nested hierarchical Dirichlet processes \cite{nHDP} are somewhat related. Our contribution to this research is the introduction of a natural model for decorated binary trees as a projection of large random binary trees (Section \ref{sec:decoration}) and analyzing a sequence of natural Markov chains on the space of such trees that have explicit stationary distributions and are consistent over finer and coarser partial classifications (Section \ref{Sect23}).

\section{Down-up chains on binary trees}

\subsection{Preliminaries}\label{sec:prelim} For $k\in\bN$, let $[k]:=\{1,2,\ldots,k\}$. We denote by $\bT_{[k]}$ the set of rooted binary trees with $k$ leaves labeled $1,\ldots,k$, and more generally for finite $A\subset\bN$ by $\bT_A$ the set of rooted binary trees with $\#A$ leaves labeled by the elements of the label set $A$. Specifically, we have in mind
\[
\begin{split}
\bT_{[1]} &=\left\{\parbox{0.32cm}
{\includegraphics[scale=0.24]{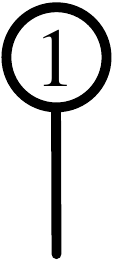}}\right\},\qquad 
\bT_{[2]}=\left\{\parbox{0.6cm}
{\includegraphics[scale=0.24]{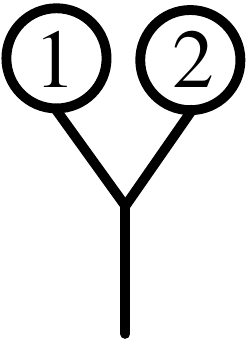}}\right\},\\
\bT_{[3]}&=\left\{\parbox{0.9cm}{\includegraphics[height=1cm]{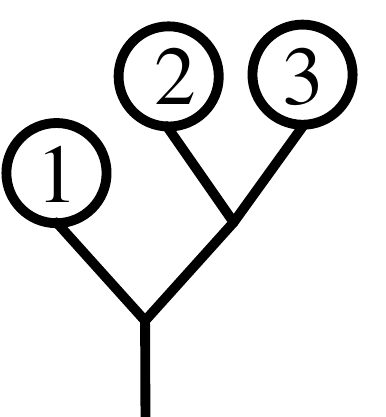}},\quad \parbox{0.9cm}{\includegraphics[height=1cm]{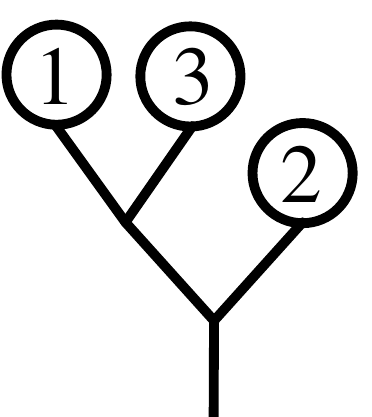}},\quad \parbox{0.9cm}{\includegraphics[height=1cm]{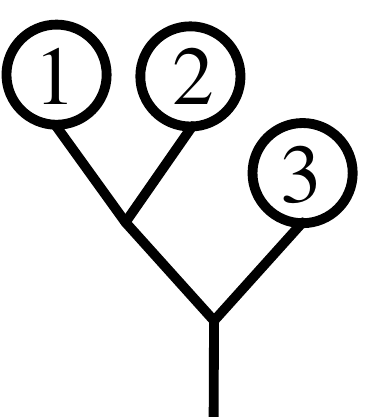}}\right\},\quad \text{etc.,}
\end{split}
\]
where we have ordered subtrees by their least labels to uniquely choose the plane tree representatives. It will be convenient to label each \textit{edge} by the set of leaf labels in the subtree above the edge. This inspires the following definition.

\begin{defn}\label{defn:bintree}
Given a finite subset $A$ of $\bN$, a collection $\ft$ of non-empty subsets of $A$ is called a \em rooted binary tree with leaves labeled by $A$ \em if the following hold. 
\begin{enumerate}[(i)]
\item $\{j\}\in\ft$ for all $j\in A$. These singleton sets are called the \textit{external edges} of $\ft$. Any other set $B$ in $\ft$ with $\# B\ge 2$ will be called an \textit{internal edge}.
\item For all $B_1,B_2\in\ft$, we have $B_1\cap B_2=\varnothing$, or $B_1\subseteq B_2$, or $B_2\subseteq B_1$.
\item For all $B\in\ft\setminus\{A\}$, there is a unique 
    $B^\prime\in\ft$ such that $B^\prime \cap B=\varnothing$ with $B\cup B^\prime \in \ft$; then $B^\prime$ is called the \textit{sibling} edge of $B$, while  $\parent{B} := B \cup B^\prime$ is called the \textit{parent} edge of $B$ and $B^\prime$, and $B$ and $B^\prime$ the \textit{children} of $\parent{B}$. The sibling of the parent of $B$ will be called the uncle of $B$. 
\end{enumerate}
For short, elements of $\bT_{[n]}$ will be called \em $n$-trees \em (and elements of $\bT_{[k]}$ $k$-trees). 
\end{defn}
Conditions (i) and (iii) imply that $A \in \ft$ for all $\ft\in\bT_A$. This notion of a tree $\ft$ is consistent with the graph-theoretic notion of a rooted tree with $\ft$ as the set of edges 
between vertices. In that setting, there is a one-to-one correspondence between non-root vertices and their parental edges, so we can use edge labels to denote non-root 
vertices. For example, the edge labeled $\{j\}$ connects leaf $j$ to its parent. Moreover, we denote the \em root vertex \em by the empty set $\varnothing$ and note that the edge
$A$ is the unique edge incident on it.

With this definition of a tree, we have  
\[
\bT_{[1]}=\Big\{\big\{\{1\}\big\}\Big\},\quad\bT_{[2]}=\Big\{\big\{\{1\},\{2\},[2]\big\}\Big\},
\]
and $\bT_{[3]}$ consists of the three trees
\[
  \big\{\!\{1\},\{2\},\{3\},\{2, 3\},[3]\big\},\big\{\!\{1\},\{2\},\{3\},\{1, 3\},[3]\big\},
  \mbox{ and }\big\{\!\{1\},\{2\},\{3\},\{1, 2\},[3]\big\}.
\]

Given a tree $\ft$ labeled by $A$, the \textit{ancestral line} or \textit{ancestral path} of an edge $B_0$ is the sequence of edges, starting with $B_1=\parent{B_0}$, and, 
inductively, $B_{k+1}=\parent{B_k}$, until we reach $B_m=A$, for some $m$. This will be referred to as the \textit{path from $B_0$ to the root}. Terms such as the 
\textit{most recent ancestor} of $B_0$ satisfying some property will be naturally defined by this notion of ancestry. If $B_0=\{j\}$ for some leaf labeled $j\in A$, there is a 
natural \textit{spinal decomposition} of the tree $\ft$ into the sequence of subtrees rooted along this ancestral line. That is, consider the vertices labeled by 
$B_1, B_2, \ldots, B_m$. At each $B_k$, there is a binary tree, rooted at $B_k$, whose leaves are labeled by $B_{k-1}'$. Since $\{j\} ,  B_{k-1}',\; k=1,2,\ldots, m$, are 
disjoint and exhaust $A$, this gives a decomposition of $\ft$. We will refer to the \textit{$k$th subtree on this path} to refer to the subtree rooted at $B_k$. 
%Similarly, the \textit{$k$th closest subtree to the root on this path} will refer to the one rooted at $B_{m-k+1}$. 

If $C\subset A$ and $\ft$ is a tree labeled by $A$, we define $\ft\cap C$ to be the collection of non-empty elements of the set $\left\{ B\cap C, B \in \ft  \right\}$. For $\ft\in\bT_A$ and $j\in A$, define 
\[
\fs:=\ft- j:=\ft\cap(A\setminus\{j\})=\{B\setminus\{j\},B\in\ft\}\setminus\{\varnothing\}
\]
to be the tree $\ft$ with leaf $j$ removed. Note that the parent $\sparent{j}$ and the sibling $\{j\}'$ in $\ft$ have been identified as one edge in $\fs$, and that all edges in $\ft$ on the ancestral line of leaf $j$ have lost $j$ from their label sets in $\fs$. 

If we are given an edge $S\in\fs$, we can add the leaf $j$ as a sibling of $S$ by splitting $S$ into two edges $S$ and $S\cup\{j\}$, and adding $j$ to the label set of all edges on the ancestral line of $S$ in $\fs$. The resulting tree, denoted by  $\fs+(S,j)$, is therefore 
\[
\{B\in\fs\colon B\cap S=\varnothing\}\cup\{B\in\fs\colon B\subseteq S\}\cup\{B\cup\{j\}\colon B\in\fs\mbox{ and }S\subseteq B\}\cup\{\{j\}\}.
\]

We refer to the operation $(\ft,j)\mapsto\ft- j$ as the \em deletion of leaf $j$ from $\ft$\em, and to the operation $(\fs,S,j)\mapsto\fs+(S,j)$ as the \em insertion of leaf $j$ into $\fs$ on edge $S$.\em  

It is now easy to see that for each fixed $j\in A$, we have 
$$\bT_A=\{\fs+(S,j)\colon \fs\in\bT_{A\setminus\{j\}},S\in\fs\},$$
and that the representation of an element of $\bT_A$ as $\fs+(S,j)$ is unique so that, inductively, we obtain the well-known enumeration formula for numbers  
\eq\label{eq:enumtree}
\#\bT_A=\prod_{1\le i\le \#A-1}(2i-1)
\en
of rooted binary trees with $\#A$ labeled leaves.

\subsection{Proof of Theorem \ref{prop1} and generalization to Ford's alpha model trees} 

The proof of Theorem \ref{prop1} requires the concept of a \em binary tree growth process \em (see \cite{PW09}). This is a sequence  
$(T_n,n\ge 1)$ of trees $T_n\in\bT_{[n]}$ with the consistency property that $T_n- n=T_{n-1}$ for all $n\ge 2$. As this entails that $T_n=T_{n-1}+(S_{n-1},n)$ for some $S_{n-1}\in T_{n-1}$, the conditional distribution of $T_n$ given $T_{n-1}$ is specified by the conditional distribution of $S_{n-1}$ given $T_{n-1}$. 

%The uniform binary tree growth process is the particular case when each $T_n$ is distributed according to the uniform distribution on $\bT_{[n]}$. The Markov chain is sometimes called R\'emy growth process. 

\begin{defn}[R\'{e}my's uniform growth process]\label{defn:remy} Let $T_1$ be the unique element in $\bT_{[1]}$. For every $n\ge 2$,
  let $T_n:=T_{n-1}+(S_{n-1},n)$, where conditionally given $T_{n-1}$, the random edge $S_{n-1}$ is uniformly distributed on 
  the set of edges of $T_{n-1}$. Then $(T_n,n\ge 1)$ is called R\'emy's uniform binary tree growth process or just \em uniform growth 
  process \em \cite{remy}. 
\end{defn}

In particular, $T_n$ is uniformly distributed on $\bT_{[n]}$. We will denote the uniform distribution on $\tshape{n}$ by $q_{n,1/2}$. The parameter $1/2$ alludes to a one-parameter family of tree growth process, due to Ford \cite{For-05}. See also Pitman and Winkel \cite{PW09}. 

\begin{defn}[Ford's alpha growth process]\label{defn:ford}
  Fix $\alpha\in [0,1]$. Let $T_1$ and $T_2$ be the unique elements in $\bT_{[1]}$ and $\bT_{[2]}$, respectively. Now, 
  inductively, given $T_{n-1}$ for some $n\ge 3$, assign a weight $1-\alpha$ to each external edge and a weight $\alpha$ to 
  each internal edge of $T_{n-1}$, let $S_{n-1}$ be a random edge of $T_{n-1}$ whose distribution is proportional to these weights, and 
  define $T_n= T_{n-1} + \left( S_{n-1}, n \right)$. Then $\left( T_n,\, n\ge 1  \right)$ is called Ford's alpha model 
  growth process or just \em alpha growth process\em. 
\end{defn}

For $\alpha=1/2$, this is the uniform growth process. We denote the distribution of $T_n$ under the alpha growth process by
$q_{n,\alpha}$. The case for $\alpha=0$ is also known as the Yule model while $\alpha=1$ is called the Comb. 
Crucially, the leaf labels of $T_n$ are only exchangeable in the uniform $\alpha=1/2$ case.
Naturally, one can ask if Theorem \ref{prop1} generalizes to other values $\alpha\in[0,1]$. The answer is yes and no. A slightly different Markov chain given below generalizes to all $\alpha \in [0,1]$. However, for $\alpha=1/2$, this does not give us the uniform chain of Definition \ref{defn:simptreeupdown}, which is our main interest. 

\begin{defn}[Alpha chain]\label{defn:ntreeupdown} Fix $n\ge 3$. We define a Markov chain $(T(j))_{j\ge 0}$ on $\bT_{[n]}$ in which each 
transition comprises a down-move followed by an up-move. Given $T(j)=\ft_n \in \bT_{[n]}$, $j\ge 0$, we randomly construct $T(j+1)$,
as follows. 
  \begin{enumerate}[(i)]    
    \item Down-move: select the leaf labeled $i\in\{1,\ldots,n\}$ for removal with probability $1/n$. Compare $i$ with the smallest 
      leaf labels $a$ and $b$ in the first two subtrees on the ancestral path from leaf $i$ to the root ($b=0$ if 
      the path has only one
      subtree). Let $\wi=\max\{i,a,b\}$. Swap labels $i$ and $\wi$ and remove the leaf now labeled $\wi$ (which had been labeled $i$).
      Relabel leaves $\wi+1, \ldots, n$ by $\wi, \ldots, n-1$, respectively. This produces an element $\ft_{n-1}$ in 
      $\tshape{n-1}$.    
    \item Up-move: insert a new leaf labeled $n$ according to the alpha growth process.
  \end{enumerate}
\end{defn}

\begin{thm}\label{prop4} For each $n\ge 3$ and each $\alpha\in [0,1]$, the unique invariant distribution for the alpha chain on $n$-trees is $q_{n,\alpha}$. 
\end{thm}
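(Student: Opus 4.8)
The plan is to show that $q_{n,\alpha}$ is invariant by a one-step computation, using the key structural fact that the down-move of the alpha chain is essentially the inverse of the up-move of the alpha growth process. Recall from Definition~\ref{defn:ford} that under the alpha growth process, $T_n = T_{n-1} + (S_{n-1}, n)$ where $S_{n-1}$ is chosen from $T_{n-1}$ with probability proportional to the alpha weights ($1-\alpha$ on external edges, $\alpha$ on internal edges). I would first establish a \textbf{reversal lemma}: if $T_n \sim q_{n,\alpha}$ and we perform the down-move of Definition~\ref{defn:ntreeupdown} (select a uniform leaf $i$, swap with $\wi$, delete, and relabel), then the resulting tree $\ft_{n-1}$ has law $q_{n-1,\alpha}$, \emph{and} moreover the pair (tree $\ft_{n-1}$, the edge $S$ at which the deleted leaf was attached) has exactly the joint law of $(T_{n-1}, S_{n-1})$ under the alpha growth process. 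If this holds, then the up-move — which by definition re-inserts leaf $n$ according to the alpha growth process — simply reconstructs a sample from $q_{n,\alpha}$, and invariance follows immediately. Uniqueness of the invariant distribution is routine: the chain is irreducible on the finite set $\bT_{[n]}$ (one can reach the caterpillar from any tree and back, as the underlying unlabeled dynamics are those of the Aldous chain) and aperiodic (there is positive probability of returning to the same tree in one step), so the finite-state-space ergodic theorem applies.

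The heart of the matter is the reversal lemma, and the main obstacle is the \textbf{label-swapping bookkeeping}: under $q_{n,\alpha}$ with $\alpha \neq 1/2$, the leaf labels are \emph{not} exchangeable, so one cannot simply argue ``delete a uniform leaf.'' The purpose of the $i \leftrightarrow \wi$ swap is precisely to repair this. I would prove the lemma by a direct probability-pushforward computation. Fix a target tree $\fs \in \bT_{[n-1]}$ and an edge $S \in \fs$; I want to show
\[
\bP\big(\ft_{n-1} = \fs,\ \text{deleted leaf was attached at } S\big) \;=\; q_{n-1,\alpha}(\fs)\cdot w_\alpha(S),
\]
where $w_\alpha(S)$ is the normalized alpha weight of $S$ in $\fs$. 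The left side is a sum over all $(\ft_n, i)$ with $\ft_n \sim q_{n,\alpha}$, $i$ uniform, such that the down-move sends $(\ft_n, i)$ to the configuration $(\fs, S)$. One uses the sampling consistency of the alpha growth process (Pitman–Winkel \cite{PW09}) to express $q_{n,\alpha}(\ft_n)$ as $q_{n-1,\alpha}(\ft_n - n)\cdot(\text{alpha weight of the edge where }n\text{ sits})$, iterated down to the subtree structure, so that $q_{n,\alpha}(\ft_n)$ factors in a way compatible with removing \emph{any} leaf, not just leaf $n$. The swap rule $\wi = \max\{i,a,b\}$ is engineered so that after swapping and deleting $\wi$, the label that ``should'' have been regrown under the growth-process ordering is the one actually removed; checking this amounts to verifying that $\wi$ is always the largest label in the union of the two subtrees hanging off the spine above leaf $i$'s insertion point, which is exactly the label that would have been inserted last among those affected.

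I would organize the argument in the following steps: (1) state and use the Markov branching / sampling-consistency identity for $q_{n,\alpha}$ that lets $q_{n,\alpha}(\ft)$ be written as a product over the spine of any chosen leaf of alpha weights, picking up a combinatorial factor that is \emph{uniform over which leaf is chosen} — this is the precise sense in which ``delete a uniform leaf'' is the reverse of ``grow according to alpha''; (2) verify the combinatorial identity that the swap $i \mapsto \wi=\max\{i,a,b\}$ followed by deletion and relabeling is a bijection, for each fixed $\fs \in \bT_{[n-1]}$ and $S\in\fs$, between the preimages $(\ft_n,i)$ and a set whose total $q_{n,\alpha}\otimes(\text{uniform})$-mass equals $q_{n-1,\alpha}(\fs)\,w_\alpha(S)$; (3) conclude the reversal lemma, then compose with the up-move to get invariance, then add the short irreducibility/aperiodicity argument for uniqueness. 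I expect step~(2) — the exact matching of the swap rule to the growth-process label ordering, keeping track of the relabeling of $\wi+1,\dots,n$ — to be the delicate part, since it is where $\alpha \neq 1/2$ genuinely bites and where an off-by-one in the label conventions would break the proof; steps (1) and (3) are essentially bookkeeping given the cited results on alpha growth processes.
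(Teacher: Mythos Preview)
Your high-level plan---prove a reversal lemma that the down-move sends $q_{n,\alpha}$ to $q_{n-1,\alpha}$, then compose with the alpha-growth up-move---matches the paper's. But two pieces of your proposed reversal lemma are wrong. The ``moreover'' about the joint law of $(\ft_{n-1},S)$ is false: already for $n=3$ one computes that the edge where the deleted leaf sat is uniform over the three edges of $T_2$, not alpha-weighted. This is harmless, since the up-move draws a fresh edge independently and only the marginal $\ft_{n-1}\sim q_{n-1,\alpha}$ is needed; just drop the claim. More seriously, your reading of $\wi$ is incorrect: $\wi=\max\{i,a,b\}$ with $a,b$ the \emph{smallest} labels in the first two spinal subtrees, so $\wi$ is \emph{not} ``the largest label in the union of the two subtrees''---labels larger than $\wi$ typically live there. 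Since your step~(2) bijection rests on this mischaracterisation, it would not go through as written.

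The paper proves the reversal lemma by a conditioning argument rather than any bijection or pushforward sum. It conditions on $E_{i,\wi}=\{I=i,\ \text{swap label}=\wi\}$ and observes that this event is determined solely by where leaves $\wi,\wi+1,\ldots,n$ attach in the growth process: for $i<\wi$, leaf $\wi$ lands on one of the first two edges on the ancestral path from $i$ in $T_{\wi-1}$, and no later leaf lands on the first two such edges in $T_{\wi}$. Hence $E_{i,\wi}$ is \emph{independent of $T_{\wi-1}$}. Swapping and dropping $\wi$ together with all higher leaves recovers exactly $T_{\wi-1}\sim q_{\wi-1,\alpha}$; re-growing the relabeled leaves $\wi,\ldots,n-1$ under the conditioning merely forbids one fixed internal and one fixed external edge at each step, which leaves the alpha rule unbiased among the remaining edges. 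Thus, conditionally on each $E_{i,\wi}$, the down-move outputs a $q_{n-1,\alpha}$ sample, and averaging over $(i,\wi)$ finishes with essentially no computation. One last correction: your irreducibility claim fails at $\alpha\in\{0,1\}$; the paper instead notes a unique recurrent communicating class there.
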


\begin{figure}[t]
\begin{tabular}{c@ {\hskip 0.5 in} c@ {\hskip 0.5 in} c@ {\hskip 0.5 in} c}

\begin{tikzpicture}[scale=0.4]
 \node {(1)} [grow'=up]
 child {[fill] circle (2pt)
 	child{[fill] circle (6 pt)} 
	child [black] {[fill] circle (2pt) edge from parent[red, thick]
		child [black] {node {$\wi$} edge from parent[black, thin]}
		child [black] {node {$i$} edge from parent[red, thick]}
	}
	};
\end{tikzpicture} 

	& 

\begin{tikzpicture}[scale=0.4]
 \node {(2)} [grow'=up]
 child {[fill] circle (2pt)
 	child{[fill] circle (6 pt)} 
	child [black] {[fill] circle (2pt) edge from parent[red, thick]
		child [black] {node {$i$} edge from parent[black, thin]}
		child [black] {node {$\wi$} edge from parent[red, thick]}
	}
	};
\end{tikzpicture} 

& 
	
\begin{tikzpicture}[scale=0.4]
 \tikzstyle{level 2}=[sibling distance =20 mm]
 \tikzstyle{level 3}=[sibling distance=10 mm]
 \node {(3)} [grow'=up]
 child {[fill] circle (2pt)
  	child{[fill] circle (2 pt)
		child{[fill] circle (2 pt)}
		child{[fill] circle (6 pt)}
		} 
	child [black] {[fill] circle (2pt) edge from parent[red, thick]
		child [black] {[fill] circle (2 pt) edge from parent[thin]
			child [black] {[fill] circle (2 pt)}
			child [black] {node {$i$} edge from parent[black]}
			}
		child [black] {node {$\wi$} edge from parent[red, thick]}
	}
	};
\end{tikzpicture}	

	&

\begin{tikzpicture}[scale=0.4]
 \tikzstyle{level 2}=[sibling distance =20 mm]
 \tikzstyle{level 3}=[sibling distance=10 mm]
 \node {(4)} [grow'=up]
 child {[fill] circle (2pt)
 	child{[fill] circle (2 pt)
		child{[fill] circle (2 pt)}
		child{[fill] circle (6 pt)}
		} 
	child{[fill] circle (2pt)
		child {[fill] circle (2 pt)}
		child [black] {node {$i$}}
	}
	};
\end{tikzpicture}

\end{tabular}	
\caption{From left to right, we swap $i$ and $\wi$ in (2) and insert two new leaves in (3). No leaves attach to the red edges. The final tree in (4) is obtained by dropping $\wi$ and the red edges.}
\label{fig:swapgrowdrop}
\end{figure}
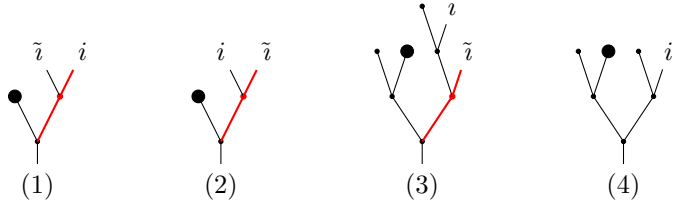

Key to the proof of Theorems \ref{prop1} and \ref{prop4} is a consistency property of the trees $T_n$, $n\ge 1$, in the alpha growth 
process under the down-move part of the alpha chain: 

\begin{lemma}\label{lem:downinv} Consider the tree $T_n$ in the alpha growth process. Fix $1\le i \le \wi \le n$, with $\wi \ge 2$. 
  Let $E_{i,\wi}$ be the event that $\wi=\max\left( i,a,b  \right)$ where $a$ is the smallest label in the first subtree on the 
  ancestral path from leaf $i$ to the root in $T_{n}$, and $b$ is the smallest label in the second subtree on that same path. Consider 
  the conditional probability distribution $\pP\left(\cdot \mid E_{i,\wi}\right)$. Swap leaves $i$ and $\wi$, drop leaf $\wi$, and relabel 
  leaves $\wi+1, \ldots, n$ by $\wi,\ldots, n-1$, respectively. Then the resulting tree, $\widetilde{T}_{n-1}$, under 
  $\pP\left( \cdot \mid E_{i,\wi}  \right)$, is distributed as $q_{n-1,\alpha}$.
\end{lemma}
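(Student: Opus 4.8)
The plan is to prove the lemma by induction on $n$, using the recursive structure of the alpha growth process: $T_n = T_{n-1}+(S_{n-1},n)$, where $T_{n-1}\sim q_{n-1,\alpha}$ and, conditionally on $T_{n-1}$, the edge $S_{n-1}$ has the alpha distribution on the edge set of $T_{n-1}$ (weight $1-\alpha$ on external edges, $\alpha$ on internal ones), so that an $m$-leaf tree carries total weight $m-\alpha$. Write $\sigma^{(n)}$ for the down-move map of Definition \ref{defn:ntreeupdown} with chosen leaf $i$, so $\widetilde T_{n-1}=\sigma^{(n)}(T_n)$. The base case $n=2$ (where $i=\wi=2$ is forced) is immediate, as is the case $i=\wi=n$ for any $n\ge 3$, since then $E_{i,\wi}$ is the whole space, $\sigma^{(n)}$ merely deletes leaf $n$, and $\sigma^{(n)}(T_n)=T_{n-1}\sim q_{n-1,\alpha}$. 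For $i<n$ the inductive step splits according to whether $\wi=n$ or $\wi<n$.

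\emph{Case $\wi=n$, $i<n$.} The point is that, given $T_{n-1}$, this event is governed by $S_{n-1}$ alone: $a=n$ forces leaf $n$ to be the entire sibling subtree of leaf $i$, i.e.\ $S_{n-1}=\{i\}$; $b=n$ forces leaf $n$ to be the entire uncle subtree, i.e.\ $S_{n-1}=\parent{\{i\}}$ (the parent edge of leaf $i$ in $T_{n-1}$, or the root edge if leaf $i$ is a child of the root); and no other choice of $S_{n-1}$ gives $\max(i,a,b)=n$. Since $\{i\}$ is external and $\parent{\{i\}}$ internal, $\pP(E_{i,n}\mid T_{n-1})=\frac{1-\alpha}{(n-1)-\alpha}+\frac{\alpha}{(n-1)-\alpha}=\frac{1}{(n-1)-\alpha}$, independent of $T_{n-1}$; hence conditionally on $E_{i,n}$ we still have $T_{n-1}\sim q_{n-1,\alpha}$. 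It then remains to check the purely combinatorial identity $\sigma^{(n)}(T_n)=T_{n-1}$ on $E_{i,n}$: after swapping labels $i$ and $n$, deleting the leaf now named $n$ undoes the last growth step (in the subcase $S_{n-1}=\parent{\{i\}}$, leaf $i$ simply slides back up onto the contracted branch point), and no relabelling is needed because $\wi=n$.

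\emph{Case $\wi<n$.} Then $i<n$, and leaf $n$ is neither the whole sibling subtree nor the whole uncle subtree of leaf $i$, so inserting leaf $n$ changes neither $a$ nor $b$; consequently the event $E_{i,\wi}$ (computed in $T_n$) equals $\{E^{(n-1)}_{i,\wi}\text{ holds in }T_{n-1}\}\cap\{S_{n-1}\notin\{\{i\},\parent{\{i\}}\}\}$, with $E^{(n-1)}_{i,\wi}$ the analogous event in the $(n-1)$-leaf tree. As $\pP(S_{n-1}\notin\{\{i\},\parent{\{i\}}\}\mid T_{n-1})=\frac{(n-2)-\alpha}{(n-1)-\alpha}$ is again constant in $T_{n-1}$, conditioning on $E_{i,\wi}$ leaves $T_{n-1}\sim q_{n-1,\alpha}(\,\cdot\mid E^{(n-1)}_{i,\wi})$ and, given $T_{n-1}$, leaves $S_{n-1}$ alpha-distributed over the edges of $T_{n-1}$ other than $\{i\}$ and $\parent{\{i\}}$. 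The crux is then a commutation identity: there is a natural bijection $\beta$ from those edges onto all edges of $\sigma^{(n-1)}(T_{n-1})$, preserving the external/internal dichotomy, with $\sigma^{(n)}(T_n)=\sigma^{(n-1)}(T_{n-1})+(\beta(S_{n-1}),n-1)$. Since deleting leaf $i$ removes exactly one external and one internal edge, the domain of $\beta$ has total weight $\big((n-1)-\alpha\big)-(1-\alpha)-\alpha=(n-2)-\alpha$, matching the total weight of the $(n-2)$-leaf tree $\sigma^{(n-1)}(T_{n-1})$, so conditionally on $\sigma^{(n-1)}(T_{n-1})$ the edge $\beta(S_{n-1})$ is exactly alpha-distributed. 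By the inductive hypothesis $\sigma^{(n-1)}(T_{n-1})\sim q_{n-2,\alpha}$, whence $\sigma^{(n)}(T_n)$ arises from a $q_{n-2,\alpha}$-tree by one alpha growth step, i.e.\ $\sigma^{(n)}(T_n)\sim q_{n-1,\alpha}$. (The subcase $\wi=i<n$ is included, with no label swap occurring.)

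\emph{Main obstacle.} The probability computations and independence arguments are short once the events are correctly identified; the substance lies in the commutation identity of the case $\wi<n$. One must construct $\beta$ and verify the identity by tracking, through the swap, the deletion, the contraction, and the relabelling, what becomes of each edge of $T_{n-1}$ according to where $S_{n-1}$ sits relative to the ancestral path of leaf $i$ --- inside the sibling subtree of $i$, inside the uncle subtree, higher on the spine of $i$, adjacent to leaf $\wi$, or disjoint from all of these --- and to confirm in each configuration that $\beta$ respects external versus internal edges and interacts correctly with the degenerate case $b=0$. This case analysis, though elementary, carries essentially all the content of the lemma; organising it cleanly, perhaps by first recording how the operations $\ft-j$ and $\fs+(S,j)$ of Section \ref{sec:prelim} act on the explicit edge-set descriptions, is the main task.
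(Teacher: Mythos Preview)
Your proposal is correct and the strategy is sound, but it is organised differently from the paper's argument. The paper does not induct on $n$; instead it goes directly to $T_{\wi-1}$. The key observation there is that the event $E_{i,\wi}$ can be rewritten purely in terms of where leaves $\wi,\wi+1,\ldots,n$ attach in the growth process: leaf $\wi$ must land on one of the two edges $\{i\}$ and $\parent{\{i\}}$ in $T_{\wi-1}$ (if $i<\wi$), and each subsequent leaf must avoid the resulting two edges $\{i\}$ and $\parent{\{i\}}$ in $T_{\wi}$. Since at every step this forbids one external and one internal edge, the conditioning is independent of $T_{\wi-1}$ and leaves the remaining attachments alpha-distributed on the surviving edge set. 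After the swap and deletion those two edges vanish, so what remains is exactly unconstrained alpha growth of $n-\wi$ further leaves from a $q_{\wi-1,\alpha}$ tree.

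Your induction is the same argument unrolled one step at a time: your case $\wi=n$ is the paper's treatment of leaf $\wi$, and your case $\wi<n$ handles leaves $\wi+1,\ldots,n$ iteratively rather than all at once. The commutation identity $\sigma^{(n)}(T_{n-1}+(S_{n-1},n))=\sigma^{(n-1)}(T_{n-1})+(\beta(S_{n-1}),n-1)$ that you flag as the main obstacle is precisely the paper's assertion that ``we still have an alpha growth process starting from $T_{\wi-1}$,'' stated for a single step. The paper's global formulation sidesteps your case analysis because once one sees that the down-move deletes exactly the forbidden pair of edges and relabels everything else bijectively, the claim follows without tracking individual insertions. Both routes are valid; the paper's is shorter, while yours makes the edge bijection $\beta$ (and the preservation of external versus internal) explicit, which is arguably more transparent if one wants every combinatorial step written out.
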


\begin{proof}%[Proof of Lemma \ref{lem:downinv}] 
  Consider an alpha growth process $(T_k,k\ge 1)$. First we claim that the event $E_{i,\wi}$ is independent of $T_{\wi-1}$. This 
  statement is trivial for $\wi=2$, when $T_{\wi-1}$ is deterministic. Henceforth, assume $\wi\ge 3$. Now $E_{i,\wi}$ can be expressed as follows. 
  \begin{enumerate}[(i)]
    \item If $i < \wi$, consider the ancestral path of leaf $i$ in $T_{\wi-1}$. There are at least two edges on this path (say, $\{i\}$ 
      and $[\wi-1]$), and $E_{i,\wi}$ is equivalent to the 
      event that $\wi$ gets attached to one of the first two edges on the ancestral path from leaf $i$ to the root, and none of the 
      leaves with label in $\{ \wi+1, \ldots, n \}$ gets attached to the first two edges on the ancestral path of leaf $i$ in $T_{\wi}$. 
    \item Otherwise $i=\wi$. Then $E_{\wi,\wi}$ is the event that none of the leaves with label in $\{ \wi+1, \ldots, n \}$ gets 
      attached to the first two edges on the ancestral path from $\wi$ to the root in $T_{\wi}$.
  \end{enumerate}
  In case (i) the leaf $\wi$ has to get attached to a given pair of internal and external edges, and, in both cases (i) and (ii), all   
  higher labeled leaves are conditioned not to get attached to a given pair of internal and external edges. Hence, in both cases, 
  $E_{i,\wi}$ is independent of $T_{\wi-1}$, and  $T_{\wi-1}$ remains distributed as $q_{\wi-1, \alpha}$ under the conditional law 
  $\pP\left( \cdot \mid E_{i, \wi}  \right)$,.

  Now, consider $T_n$ under $\pP\left( \cdot \mid E_{i, \wi}  \right)$. Swapping $i$ and $\wi$, and dropping the new leaf labeled $\wi$  
  and all the higher-labeled ones, we still obtain $T_{\wi-1}$, which is distributed as $q_{\wi-1, \alpha}$. See Figure 
  \ref{fig:swapgrowdrop}. Under $\pP\left( \cdot \mid E_{i, \wi}  \right)$, given $T_{\wi-1}$, consider how new leaves get attached to 
  form $\widetilde{T}_{n-1}$. The old leaf $\wi$ never gets attached, therefore the two red edges in Figure \ref{fig:swapgrowdrop} (2) 
  do not exist anymore. Leaves $(\wi+1, \ldots,  n)$ have been relabeled $(\wi, \ldots, n-1)$. Consider the distribution of the edges 
  $(S_\wi, \ldots, S_n )$ where these successive leaves get attached. Each new leaf still gets attached to every internal edge with 
  probability proportional to $\alpha$ and to an external edge with probability proportional to $(1-\alpha)$. Hence, we still have an 
  alpha growth process starting from $T_{\wi-1}$. Since the law of $T_{\wi-1}$ is $q_{\wi-1,\alpha}$, this completes the proof. 
\end{proof}

\begin{proof}[Proof of Theorems \ref{prop1} and \ref{prop4}] 
  The uniqueness of the invariant distribution in both theorems is immediate since the Markov chain is finite and 
  either is irreducible (for $\alpha\in(0,1)$) or has a unique recurrent communicating class (for $\alpha\in\{0,1\}$). 

  Consider Theorem 
  \ref{prop4} and one step of the Markov chain. Suppose $T_n(0) \in \tshape{n}$ is distributed as $q_{n,\alpha}$ and let $I$ be
  uniformly chosen from $[n]$, independent of $T_n(0)$. For a given pair of $1\le i\le \wi\le n$, $\wi\ge 2$, let $F_{i, \wi}$ be the 
  event that $I=i$ is selected for the down-move and swapped with label $\wi$. 
  %=\max(i,a,b)$ in $T_n(0)$ is removed in the down-move. 
  Condition on $F_{i, \wi}$. Under this conditional distribution, apply Lemma \ref{lem:downinv} to get a tree $\widetilde{T}_{n-1}(0)$, 
  which is distributed as $q_{n-1, \alpha}$. Add a leaf labeled $n$ according to the alpha growth process to get a tree ${T}_n(1)$ that 
  is distributed according to $q_{n,\alpha}$. Since the conditional distribution of $T_n(1)$ is $q_{n,\alpha}$, given $F_{i,\wi}$, 
  irrespective of the pair $(i, \wi)$, its unconditional distribution is also $q_{n,\alpha}$. This completes the proof of Theorem 
  \ref{prop4}. 

  To prove Theorem \ref{prop1}, given $F_{i, \wi}$, get $T_{n}(1)\sim q_{n,1/2}$ as above. Relabel the new leaf $n$ by $\wi$ and leaves 
  labeled $(\wi, \ldots, n-1 )$ by $( \wi+1,\ldots, n)$. By exchangeability of labels under $q_{n,1/2}$, this new tree is again uniformly 
  distributed. But, of course, this is the same as simply inserting a new leaf labeled $\wi$ while leaving the higher-labeled leaves 
  unchanged. Integrate over $F_{i, \wi}$, as before, to obtain the result.   
\end{proof}

Consider again the uniform chain from Definition \ref{defn:simptreeupdown}. Let $\widetilde{I}$ denote the random label that is inserted
in the first up-move. We say that the leaf $\widetilde{I}$ is resampled. As an application of our proof technique, we show that the 
distribution of the resampled label is not uniform over $[n]$. This is a special case of the alpha chain (Definition 
\ref{defn:ntreeupdown}) where $\widetilde{I}$ now refers to the label removed in the down-move. 

\begin{cor}\label{cor:resampdist}
Suppose $T_n$ is distributed according to $q_{n,\alpha}$ on $\tshape{n}$. Consider one step of the alpha chain starting at $T_n$ and let $\widetilde{I}$ denote the random leaf label removed in the down-move. Then%, the distribution of $\widetilde{I}$ is given by the following.
\eq\label{eq:distwi}
\pP\left(  \widetilde{I}=\wi \right)=\begin{dcases}
\frac{1}{n(n-1-\alpha)}, & \text{if $\wi=2$,}\\
\frac{2\wi -2-\alpha}{n(n-1-\alpha)}, & \text{if $3\le \wi \le n$}.
\end{dcases}
\en
In particular, for the stationary uniform chain, the case $\alpha=1/2$ of the above gives the distribution of the resampled leaf label.
\end{cor}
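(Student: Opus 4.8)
The plan is to compute $\pP(\widetilde I=\wi)$ by conditioning on the uniformly chosen down-move leaf $I=i$ and summing the probabilities of the swap events $E_{i,\wi}$ over $i\le\wi$. Concretely, $\pP(\widetilde I=\wi)=\frac1n\sum_{i=1}^{\wi}\pP(E_{i,\wi})$, where $E_{i,\wi}$ is the event from Lemma \ref{lem:downinv}, that leaf $i$ has $\max(i,a,b)=\wi$ with $a,b$ the least labels in the first two subtrees on its ancestral path in $T_n\sim q_{n,\alpha}$. So the task reduces to evaluating each $\pP(E_{i,\wi})$ in the alpha growth process.

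First I would exploit the structure already identified in the proof of Lemma \ref{lem:downinv}: $E_{i,\wi}$ is an event measurable with respect to the growth steps at times $\wi,\wi+1,\dots,n$ only (it is independent of $T_{\wi-1}$), and it decomposes into constraints on which of a fixed internal/external pair of edges each of leaves $\wi,\wi+1,\dots,n$ attaches to. For the case $i=\wi$: $E_{\wi,\wi}$ says none of leaves $\wi+1,\dots,n$ attaches to either of the two designated edges on the ancestral path of leaf $\wi$ in $T_\wi$; the weights of those two edges (one internal of weight $\alpha$, one external of weight $1-\alpha$) stay constant relative to the total as higher leaves are added, so at step $m$ (adding leaf $m$, for $\wi<m\le n$) the chance of avoiding both designated edges is $(2m-2-\alpha-(1-\alpha))/(2m-2-\alpha)=(m-2)/\cdots$ — I would track the exact normalizing constant $2m-3$ or $2(m-1)-\alpha$ carefully — and the product telescopes. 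For the case $i<\wi$: leaf $\wi$ must attach to one of the two designated edges of leaf $i$'s path (contributing a factor $\big(\alpha+(1-\alpha)\big)/(\text{total at step }\wi)=1/(2\wi-3)$ or similar), and then leaves $\wi+1,\dots,n$ must avoid the (now slightly grown) designated pair, giving another telescoping product. Summing the $i<\wi$ contributions over $i=1,\dots,\wi-1$ introduces the factor $\wi-1$ that produces the $2\wi-2-\alpha$ numerator.

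The main obstacle I anticipate is bookkeeping the edge-weight normalization through the growth steps: after leaf $\wi$ is inserted onto one of the two designated edges, that edge splits into an internal and an external edge, so the "avoid this pair" event for subsequent leaves is about a pair whose total weight may differ by $O(1)$ from the pre-insertion pair, and one must be careful whether the relevant total at step $m$ is $(2m-3)$, $(2(m-1)-\alpha)$, or $(m-1)(1-\alpha)+(m-2)\alpha+\cdots$ depending on how many internal vs.\ external edges $T_{m-1}$ has (which is $m-1$ internal and $m-1$ external for $m-1\ge 2$ leaves, total weight $(m-1)(1-\alpha)+(m-2)\alpha$ — I'd pin this down against \eqref{eq:enumtree} and Definition \ref{defn:ford} at the start). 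Once the normalizations are fixed, each factor in the two products is a simple ratio, the products telescope to something like $\frac{1}{(n-1-\alpha)}$ up to constants, and collecting terms gives the stated two-case formula; the $\wi=2$ case is the degenerate one where $b=0$ and there is no $i<\wi$ term with a genuine second subtree, so only the single term $i=\wi=2$ survives, yielding $\frac{1}{n(n-1-\alpha)}$. Finally, setting $\alpha=1/2$ specializes to the uniform chain, as claimed.
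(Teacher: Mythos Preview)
Your approach is essentially the same as the paper's: decompose $\pP(\widetilde I=\wi)=\frac{1}{n}\sum_{i\le\wi}\pP(E_{i,\wi})$ and compute each $\pP(E_{i,\wi})$ via the alpha growth process, using that the ``forbidden pair'' of edges on leaf $i$'s ancestral path is always one internal edge (weight $\alpha$) and one external edge (weight $1-\alpha$), so that the avoidance probability at each step telescopes.

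Two small corrections. First, your worry about the pair's total weight changing after leaf $\wi$ is inserted is unfounded: whichever of the two edges $\wi$ lands on, the first two edges on leaf $i$'s ancestral path in $T_\wi$ are still exactly one external edge $\{i\}$ and one internal edge (its new parent), with total weight $\alpha+(1-\alpha)=1$. This is what makes the product $\prod_{j=\wi}^{n-1}\big(1-\frac{1}{j-\alpha}\big)$ telescope cleanly to $\frac{\wi-1-\alpha}{n-1-\alpha}$. Second, your handling of $\wi=2$ is not quite right: it is \emph{not} true that only the term $i=\wi=2$ survives; the event $E_{1,2}$ (leaf $1$'s parent is the root) also contributes. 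The paper sidesteps this by computing $\pP(\widetilde I=2)$ as $1-\sum_{\wi=3}^n\pP(\widetilde I=\wi)$, which is the cleaner route here.
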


\begin{proof} Consider the event $E_{i,\wi}$ from Lemma \ref{lem:downinv}. First consider the special case of $\alpha=1/2$ which is simpler. Assume $\wi \ge 3$ and $i < \wi$. Then, it follows from alternative expression (i) of $E_{i, \wi}$ that%, under $q_{n,\alpha}$,  
\[
 \pP\left(  E_{i,\wi} \right)= \frac{2}{2\wi-3} \prod_{j=\wi}^{n-1} \left(1 - \frac{2}{2j-1} \right)=\frac{2}{2\wi-3} \prod_{j=\wi}^{n-1} \left(\frac{2j-3}{2j-1} \right)=\frac{2}{2n-3}.
\]
Similarly, for $i=\wi$, $\displaystyle
\pP\left(  E_{\wi,\wi} \right)= \prod_{j=\wi}^{n-1} \left(1 - \frac{2}{2j-1} \right)=\prod_{j=\wi}^{n-1} \left(\frac{2j-3}{2j-1} \right)=\frac{2\wi-3}{2n-3}$.

Let $I$ be the leaf chosen uniformly in the down-move, independent of $T_n(0)$. Denote by $\widetilde{I}$ the random leaf label that gets removed in the down-step when $I$ is 
selected. Integrating over the choice of $\{I=i\}$, $i\in [n]$, we get
\[
 \pP\left( \widetilde{I}=\wi  \right)= \frac{1}{n}\sum_{i=1}^{\wi} \pP\left( E_{i,\wi}  \right)=\frac{1}{n}\left[ \frac{2(\wi-1)}{2n-3} + \frac{2\wi-3}{2n-3}  \right]=\frac{4\wi-5}{n(2n-3)}, \quad \wi\ge 3.
\]
Then, when $\wi=2$, $\displaystyle\pP\left( \widetilde{I}=2  \right)= 1-\sum_{j=3}^n\pP\left(\widetilde{I}=j\right)= \frac{2}{n(2n-3)}$.

For any other $\alpha\in(0,1)$, the proof is very similar. On the event $E_{i,\wi}$, new leaves are forbidden to get attached to one internal and one external edge. The probability that leaf $j+1$ gets attached to any given internal edge is $\alpha/(j-\alpha)$ and the probability that it gets attached to a given external edge is $(1-\alpha)/(j-\alpha)$. Thus, for $\wi\ge 3$,
\[
\pP\left(  E_{i,\wi} \right)= \begin{dcases}
\frac{1}{\wi-1-\alpha}\prod_{j=\wi}^{n-1}\left( 1 - \frac{1}{j-\alpha}\right)=\frac{1}{n-1-\alpha}, & \text{for $1\le i < \wi$},\\
\prod_{j=\wi}^{n-1}\left( 1 - \frac{1}{j-\alpha}\right)=\frac{\wi-1-\alpha}{n-1-\alpha}, & \text{for $\wi=i$}. 
\end{dcases}
\]
Summing up, 
\[
\pP(\widetilde{I}=\wi)=\frac{1}{n}\left[ \frac{\wi-1}{n-1-\alpha} + \frac{\wi-1-\alpha}{n-1-\alpha}  \right]=\frac{2\wi-2-\alpha}{n(n-1-\alpha)}, \quad \wi\ge 3,
\]
and, as above, $\pP(\widetilde{I}=2)=1/n(n-1-\alpha)$. This completes the proof.
\end{proof}

\section{Collapsing a tree and decorating edges with masses}\label{sec:decoration} 

%\begin{figure}[t]
%\includegraphics[scale=0.7]{decorated_tree.pdf}
%\caption{A binary tree $\ft$ with $n=14$ labeled leaves is projected on the subtree spanned by the first $k=5$ leaves. The middle figure is that of the collapsed tree $\ft^\star$ that retains the higher leaf labels. The right-most figure is the decorated tree that removes leaves with labels more than $k$ and simply counts their numbers.}
%\label{fig:decorated_collapsed}
%\end{figure}

\begin{defn}[Decorated $k$-trees of mass $n$]\label{defn:edgemass}
Fix integers $n \ge k\ge 1$. For $\fs\in\bT_{[k]}$, denote by $\cN_n^\fs$ the set of functions $f\colon\fs \rightarrow (\bN\cup\{0\})$ that satisfy two conditions
\begin{enumerate}[(i)]
\item $\sum_{B\in \fs} f(B)=n$ and
\item $f\left( \{ j\} \right) \ge 1$ for each $j \in [k]$.
\end{enumerate}
Then the set of \em $k$-trees decorated with total edge mass $n$, \em or \em decorated $k$-trees \em for short, is given by 
\[
\bT_{[k]}^{\bullet(n)}:=\bigcup_{\fs\in\bT_{[k]}}  \{\fs\} \times \cN_n^\fs.
\]
We refer to an element $\ft^\bullet=\left( \fs, (x_1,\ldots,x_k, (y_B)_{B\in\fs\colon\#B\ge 2})\right)\in \{\fs\} \times \cN_n^\fs$
as a tree $\fs$ with integer masses attached to every edge. The labeled external edges have positive masses $x_1, \ldots, x_k$. Each internal edge $B$ has a nonnegative mass $y_B$. We say that $\ft^\bullet$ has \em tree shape \em $\fs$.
\end{defn}

A natural way to obtain decorated $k$-trees is to project the leaf mass from an $n$-tree to the subtree spanned by the first $k$ leaves. We describe this concept formally. 

Let $\ft\in\bT_{[n]}$ and $k\le n$. We will associate a decorated tree $\ft^\bullet\in\bT_{[k]}^{\bullet(n)}$, as follows. Define $\fs:=\ft\cap[k]$. For every $j \in [n]$, consider the ancestral line of the leaf $j$, $\sparent{j}=B_1^{(j)} \subseteq B_2^{(j)} \subseteq \cdots \subseteq B_{m_j}^{(j)}=[n]$. Additionally, let $B_0^{(j)}=\{j\}$. In this ordering let $B_j$ be the most recent ancestor $B_i^{(j)}$ of $\{j\}$, including $\{j\}$ itself, such that $B_i^{(j)} \cap [k] \neq \varnothing$. Let $\mu$ be the measure on $\ft$, of total mass $n$, that assigns mass 1 to each of the $n$ leaves. Consider the projection maps $\pi\colon[n]\rightarrow\fs$ given by $j \mapsto B_j\cap [k]$. The push-forward of $\mu$ by this projection assigns a nonnegative integer value to every edge of $\fs$ producing a decorated $k$-tree $\ft^\bullet$ as in Definition \ref{defn:edgemass}. Since $B_j=\{j\}$ for each $j\in[k]$, the projected mass on each leaf of $\fs$ is at least one. The total mass is $n$. We denote the map that assigns the decorated $k$-tree $\ft^\bullet\in\bT_{[k]}^{\bullet(n)}$ to the tree $\ft\in\bT_{[n]}$ by $\rho^{\bullet(n)}_k\colon\bT_{[n]}\rightarrow\bT_{[k]}^{\bullet(n)}$,  $\ft\mapsto\rho_k^{\bullet(n)}(\ft)=\ft^\bullet$. 

For later use we also introduce an intermediate stage between the full $n$-tree and the decorated $k$-tree with mass $n$ obtained from it. 

\begin{figure}[t]
\begin{tikzpicture}[scale=0.40, grow=up]
\node at (0,-0.8) [circle] {$\parent{B}$};
\tikzstyle{level 1}=[sibling distance=30 mm]
 \tikzstyle{level 2}=[sibling distance =35 mm]
 \tikzstyle{level 3}=[sibling distance=25 mm]
	\coordinate
		child {node {1}}
		child {[fill] circle (2pt) edge from parent [red]
			child {[fill] circle (2pt) edge from parent [black]
				child{ node {8} }
				child{ node {9} }
				}
			child {[fill] circle (2pt) 
				child{node[black] {5} edge from parent [black]} 
				child{ [fill] circle (2pt) 
					child {node[black] {3} edge from parent [black]} 
					child {node[black] {2} edge from parent [black]}
					node [black, right] {B}
					}
				}
		};	
\end{tikzpicture}\hspace{1cm}
\begin{tikzpicture}[scale=0.40, grow=up]
\tikzstyle{level 1}=[sibling distance=30 mm]
 \tikzstyle{level 2}=[sibling distance =35 mm]
 \tikzstyle{level 3}=[sibling distance=25 mm]
	\node [shape=circle] {$\varnothing$}
		child {[fill] circle (2pt) edge from parent [red]
			child {[fill] circle (2pt) edge from parent [black]
				child{ node {8} }
				child{ node {9} }
				}
			child {[fill] circle (2pt) 
				child{node[black] {5} edge from parent [black]} 
				child{ node[black] {1}	}
				}
		};
\end{tikzpicture}\hspace{1cm}			
\begin{tikzpicture}[scale=0.40, grow=up]
\tikzstyle{level 1}=[sibling distance=30 mm]
 \tikzstyle{level 2}=[sibling distance =35 mm]
 \tikzstyle{level 3}=[sibling distance=25 mm]
	\node [shape=circle] {$\varnothing$}
		child {[fill] circle (2pt) edge from parent [red]
			child {[fill] circle (2pt) edge from parent [black]
				child{ node {3} }
				child{ node {4} }
				}
			child {[fill] circle (2pt) 
				child{node[black] {2} edge from parent [black]} 
				child{ node[black] {1}	}
				}
		};			
\end{tikzpicture}
\caption{
On the left, consider the subtree generated by leaves $[3]$. The three red edges contract to an internal edge $B$ in the $3$-tree. The unranked internal structure $\fv_B$ is shown in the middle. The vertex $\stackrel{\leftarrow}{B}$ is replaced by the root, while $B$ itself is replaced by a leaf labeled $1$. The internal structure $\int(\fv_B)$ is on the right.}
\label{fig:internal_structure}
\end{figure}
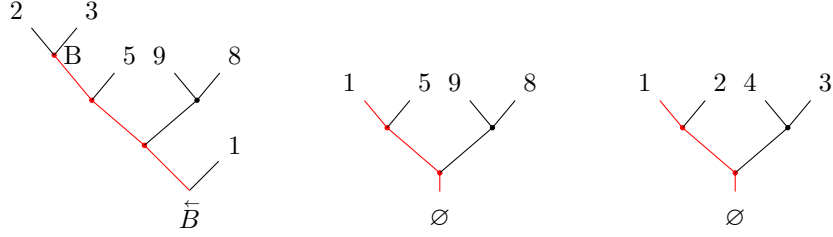

\begin{defn}[Collapsed $n$-trees with $k$ leaves]\label{defn:collapse} 
Consider an $n$-tree $\ft$ and a decorated $k$-tree $\ft^\bullet$ obtained by the projection map $\pi:[n] \rightarrow \fs=\ft \cap [k]$. Let $\pi^{-1}$ be the pre-image function of $\pi$. That is, for every edge $B$ of $\fs$, we associate a subset of $[n]$ given by $\pi^{-1}(B)$. Thus $\mu(B)= \# \pi^{-1}(B)$ is the number of elements in $\pi^{-1}(B)$. The resulting object 
\[
\ft^\star:=\left( \fs, \left(  \pi^{-1}(B)   \right)_{B\in \fs}  \right)
\]
will be called a \emph{collapsed $n$-tree with $k$ leaves} or just a \emph{collapsed $n$-tree}. Denote the set of all collapsed $n$-trees with $k$ leaves (as $\ft$ varies over $\tshape{n}$) by $\bT_{[k]}^{\star(n)}$. The map from $\tshape{n}$ to $\bT_{[k]}^{\star(n)}$ that takes $\ft$ to $\ft^\star$ will be denoted by $\rho_k^{\star(n)}$. That is, $\ft^\star= \rho_k^{\star(n)}\left( \ft \right)$. 
\end{defn}

See Figure \ref{fig:decorated_collapsed} for examples of decorated and collapsed trees obtained by projecting a tree with $n=14$ leaves on the subtree spanned by the first $k=5$ leaves. 

%The aim of this section is to show that the down-up chains on $n$-trees naturally induce a Markov chain on $\bT_{[k]}^{\bullet(n)}$ with interesting properties such as consistency in stationarity over $k$. 

%For any element $\ft \in \tshape{A}$  with $\#A=k$ consider the tree in $\tshape{k}$ obtained by relabeling each leaf $a_i$ by $i$. This resulting element in $\tshape{k}$ will be referred to as the internal structure of $\ft$ and will be denoted by $\int(\ft)$.  

%Given $k\le n \in \NN$, consider the collapsed tree $\ft^\star=\rho^{\star(n)}\left( \ft \right)=\left( \fs, \left(  \pi^{-1}(B)   \right)_{B\in \fs}  \right)$, for some $\ft \in \tshape{n}$. 

Fix $1\le k < n$ and let $\ft^\star=\left( \fs, \left( \pi^{-1}(B)  \right)_{B\in \fs}  \right) \in \bT_{[k]}^{\star(n)}$ be obtained by collapsing $\ft\in \tshape{n}$. The collection of subsets $\left( \pi^{-1}(B)  \right)_{B\in \fs}$ is an ordered partition of $[n]$, ordered by the edges of $\fs$. Fix an edge $B$ of $\fs$. Let $\fr$ be the subtree of $\ft$ spanned by the leaves $1,2,\ldots, k$, including any degree-two vertices. Label the edges of $\fr$ by their corresponding labels in $\ft$. Consider the set of edges $S$ of $\fr$ such that $S\cap [k]=B$ and consider the vertices corresponding to those edges. Then, there is a sequence of subtrees of $\ft$ with leaf labels in $\pi^{-1}(B)$ that are rooted at those vertices. Along with the edges $S$ of $\fr$ such that $S\cap[k]=B$, this gives us a binary tree $\fv_B$ rooted at the vertex corresponding to $\parent{B}$, the parent of $B$ in $\fs$. Note that, if $B$ is an internal edge of $\fs$, the vertex corresponding to $B$ becomes a new leaf of $\fv_B$. We will always label this leaf by $1$ in $\fv_B$. Other leaves of $\fv_B$ retain their labels from $\ft$, including the case when $B$ is itself an external edge in $\fs$. In particular, the leaf set of $\fv_B$ is $\pi^{-1}(B)$ if $B$ is external in $\fs$ and $\pi^{-1}(B)\cup \{1\}$, if $B$ is internal. These labelings are local in the sense that leaves labeled $1$ appear in $\fv_B$ for all internal edges $B$ in $\fs$. See Figure \ref{fig:internal_structure} for an example where $B$ is internal.

\begin{defn}[Internal structure]\label{defn:internal_structure}
Let $A=\{a_1,\ldots,a_k\}\subseteq [n]$, $k\le n$, where the elements are arranged in increasing order $a_1 < a_2 < \cdots < a_k$. Then we associate with any tree in $\bT_A$ the tree in $\bT_{[k]}$ obtained by relabeling each leaf $a_i$ by its rank $i$. In the case where we apply this relabeling to $\fv_B$ for $\ft\in\bT_n$, $B\in\ft\cap[k]$, we use the function notation $\int(\fv_B)\in\bT_{[\#\pi^{-1}(B)]}\cup\bT_{[\#\pi^{-1}(B)+1]}$. We will refer to this tree as the \textit{internal structure} of $\rho_k^{\bullet(n)}(\ft)$ supported on $B$, while $\fv_B$ itself will be called the \emph{unranked internal structure} of $\rho_k^{\bullet(n)}(\ft)$ supported on $B$.
\end{defn}

%%%%%%%%%%%%%%%%%%%%%%%%%%%%%%%%%%%%%%%

We write $\widetilde{q}_{n,\alpha}$ for the law of the $n$-leaf tree formed by Ford growth, as in Definition \ref{defn:ford}, 
with the exception that new leaves select to grow on edge $\{1\}$, specifically, with probability proportional to $\alpha$, rather than $1-\alpha$. % To be clear: the new leaf still grows on any other external edge $\{j\}$, $j\neq 1$, with probability proportional to $1-\alpha$.
Note that $\widetilde{q}_{n,\alpha} = q_{n,\alpha}$ for $\alpha=\frac12$.

%Fix $\ft^\star\in \bT_{[k]}^{\star(n)}$. Let $T_n \sim q_{n,1/2}$ and let $\Lambda(\ft^\star, \cdot)$ denote the conditional probability distribution of $T_n$ given $\rho_k^{\star(n)}\left( T_n \right)=\ft^\star$. Then $\Lambda(\ft^\star, \cdot)$ can be described by the following procedure. 

\begin{lemma}[Spatial Markov property of $q_{n,\alpha}$]\label{lem:spatialMarkov}
 Fix $\alpha\in [0,1]$. If $T_n \sim q_{n,\alpha}$ conditioned on $\rho_k^{\bullet(n)}\left( T_n \right) = \left( \fs, \left( \mu(B)  \right)_{B\in \fs}  \right) $, then the random internal structures $\left( \int\left(V_B\right),\; B\in \fs   \right)$ 
 of $\rho_k^{\bullet(n)}(T_n)$ are mutually independent, jointly independent of $\left( \pi^{-1}(B)  \right)_{B\in \fs}$, and each $\int\left(V_B\right)$ is distributed according to $q_{\mu(B), \alpha}$ if $B$ is an external edge, or $\widetilde{q}_{\mu(B)+1, \alpha}$ otherwise. In particular, these structures remain independent when conditioned on $\rho^{\star(n)}_k(T_n) = \ft^\star = \big( \fs, \left( \pi^{-1}(B) \right)_{B\in \fs}  \big)$. 
\end{lemma}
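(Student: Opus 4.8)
The strategy is to prove the statement by induction on $n$, exploiting the sequential structure of the alpha growth process: $T_n$ is built from $T_{n-1}$ by attaching leaf $n$ on a random edge $S_{n-1}$ chosen proportionally to the Ford weights. The key observation is that attaching leaf $n$ only affects one of the unranked internal structures $V_B$ — namely the one supported on the edge $B^* \in \fs$ with $n \in \pi^{-1}(B^*)$ (or, if leaf $n$ falls on the spine between two existing leaves of $[k]$, it lands in exactly one block $\pi^{-1}(B^*)$). So conditionally on where leaf $n$ lands, all the other $V_B$ are unchanged, and we need only track how the affected structure and the ordered partition evolve.

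First I would set up notation: condition on $\rho_k^{\bullet(n-1)}(T_{n-1})$ and apply the inductive hypothesis, so the internal structures $(V_B^{(n-1)}, B \in \fs)$ of $T_{n-1}$ are independent, independent of $(\pi^{-1}(B))_{B \in \fs}$ within $[n-1]$, with the stated marginal laws ($q_{\mu,\alpha}$ or $\widetilde{q}_{\mu+1,\alpha}$). Then I would analyze the up-move. The edge $S_{n-1}$ of $T_{n-1}$ on which leaf $n$ is attached can be split according to which block $\pi^{-1}(B)$ of the partition it belongs to; a short computation shows that, given the masses $(\mu(B))_{B \in \fs}$ and the structures, the probability that leaf $n$ lands in block $B$ is governed by the total Ford weight carried by the edges of $T_{n-1}$ in that block, and that total weight is a function of $\mu(B)$ and whether $B$ is internal or external only — this is where the factor of $\alpha$ versus $1-\alpha$ on the edge $\{1\}$ in $\widetilde{q}$ comes from, since the "phantom" leaf $1$ of $V_B$ (the collapsed vertex for an internal edge) contributes like an internal edge of the ambient tree, not an external one. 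Conditionally on leaf $n$ landing in block $B^*$, the new structure $V_{B^*}^{(n)}$ is obtained from $V_{B^*}^{(n-1)}$ by attaching a leaf via the alpha growth rule (with the modified rule on edge $\{1\}$), so it has the correct law $q_{\mu(B^*)+1,\alpha}$ or $\widetilde{q}_{\mu(B^*)+2,\alpha}$, and this step is independent of the other structures and of the partition data.

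The remaining work is bookkeeping: one checks that the joint law of $\big((\pi^{-1}(B))_{B \in \fs}, (V_B)_{B \in \fs}\big)$ after attaching leaf $n$ factorizes in the claimed way. This amounts to verifying that (a) the choice of which block receives leaf $n$ depends only on the masses $(\mu(B))$, not on the detailed structures or the detailed partition, and (b) given that choice, the update to the relevant structure is conditionally independent of everything else — both of which follow from the explicit Ford attachment weights. Finally, the "in particular" clause is immediate: conditioning further on $\rho_k^{\star(n)}(T_n) = \ft^\star$ refines the conditioning only through the partition $(\pi^{-1}(B))_{B\in\fs}$, on which the structures were already independent, so independence of $(\int(V_B))_{B\in\fs}$ persists, and the relabeling $V_B \mapsto \int(V_B)$ is a deterministic bijection preserving the growth law (the ranks of the labels in each block are determined by the block, and Ford growth is invariant under relabeling leaves by rank).

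**Main obstacle.** The delicate point is the treatment of the collapsed vertex (the leaf labeled $1$) in $V_B$ for internal edges $B$: one must check that this phantom leaf behaves, for the purposes of the Ford weighting of $T_{n-1}$, exactly like a subtree hanging off an internal edge — equivalently, that the edge of $T_{n-1}$ corresponding to the leaf-$1$ edge of $V_B$ carries Ford weight $\alpha$, not $1-\alpha$ — which is precisely the distinction encoded by $\widetilde{q}$. Getting this accounting exactly right, and confirming that the induction base case and the "spine" case (leaf $n$ inserted at a degree-two vertex on the path between leaves of $[k]$) are handled consistently, is where care is required; the rest is routine.
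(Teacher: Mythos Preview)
Your proposal is correct and follows essentially the same approach as the paper: both argue via the sequential alpha growth process, observing that each new leaf lands in exactly one block $\pi^{-1}(B)$, that conditionally on this choice the attachment within $V_B$ follows the Ford rule (with the $\alpha$ weight on the phantom leaf-$1$ edge for internal $B$, yielding $\widetilde q$), and that this is independent of the other structures and of the partition labels. The paper's proof is simply a terser rendering of the same idea, omitting the explicit induction scaffolding and bookkeeping that you spell out.
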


\begin{proof}
Consider Ford's alpha growth process starting from $\fs$. It is clear from the definition that, given that a label $i$ is in $\pi^{-1}(B)$ for some $B \in \fs$, the location where it grows in $V_B \cap [i-1]$ is independent of the internal structures of all other subtrees growing on other edges. Moreover, the probability that $i$ will grow out of a given edge in $V_B \cap [i-1]$ is as in Definition \ref{defn:ford} of Ford growth, with the exception that if $B$ is internal, then the external edge $\{1\}$ in $V_B$, which corresponds to an internal edge of $T_n\cap [i-1]$, is selected with probability proportional to $\alpha$, instead of $1-\alpha$. Therefore, given $\ft^\star$, each $\int\left(V_B\right)$ is distributed as $q_{\mu(\{i\}),\alpha}$ if $B=\{i\}$ is external or as $\widetilde{q}_{\mu(B)+1, \alpha}$ if $B$ is internal, and these internal structures are jointly independent of one another. Since, conditionally given $\left(\mu(B),B\in\fs\right)$, the joint distribution of the internal structures is independent of the leaf labels $\left( \pi^{-1}(B),\; B\in \fs \right)$, the statement of the lemma follows.
\end{proof}

\begin{lemma}[Regenerative spinal compositions]\label{lem:topbeadkernel}
 Fix $\alpha\in (0,1)$ and consider $T_n\sim\widetilde{q}_{n,\alpha}$. Let $U$ denote the first subtree along the spine from $\{1\}$ to the root and $M_1$ the number of leaves in $U$. Then
 \begin{equation}\label{eq:returnalpha}
  \pP(M_1 = m) = \delta_{\alpha}(n:m) := \alpha \combi{n}{m} \frac{\Gamma(m-\alpha)\Gamma(n-m+\alpha)}{\Gamma(1-\alpha)\Gamma(n+\alpha)},\quad m\in [n].
 \end{equation}
 Moreover, if $V$ is $T_n$ with $U$ removed but leaf $1$ retained, then $\int(U)$ is conditionally independent of $\int(V)$ given $M_1$, with $\int(U)\sim q_{M_1,\alpha}$ and $\int(V)\sim \widetilde{q}_{n-M_1,\alpha}$.
\end{lemma}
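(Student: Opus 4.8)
The plan is to analyze the growth process directly, tracking the subtree $U$ attached along the spine from $\{1\}$ to the root. Consider $T_n \sim \widetilde{q}_{n,\alpha}$, which is Ford's $\alpha$-growth with the modification that the edge $\{1\}$ receives weight proportional to $\alpha$ (like an internal edge) rather than $1-\alpha$. First I would set up notation: the spine from $\{1\}$ to the root has the edge $\{1\}$ at the bottom; the first subtree $U$ is rooted at the parent vertex $\sparent{\{1\}}$ and hangs off the spine; $M_1 = \#U$ is its leaf count, and $V$ is the complementary tree (everything with $U$ deleted but leaf $1$ retained), which has $n - M_1$ leaves.

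The key step is to compute $\pP(M_1 = m)$ by following the insertion order. The subtree $U$ is ``born'' at the first time some leaf $j$ is inserted as a sibling of leaf $1$ (i.e.\ onto edge $\{1\}$), because that creates the branch point $\sparent{\{1\}}$ and all leaves subsequently landing in that clade contribute to $U$. So I would argue: leaf $2$ attaches onto $\{1\}$ (under $\widetilde{q}$, with $n=2$ there is only one edge anyway); more carefully, at step $j$ (inserting leaf $j$ into a tree with $j-1$ leaves), the relevant edges near leaf $1$ are the external edge $\{1\}$ with weight $\alpha$ and, once $U$ exists, the edge leading to $U$'s root, which is internal with weight $\alpha$, plus the rest. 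The cleanest route: think of the three-way split at each insertion into ``lands in $U$,'' ``lands on edge $\{1\}$ creating/augmenting... '' Actually the standard device here is a Pólya-urn / Chinese-restaurant computation. Let me instead track, for each $j \ge 2$, whether leaf $j$ is placed in the clade above $\sparent{\{1\}}$ (contributing to $U$) or not. One shows that this is exactly a Pólya-type sequence: conditionally on the first $j-1$ placements having produced a $U$ with $a$ leaves (so the clade above $\sparent{\{1\}}$ has $a$ leaves, and edge $\{1\}$ plus the spine carries the rest), leaf $j$ joins $U$ with probability $(a - \alpha)/(j - \alpha)$ and stays outside with probability $(j - 1 - a + \alpha)/(j-\alpha)$ — here the numerator $a - \alpha$ comes from summing edge weights inside $U$ (which by the edge-count identity for binary trees equals $a$ internal-or-external edges weighted to give total $a-\alpha$... ) and the ``outside'' weight $(j-1-a) + \alpha$ includes the $\alpha$-weighted edge $\{1\}$. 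This is precisely the sequential description of a single block in a Chinese restaurant process with parameter $\alpha$, which yields $\pP(M_1 = m) = \alpha \binom{n}{m}\frac{\Gamma(m-\alpha)\Gamma(n-m+\alpha)}{\Gamma(1-\alpha)\Gamma(n+\alpha)}$; equivalently one checks the claimed $\delta_\alpha(n:m)$ satisfies the obvious recursion in $n$ and sums to $1$.

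For the conditional independence and the laws of $\int(U)$ and $\int(V)$, I would condition on $M_1 = m$ and on the entire sequence of which leaves landed in $U$ versus outside. Given this, the internal growth of $U$ is governed by its own sub-sequence of insertions: the first leaf to land in $U$ roots it, and thereafter leaves attach by the ordinary $\alpha$-growth rule restricted to $U$'s edges — no $\{1\}$-type exception applies inside $U$, so after relabeling by rank, $\int(U) \sim q_{m,\alpha}$. Symmetrically, $V$ (with leaf $1$ retained and its spine) grows by the $\widetilde{q}$-rule on $n - m$ leaves: the edge $\{1\}$ still gets the anomalous weight $\alpha$, and the edge leading up to where $U$ was attached is a genuine internal edge of $V$, so $\int(V) \sim \widetilde{q}_{n-m,\alpha}$. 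Independence of $\int(U)$ and $\int(V)$ given $M_1$ follows because, conditionally on the labelled partition of leaves into ``$U$'' and ``outside,'' the two growth processes use disjoint randomness, and then one averages over the partition (which only relabels, not reshapes). This mirrors exactly the argument already used in Lemma \ref{lem:spatialMarkov}.

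The main obstacle I anticipate is justifying the urn transition probability $(a-\alpha)/(j-\alpha)$ rigorously, i.e.\ correctly accounting for edge weights: one must verify that the total weight of edges lying strictly above $\sparent{\{1\}}$ in the current $(j-1)$-leaf tree equals $a - \alpha$ when that clade has $a \ge 1$ leaves (using that a binary tree with $a$ leaves has $a - 1$ internal edges plus $a$ external edges, weighted $\alpha$ and $1-\alpha$, but with the subtlety of which edge counts as the ``attachment edge'' to the spine and whether it belongs to $U$ or is the spinal edge below $\sparent{\{1\}}$), and that the complementary weight is $(j - 1 - a) + \alpha$ including the anomalous $\{1\}$-edge — bookkeeping that is easy to get off by an $\alpha$. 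Once that identity is pinned down, the rest is the standard CRP computation and a conditioning argument parallel to Lemma \ref{lem:spatialMarkov}.
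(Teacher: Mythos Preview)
Your two-color urn tracks the wrong subtree. You write that $U$ is born the first time some leaf is inserted onto edge $\{1\}$ and that all leaves subsequently landing in that clade contribute to $U$. But $U$ is the \emph{current} sibling subtree of leaf $1$: every time a later leaf attaches to edge $\{1\}$, a new branch point $\sparent{\{1\}}$ is created, the new singleton becomes the first spinal subtree, and the old $U$ is pushed to second position along the spine. The mass of the first spinal subtree is therefore not monotone in the growth process and does not admit your P\'olya description $(a-\alpha)/(\cdot)$; it resets to $1$ whenever edge $\{1\}$ is selected. What your urn actually computes is the mass of the spinal subtree containing leaf $2$, and that block does \emph{not} have law $\delta_\alpha(n:\cdot)$ in general (one can check the two distributions already differ with three customers in the CRP).

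The paper avoids this by tracking the \emph{entire} sequence of spinal subtree masses: at each growth step the new leaf joins the $j$th spinal subtree (current mass $m_j$) with probability proportional to $m_j-\alpha$, or starts a new spinal subtree at any of the $k+1$ gaps along the spine (including the top gap, adjacent to leaf $1$) with probability proportional to $\alpha$. This is exactly the seating rule of an ordered $(\alpha,\alpha)$ Chinese restaurant process, so the spinal composition is $(\alpha,\alpha)$-regenerative in the sense of Gnedin--Pitman, and the law of its first block is the decrement matrix $\delta_\alpha(n:\cdot)$, quoted from that reference. The regenerative property also delivers the second assertion cleanly: conditionally on $M_1$, the remaining spinal composition (that of $V$) is again $(\alpha,\alpha)$-regenerative and independent of $U$, which together with the spatial-Markov argument of Lemma~\ref{lem:spatialMarkov} yields $\int(V)\sim\widetilde{q}_{n-M_1,\alpha}$ independent of $\int(U)\sim q_{M_1,\alpha}$. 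Your treatment of $\int(U)$ and the conditional independence is fine in spirit, but your claim that $V$ grows by the $\widetilde{q}$-rule needs precisely this regenerative property of the full composition, not a two-block split.
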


We remark that, when $\alpha=\frac12$, \eqref{eq:returnalpha} is also the probability that the first return to the origin of a simple random walk bridge of length $2n$ occurs at time $2m$:
 \begin{equation}\label{eq:return}
  \delta(n:m) := \delta_{1/2}(n:m) = \frac{1}{2m-1} \combi{2m}{m} \combi{2n-2m}{n-m}\Big/\combi{2n}{n}, \quad m\in [n].
 \end{equation}

\begin{proof}
 Think of $T_n$ as growing according to Ford's alpha growth, with the modification noted above Lemma \ref{lem:spatialMarkov}, and consider how subtrees grow along the spine to leaf 1. At each step, prior to adding the new leaf, suppose the spinal subtrees have masses $m_1,\ldots,m_k$. The new leaf attaches to one of the $j^{\text{th}}$ subtree with probability proportional to $m_j-\alpha$, or begins a new spinal subtree with probability proportional to $\alpha$ at any of the $k+1$ possible sites between adjacent spinal subtrees or at the top or bottom of the spine. This is the growth rule for an $(\alpha,\alpha)$ ordered Chinese restaurant process \cite{PW09}. As noted in \cite{PW09}, this means that after $\mu(B)$ growth steps, the sequence of subtree masses going down the spine is an $(\alpha,\alpha)$ regenerative composition structure, as in \cite[Section 8.4]{gp}. Therefore, the mass of the first such subtree is distributed according to the \emph{decrement matrix}, which we have quoted from \cite{gp} in \eqref{eq:returnalpha}. This proves the first assertion.
 
 By the same argument as in the proof of Lemma \ref{lem:spatialMarkov}, the spinal subtrees, including $U$, are conditionally independent of each other given the sequence of their leaf counts $(M_i)_{i\in [K]}$, each with law $q_{M_i,\alpha}$. This gives the claimed conditional law for $U$. Moreover, by definition of regenerative composition structures, the subtree masses down the spine from leaf $1$ in $V$ also form an $(\alpha,\alpha)$ regenerative composition structure. And again, by the argument in Lemma \ref{lem:spatialMarkov}, this sequence of masses is conditionally independent of $U$ given $M_1$. This is enough information to specify the conditional law of $\int(V)$, given $U$, as $\widetilde{q}_{n-M_1,\alpha}$.
\end{proof}

\section{Down-up chains on decorated trees}

\subsection{Definitions}\label{Sect23}
We now show that both the alpha chain and the uniform chain induce Markov chains on decorated $k$-trees with total mass $n$. The idea is very general. Suppose we have a time-homogeneous Markov chain $(\mathcal{X}(j))_{j\ge 0}$, with a finite state space $\bX$. Let $P$ denote the transition matrix for this Markov chain. Let $\bY$ be another finite space and let $g\colon\bX \rightarrow \bY$ be a surjective function. For each $y \in\bY$, suppose we have a probability function $\Lambda(y, \cdot)$ on $\bX$. Then, one can define a Markov operator $S$ acting on any function $h\colon\bY\rightarrow \rr$ by
\eq\label{eq:Mkvfn}
S\left( h\right)(y) := \sum_{x\in\bX} \Lambda\left( y,x \right) \pE\left[ h\circ g\left( \mathcal{X}(1) \right) \mid \mathcal{X}(0)=x \right].
\en
We will denote the corresponding transition matrix on $\bY$ by $Q=\Lambda P g$, where we abuse notation and write $g$ for the 
transition matrix $(\mathbf{1}_{(g(x) = y)})_{x\in\bX,y\in\bY}$ that corresponds to deterministic transitions from each $x\in\bX$ to $g(x)\in\bY$.
If $(\mathcal{X}(j))_{j\ge 0}$ has a stationary distribution $q$, a natural choice for $\Lambda(y, \cdot)$ is the conditional distribution 
$q(\,\cdot\,|g=y)$. In particular, this holds when $(\mathcal{X}(j))_{j\ge 0}$ is the alpha chain (or, the uniform chain), $q$ is 
$q_{n,\alpha}$ (or, the uniform distribution) on $\tshape{n}$ and $g= \rho^{\bullet(n)}_k$. In this section we show that the resulting 
Markov transition kernel $Q$ on $\mathbb{T}^{\bullet(n)}_{[k]}$ has an autonomous description that does not refer to the chains on 
$n$-trees. This is useful if we wish to send $n$ to infinity while keeping $k$ fixed. Throughout, we will consider the parameter 
$\alpha$ restricted to the open interval $(0,1)$.

Recall the Dirichlet-multinomial (${\rm DM}$) distributions. This is a family of distributions that takes two parameters, a positive integer $m$ and a vector $v$ in $(0, \infty)^d$, for some $d\ge 2$. Given such $m$ and $v$, this is a discrete probability distribution on the set of nonnegative integers $(j_1, \ldots, j_d)$ that add up to $m$ that can be described as follows. Pick $(p_1, \ldots, p_{d})$ from a Dirichlet distribution with parameters $v$. Let $(J_1, \ldots, J_{d})$ be conditionally distributed according to a multinomial distribution of counts when sampling with replacement $m$ balls of $d$ colors, where at each step the ball of color $i$ has a probability $p_i$ of getting sampled. Then the marginal distribution of $(J_1, \ldots, J_{d})$, when integrated over $(p_1, \ldots, p_{d})$, is ${\rm DM}$ with parameters $m$ and $v$. For our purpose we take $d=2k-1$, for some $1\le k < n$, and, for some $\alpha\in (0,1)$, the first $k$ coordinates of $v$ are equal to $1-\alpha$ and the rest are equal to $\alpha$. Denote this distribution by $\DirM^\alpha_{2k-1}(m)$. 
%
%It is known that the probability mass function of $\DirM^\alpha_{2k-1}(m)$ is given by 
%\eq\label{eq:pmfdirm}
%\frac{m!\Gamma(k-\alpha)}{\Gamma(m+k-\alpha)} \prod_{i=1}^k \frac{\Gamma(j_i + 1-\alpha)}{j_i! \Gamma(1-\alpha)}  \prod_{i=k+1}^{2k-1} \frac{\Gamma(j_i + \alpha)}{j_i! \Gamma(\alpha)},\; \text{where}\; j_i \in \bN\cup \{0\},\; \sum_{i=1}^{2k-1} j_i=m.
%\en
%
We also define $\DirM^\alpha_{2k-1}(0)$ to be the probability distribution that puts mass one on the zero-vector of dimension $2k-1$.

%Another probability distribution that is useful for us is a Markov transition kernel on positive integers. 

%\begin{defn}\label{defn:topbeadkernel} For $\alpha\in (0,1)$ and $n\in \bN$, we define the following probability distribution (\cite[Section 8.4]{gp}) supported on $m\in [n]$:
%\eq\label{eq:returnalpha}
%\delta(n:m)=\alpha \combi{n}{m} \frac{\Gamma(m-\alpha)\Gamma(n-m+\alpha)}{\Gamma(1-\alpha)\Gamma(n+\alpha)}.
%\en
%When $\alpha=1/2$, the above distribution can be alternatively expressed as
%\eq\label{eq:return}
%\delta(n:m):= \frac{1}{2m-1} \frac{\combi{2m}{m} \combi{2n-2m}{n-m}}{\combi{2n}{n}}, \quad m\in [n].
%\en
%Thus, for $\alpha=1/2$, $\delta(n:m)$ is also the probability that the first return to the origin of a simple symmetric random walk bridge of length $2n$ occurs at time $2m$.
%\end{defn}

Just as in the case of $n$-trees, the induced Markov chains on decorated $k$-trees involve a down-move followed by an up-move. However, the down- and up-moves can be of different kinds depending on the decoration. The essential point is that an external edge in the 
$k$-tree is not at risk of being dropped until its mass is one and the mass of its parent is zero. We now define these special cases first. Fix $1\le k < n$ and $\alpha\in (0,1)$ throughout.
%Define dropping and inserting a label from a decorated tree $\ft^\bullet \in \bT^{\bullet(n)}_{[k]}$. 

\begin{defn}\label{defn:droplabel}\rm{(Dropping a label)}
Let $\ft^\bullet=\left( \fs, (x_1, \ldots, x_k, (y_B)_{B\in \fs,\; \# B\ge 2}) \right) \in \bT^{\bullet(n)}_{[k]}$. Suppose label $i$ is such that $x_i=1$ and $y_{\sparent{i}}=0$. Then, by \emph{dropping label $i$ from $\ft^\bullet$} we mean performing the following sequentially.
\begin{enumerate}[(i)]
\item Drop leaf $i$ from $\fs$ and relabel leaves $i+1, \ldots, k$ by $i, \ldots, k-1$, respectively, to get a tree shape $\fs^{(i)}\in \tshape{k-1}$.
\item Drop $x_i$ and $y_{\sparent{i}}$ from the list of weights in $\ft^\bullet$ and relabel the remaining weights by the edges of $\fs^{(i)}$ to obtain an element of $\bT_{[k-1]}^{\bullet(n-1)}$.
\end{enumerate}
\end{defn}

%Define a transition kernel $\iota_k(\ft^\bullet,(\ft^\bullet)^\prime)$ for $\ft^\bullet\in\bT_{[k-1]}^{\bullet(n-1)}$ to $(\ft^\bullet)^\prime\in\bT_{[k]}^{\bullet(n-1)}$ as follows. 

\begin{defn}[Inserting a label]\label{defn:insert} Suppose $\ft^\bullet=\left( \fs,\left(x_1,\ldots,x_k,(y_B)_{B\in\fs,\; \# B\ge 2}\right) \right) \in \bT_{[k-1]}^{\bullet(n-1)}$. By \emph{inserting the label $k$ in $\ft^\bullet$} we mean performing the following actions on $\ft^\bullet$ sequentially to obtain a $\bT_{[k]}^{\bullet(n-1)}$-valued random variable. 
  \begin{enumerate}[(i)]
  \item Choose an edge $B$ with probability 
  \[
  \begin{dcases}
 \frac{x_i-1}{n-k},&\quad \text{if $B=\{i\}$ is external},\\
  \frac{y_B}{n-k},& \quad \text{if $B$ is internal}. 
  \end{dcases}
  \]
  \item If an external edge $\{i\}$ has been chosen, sample three integers $(j_1^*, j_2^*, j_3^*)$ using the kernel $\DirM^\alpha_3\left(x_i-2 \right)$. Update the tree shape $\fs$ to $\fs+(\{i\},k)$, by inserting a new leaf labeled $k$ on edge $\{i\}$. Split the corresponding $x_i$ into three random integers $\left( x_i, x_{k},y_{\{i,k\}}  \right)$ where
  \[
 x_i=j^*_1+1,\quad x_{k}=j^*_2+1,\quad  y_{\{ i,k\}}=j^*_3.
  \] 
  \item If an internal edge has been chosen, sample three integers $(j_1^*, j_2^*, j_3^*)$ using the kernel $\DirM^{1-\alpha}_3\left(y_B-1 \right)$. Update the tree shape $\fs$ to $\fs+(B,k)$, by inserting a new leaf labeled $k$ on edge $B$. Split the original $y_B$ in three random integers $\left( y_{B\cup\{k\}}, y_{B}, x_{k}  \right)$ where 
  \[
  y_{B\cup \{k\}}=j^*_1, \quad y_B=j^*_2,\quad x_{k}=j^*_3+1.
  \]
  \end{enumerate}
\end{defn}

\begin{defn}[Up-move on decorated trees]\label{defn:decupmove}$\;$\\  Let $\ft^\bullet=\left( \fs,\left(x_1,\ldots,x_k,(y_B)_{B\in\fs,\; \# B\ge 2}\right) \right) \in \bT_{[k]}^{\bullet(n-1)}$. 
Choose an edge $B$ with probability
\[
\begin{dcases}
\frac{x_i-\alpha}{n-1-\alpha},&\quad \text{if $B=\{i\}$ is external},\\
\frac{y_B+ \alpha}{n-1-\alpha},& \quad \text{if $B$ is internal},
\end{dcases}
\]
and increase the mass of the chosen edge by one to obtain an element of $\bT_{[k]}^{\bullet(n)}$.
\end{defn}

For the uniform decorated chain we combine variations of dropping a label (without relabeling leaves), up-move and inserting a label (using the edge chosen for the up-move also for the label insertion), in a single resampling step. 

\begin{defn}[Resampling a label]\label{defn:resample} Let $\ft^\bullet=\left( \fs, (x_1, \ldots, x_k, (y_B)_{B\in \fs,\; \# B\ge 2}) \right) \in \bT^{\bullet(n)}_{[k]}$. Suppose label $i$ is such that $x_i=1$ and $y_{\sparent{i}}=0$. Then, by \em resampling label $i$ \em from $\ft^\bullet$ we mean the following list of actions to get a $\bT_{[k]}^{\bullet(n)}$-valued random variable.
\begin{enumerate}[(i)]
\item Drop leaf $i$ from $\fs$ to get a tree shape $\fs^{(i)}\in \bT_{[k]\backslash \{i\}}$.
\item Drop $x_i$ and $y_{\sparent{i}}$ from the list of weights in $\ft^\bullet$.
\item Choose an edge $B$ of $\fs^{(i)}$ with probability 
\[
\begin{dcases}
\frac{x_j- 1/2}{n-3/2},&\quad \text{if $B=\{j\}$ is external},\\
\frac{y_B+ 1/2}{n-3/2},& \quad \text{if $B$ is internal}.
\end{dcases}
\]
\item If an external edge $B=\{j\}$ has been chosen, sample three integers $(j_1^*, j_2^*, j_3^*)$ using the kernel $\DirM^{1/2}_3\left(x_j-1 \right)$. Increase the current $x_j$ by one and split into three new random integers $\left( x_j, x_{i},y_{\{i,j\}}  \right)$ where
  \[
 x_j=j^*_1+1,\quad x_{i}=j^*_2+1,\quad  y_{\{ i,j\}}=j^*_3.
  \]
 Update the tree shape $\fs$ to $\fs^{(i)}+(\{j\},i)$, inserting a new leaf $i$ below leaf $j$. 
  \item If an internal edge $B$ has been chosen, sample three integers $(j_1^*, j_2^*, j_3^*)$ using the kernel $\DirM^{1/2}_3\left(y_B \right)$. Increase the original $y_B$ by one and split into three new random integers $\left( y_{B\cup\{i\}}, y_{B}, x_{i}  \right)$ where 
  \[
  y_{B\cup \{i\}}=j^*_1, \quad y_B=j^*_2,\quad x_{i}=j^*_3+1.
  \]
  Update the tree shape $\fs$ to $\fs^{(i)}+(B,i)$, inserting a new leaf $i$ below vertex $B$. 
\end{enumerate}
\end{defn}

We will now define a Markov chain on the space of decorated $k$-trees $\decotree{k}{n}$ of total edge mass $n$. 

\begin{defn}[Uniform decorated chain]\label{defn:simpdownup} Fix $n\ge 3$ and $k\in[n]$. We define a Markov chain $(T_k^{\bullet(n)}(j))_{j\ge 0}$ on the state space $\bT_{[k]}^{\bullet(n)}$ with the following transition rules. 
Given $T_k^{\bullet(n)}(j)=\ft^\bullet=\left( \fs, (x_1, \ldots, x_k, (y_B)_{B\in \fs,\; \# B\ge 2}) \right)\in \bT_{[k]}^{\bullet(n)}$, $j\ge 0$, randomly construct $T_k^{\bullet(n)}(j+1)$ by first making the following random selection:  
\begin{enumerate}[(i)]
\item select the internal edge $B$ for any $B\in\fs:\#B\ge 2$ with probability $y_B/n$, or
\item select the external edge to leaf $i$ for any $i\in[k]$ with probability $x_i/n$.
\end{enumerate} 
Then, perform the following random move in the case determined by this selection.
\begin{enumerate}
\item[(A)] If some internal $B$ or an external $i$ with $x_i\ge 2$ is chosen, reduce the chosen weight by one and perform an up-move as in Definition \ref{defn:decupmove} with $\alpha=1/2$.  
\item[(B)] If some external $i$ with $x_i=1$ is chosen and $y_{\sparent{i}}>0$, sample a random $m$ from the kernel $\delta\left( y_{\sparent{i}}: \cdot \right)$ in \eqref{eq:return}. Set $x_i=m$ and reduce $y_{\sparent{i}}$ by $m$. Perform an up-move as in Definition \ref{defn:decupmove}.
\item[(C)] If some external $i$ with $x_i=1$ is chosen and $y_{\sparent{i}}=0$, consider the $k$-tree shape $\fs$ and compare $i$ with the smallest leaf labels $a$ and $b$ in the first two subtrees on the ancestral path from leaf $i$ to the root (with the convention $b=0$ if the path has only one subtree). Let $\wi=\max\{i,a,b\}$. Swap $i$ and $\wi$, if different. Resample $\wi$ as in Definition \ref{defn:resample}.
\end{enumerate}
\end{defn}

The above chain can be generalized for all $\alpha \in (0,1)$ by using the alpha chain given in Definition \ref{defn:ntreeupdown}. 

\begin{defn}[Alpha decorated chain]\label{downmove} Fix $\alpha \in (0,1)$, $n\ge 3$, $k\in[n]$. We define $(T_k^{\bullet(n)}(j))_{j\ge 0}$. 
Given $T_k^{\bullet(n)}(j)=\left( \fs, (x_1, \ldots, x_k, (y_B)_{B\in \fs,\; \# B\ge 2}) \right)\in \bT_{[k]}^{\bullet(n)}$, $j\ge 0$, randomly construct $T_k^{\bullet(n)}(j+1)$ by first making the following random selection:  
\begin{enumerate}[(i)]
\item select the internal edge $B$ for any $B\in\fs:\#B\ge 2$ with probability $y_B/n$, or
\item select the external edge to leaf $i$ for any $i\in[k]$ with probability $x_i/n$.
\end{enumerate} 
Then, perform the following random move in the case determined by this selection.
\begin{enumerate}
\item[(A)] If some internal $B$ or an external $i$ with $x_i\ge 2$ is chosen, reduce the chosen weight by one and perform an up-move as in Definition \ref{defn:decupmove}.  
\item[(B)] If some external $i$ with $x_i=1$ is chosen and $y_{\sparent{i}}>0$, sample a random $m$ from the kernel $\delta_{\alpha}\left( y_{\sparent{i}}: \cdot \right)$ in \eqref{eq:return}. Set $x_i=m$ and reduce $y_{\sparent{i}}$ by $m$. Perform an up-move as in Definition \ref{defn:decupmove}.
\item[(C)] If some external $i$ with $x_i=1$ is chosen and $y_{\sparent{i}}=0$, consider the $k$-tree shape $\fs$ 
  and compare $i$ with the smallest leaf labels $a$ and $b$ in the first two subtrees on the ancestral path from leaf $i$ to the root 
  ($b=0$ if the path has only one subtree). Let $\wi=\max\{i,a,b\}$. Swap $i$ and $\wi$, if different. Drop $\wi$ from $\ft^\bullet$ as 
  in Definition \ref{defn:droplabel} to get an element in $\bT_{[k-1]}^{\bullet (n-1)}$. Insert label $k$ as in Definition 
  \ref{defn:insert} and then perform an up-move as in Definition \ref{defn:decupmove}. 
 \end{enumerate}
\end{defn}

%Consider sampling from the conditional distribution of $T_n$, given $T_k^{(n)}=\ft^\bullet$, under $q_{n,1/2}$, or more generally, under $q_{n,\alpha}$. This gives us a random element from $\bT_{[n]}$ and a corresponding Markov transition kernel $\Lambda_n^k(\ft^\bullet,\cdot)$. Also define $\Lambda^n_k$ to be the trivial Markov kernel associated with the map $\rho_k^{\bullet(n)}$, i.e., $\Lambda_k^n(\ft, \cdot) = \delta_{\rho_k^{\bullet(n)}(\ft)}$, $\ft \in \bT_{[n]}$.

\subsection{Representation of uniform and alpha decorated transition matrices}

Recall the discussion following \eqref{eq:Mkvfn}. 

\begin{prop}\label{prop:kupdown}
  Let $P$ denote the Markov transition kernel for the uniform chain of Definition \ref{defn:simptreeupdown} and let 
  $\Lambda(\ft^\bullet,\cdot)$ denote the conditional law $q_{n,1/2}(\,\cdot\,|\,\rho^{\bullet(n)}_k=\ft^\bullet)$. 
  Then, the Markov transition kernel for the uniform decorated chain of Definition \ref{defn:simpdownup} is 
  $\Lambda P \rho^{\bullet(n)}_k$. Similarly, let $P^{(\alpha)}$ denote the transition kernel of the alpha chain of Definition 
  \ref{defn:ntreeupdown}. Then the Markov transition kernel for the alpha decorated chain in Definition \ref{downmove} is 
  $\Lambda^{(\alpha)} P^{(\alpha)} \rho^{\bullet(n)}_k$, where $\Lambda^{(\alpha)}(\ft^{\bullet},\cdot)=q_{n,\alpha}(\,\cdot\,|\, \rho^{\bullet(n)}_k=\ft^\bullet)$.
\end{prop}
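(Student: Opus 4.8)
The plan is to verify that the autonomously-defined transition rules of Definitions \ref{defn:simpdownup} and \ref{downmove} agree, step by step, with the composite kernel $\Lambda P \rho^{\bullet(n)}_k$ obtained by lifting a decorated $k$-tree $\ft^\bullet$ to an $n$-tree via the conditional law $q_{n,1/2}(\,\cdot\,|\,\rho_k^{\bullet(n)}=\ft^\bullet)$, running one step of the uniform chain, and then re-projecting. I will treat the alpha case, as the uniform case is the specialization $\alpha=\frac12$ (using $\widetilde q_{n,1/2}=q_{n,1/2}$). The key structural input is Lemma \ref{lem:spatialMarkov}: conditionally on $\rho_k^{\bullet(n)}(T_n)=\ft^\bullet$, the internal structures $\int(V_B)$, $B\in\fs$, are independent, with $\int(V_B)\sim q_{\mu(B),\alpha}$ for external $B$ and $\int(V_B)\sim\widetilde q_{\mu(B)+1,\alpha}$ for internal $B$, and this decomposition is what makes the ``local'' Dirichlet-multinomial splitting rules emerge.

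\textbf{Step 1: the edge-selection probability.} Under $\Lambda(\ft^\bullet,\cdot)$ the lifted $n$-tree $T_n$ has exactly $\mu(B)=y_B$ (or $x_i$) leaves in each component. The uniform/alpha chain selects a leaf of $T_n$ uniformly, so the probability that the selected leaf lies in the component over edge $B$ is exactly $y_B/n$ (resp.\ $x_i/n$), matching the selection rule in Definition \ref{defn:simpdownup}(i)--(ii). One then conditions on which component is hit and analyzes the induced move on that component together with the re-insertion.

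\textbf{Step 2: case analysis matching (A), (B), (C).} Within the hit component, deleting a uniform leaf and re-growing is — by Lemma \ref{lem:spatialMarkov} and the growth description — an $(\alpha,\alpha)$-type operation on an $(\alpha,\alpha)$ regenerative composition structure, so the relevant marginals of subtree masses are governed by the decrement matrix $\delta_\alpha$ of Lemma \ref{lem:topbeadkernel} and by Dirichlet-multinomial splits (which is exactly the de Finetti description of how masses distribute among fixed edges after randomizing the internal structure). Case (A), where the component does not risk collapsing (an internal edge, or an external edge with $x_i\ge2$), reduces to: remove a leaf, which with the internal-structure randomization just decreases that edge's mass by one; then the up-move re-inserts a leaf into the whole $n$-tree, which after re-projecting is precisely the decorated up-move of Definition \ref{defn:decupmove} — here one checks the up-move edge-weights $(x_i-\alpha)/(n-1-\alpha)$ and $(y_B+\alpha)/(n-1-\alpha)$ against the alpha growth rule applied to the randomized internal structures, again via Lemma \ref{lem:spatialMarkov}. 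Case (B), $x_i=1$ and $y_{\sparent i}>0$: removing the unique leaf of the external component collapses it into its parent, and the mass that ``returns'' to leaf $i$ on re-growth is distributed by the first-return kernel $\delta_\alpha(y_{\sparent i}:\cdot)$ of Lemma \ref{lem:topbeadkernel}; here one must track that the spinal component of the parent internal edge is $\widetilde q$-distributed, which is exactly why $\delta_\alpha$ (the $(\alpha,\alpha)$ decrement matrix), not some other kernel, appears. Case (C), $x_i=1$ and $y_{\sparent i}=0$: now leaf $i$ of the $k$-tree is genuinely at risk, the label-swap mechanism of Definition \ref{defn:simptreeupdown}/\ref{defn:ntreeupdown} kicks in at the level of the $k$-tree shape $\fs$ (this is the one place the swapping rule is visible after projection, because $a,b$ are the least labels in $\fs$, not in $\ft$), and one invokes Lemma \ref{lem:downinv}/Theorem \ref{prop4} to see that after the swap-and-drop the collapsed structure is again of the right regenerative type, whence the Dirichlet-multinomial re-insertion of Definitions \ref{defn:resample}/\ref{defn:insert} followed by the up-move reproduces $\Lambda^{(\alpha)}P^{(\alpha)}\rho_k^{\bullet(n)}$.

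\textbf{Main obstacle.} The genuinely delicate point is Case (C) and, more specifically, showing that the least-label comparison performed on the \emph{projected} $k$-tree $\fs$ coincides with the least-label comparison the full chain performs on $T_n$, \emph{restricted to the event that leaf $i$'s component has mass one and its parent edge mass zero}. On that event, the first two spinal subtrees of leaf $i$ in $T_n$ project onto genuine subtrees of $\fs$, and one must check that the minima $a,b$ computed in $\fs$ equal those in $T_n$ — this is where the precise definition of $\pi$ and of the internal structures, and the fact that labels $1,\dots,k$ sit at the leaves of $\fs$, are used. The remaining work — verifying that the Dirichlet-multinomial parameters $(1-\alpha,\dots,1-\alpha,\alpha,\dots,\alpha)$ and the shifts by $\pm1$ in Definitions \ref{defn:insert} and \ref{defn:resample} are exactly what Lemma \ref{lem:spatialMarkov} together with the alpha growth rule produces when a single new leaf is inserted into a $\widetilde q$- or $q$-distributed component and the component is then re-projected — is bookkeeping, but it is the bulk of the calculation and must be carried out carefully for both the external-edge and internal-edge sub-cases of each of (A), (B), (C).
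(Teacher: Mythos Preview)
Your approach is essentially that of the paper: condition on the edge $\beta\in\fs$ to which the selected leaf $I$ projects (giving the selection probabilities $\mu(B)/n$), then verify the three cases (A), (B), (C) of Definition \ref{downmove} separately, using Lemmas \ref{lem:spatialMarkov} and \ref{lem:topbeadkernel} for the internal-structure laws, and Corollary \ref{cor:insertion_unlabel} and Lemma \ref{ins} for the up-move and label insertion.

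There is, however, a real gap in your handling of the label swap, and it affects all three cases, not just Case (C). The heart of the argument is to track where the swapped label $\wi$ lands relative to $[k]$. In Case (A) you must show $\wi>k$, so that the swap is invisible at the level of the $k$-tree. For internal $B$ this is immediate, but for external $B=\{i\}$ with $x_i\ge 2$ it needs the observation that the first spinal subtree of leaf $i$ in $T_n$ has all its labels in $\pi^{-1}(\{i\})\subseteq\{i\}\cup\{k+1,\dots,n\}$, whereas if the second spinal subtree is \emph{not} contained in $\pi^{-1}(\{i\})$ it must contain a label $\le k$; hence the maximum of the two minima comes from the first subtree and exceeds $k$. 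Your write-up of Case (A) simply says ``remove a leaf \dots\ decreases that edge's mass by one'', which skips precisely this point.

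Your Case (B) description is where the mechanism is actually mis-stated. The phrase ``the mass that returns to leaf $i$ on re-growth'' suggests the new $x_i$ comes from the up-move; it does not. Since $x_i=1$, the first spinal subtree of leaf $i$ in $T_n$ is the subtree indexed by the sibling $\{i\}'\in\fs$, which contains a label $\le k$; the second spinal subtree $W$ lies entirely inside $\pi^{-1}(\sparent{i})$, so $\wi=\min W>k$. After the swap, label $i$ sits on a leaf of $W$, and the physical leaf that had been labeled $i$ is the one deleted. In $T'_{n-1}$ the component projecting to $\{i\}$ is therefore exactly $W$ (with $\wi$ replaced by $i$), of size $M_i=\#W$. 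It is this $M_i$ that is $\delta_\alpha(y_{\sparent{i}}:\cdot)$-distributed by Lemma \ref{lem:topbeadkernel}; the up-move is a separate, subsequent step applied to the already-modified decorated tree.

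One further correction: the uniform chain is not literally the $\alpha=\frac12$ specialization of the alpha chain, because the two use different relabeling conventions (the alpha chain shifts labels $\wi+1,\dots,n$ down by one after the drop; the uniform chain does not). This is exactly why Case (C) for the alpha chain passes through $\bT_{[k-1]}^{\bullet(n-1)}$ and Definition \ref{defn:insert} (the old $k+1$ becomes the new $k$, so $\rho_k^{\bullet(n-1)}(T'_{n-1})$ is \emph{not} deterministic under the conditioning, only $\rho_{k-1}^{\bullet(n-1)}(T'_{n-1})$ is), whereas the uniform chain uses Definition \ref{defn:resample}. The paper handles the two cases in parallel but separately for this reason.
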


To prove this proposition we first describe the decorated $k$-trees as a process when projected from an alpha growth process $(T_n, n\ge 1)$. Let $k\ge 2$. Condition on $T_k=\fs$ and consider the conditional law of the decorated $k$-tree process 
$$
T_{k}^{\bullet(k+r)}:=\rho_{k}^{\bullet(k+r)}(T_{k+r}),\; r\ge 0.
$$
Clearly every $T_{k}^{\bullet(k+r)}$ has the same $k$-tree shape given by $\fs$ and nondecreasing edge weights. Let $x_j(r)$ and $y_B(r)$, for $j\in [k]$ and $B \in \fs$, $\#B\ge 2$, denote the masses assigned to edges of $\fs$ in $T_{k}^{\bullet(k+r)}$. 

\begin{lemma}\label{lem:lempolya} The joint distribution of the multi-dimensional process
$$
\left( x_1(r)-1, \ldots, x_k(r)-1, \; \left( y_B(r) \right)_{B \in \fs:\; \#B \ge 2}    \right),\; r\ge 0,
$$
is given by a generalized P\'{o}lya urn model $H(r)=(H_B(r), B\!\in\! \fs )$, $r\!\ge\! 0$, with weights 
$$
\begin{cases}
\alpha+H_B(\cdot),& \text{when $B$ is internal, and}\\
1-\alpha+ H_B(\cdot), & \text{when $B$ is external}.
\end{cases}
$$
The initial condition is $H_B(0)=0$ for all $B\in \fs$.\pagebreak[2]
\end{lemma}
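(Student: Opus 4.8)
The plan is to produce the decorated process directly from Ford's alpha growth and to read off its one-step dynamics as those of a generalized P\'olya urn. Condition throughout on $T_k=\fs$, and recall from Definition \ref{defn:ford} that for $r\ge 0$ the tree $T_{k+r+1}$ is obtained from $T_{k+r}$ by inserting leaf $k+r+1$ below an edge $S_r$ of $T_{k+r}$ chosen with probability proportional to the Ford weight $w(S):=1-\alpha$ if $S$ is external and $w(S):=\alpha$ if $S$ is internal; thus $(T_{k+r})_{r\ge 0}$ is a Markov chain. For a tree $\ft\in\bT_{[m]}$ with $\ft\cap[k]=\fs$ and an edge $S$ of $\ft$, let $A(S)$ be the lowest edge on the ancestral line of $S$ --- including $S$ itself --- whose label set meets $[k]$, and set $B(S):=A(S)\cap[k]\in\fs$. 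This is exactly the rule defining the projection $\pi$ in Section \ref{sec:decoration}, so a leaf inserted below $S$ projects onto the edge $B(S)$ of $\fs$.

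First I would establish two combinatorial facts about a single insertion $\ft=\ft'+(S^*,m)$, with $\ft'\in\bT_{[m-1]}$, $\ft'\cap[k]=\fs$ and $m-1\ge k$. \textbf{(i)} Passing from $\ft'$ to $\ft$ leaves the tree shape $\fs=\ft\cap[k]$ unchanged, leaves $\pi(\ell)$ unchanged for every $\ell\le m-1$, and gives $\pi(m)=B(S^*)$; hence $\rho^{\bullet(m)}_k(\ft)$ is obtained from $\rho^{\bullet(m-1)}_k(\ft')$ by adding $1$ to the mass of the single edge $B(S^*)$, and nothing else changes. \textbf{(ii)} Identifying each edge of $\ft'$ with its relabelled counterpart in $\ft$, the function $B(\cdot)$ and the weight $w(\cdot)$ are unchanged on every edge of $\ft'$, while the two edges created by the insertion, namely the external edge $\{m\}$ and the internal edge $S^*\cup\{m\}$, both satisfy $B(\{m\})=B(S^*\cup\{m\})=B(S^*)$. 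Both facts reduce to the observations that adjoining the label $m>k$ to an edge does not change its intersection with $[k]$, that the only created edge lying on the ancestral line of an old leaf is $S^*\cup\{m\}$ (which sits between $S^*$ and its former parent and has the same $[k]$-trace as $S^*$), and that $\{m\}$ and $S^*\cup\{m\}$ have the same lowest $[k]$-meeting ancestor as $S^*$ does in $\ft'$.

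Next, for each edge $B\in\fs$ and each $\ft\in\bT_{[m]}$ with $\ft\cap[k]=\fs$, put $W_B(\ft):=\sum_{S\in\ft:\,B(S)=B}w(S)$. I would prove by induction on $m\ge k$ the deterministic identity
\[
W_B(\ft)=\begin{cases} x_i-\alpha, & B=\{i\}\text{ external in }\fs,\\ y_B+\alpha, & B\text{ internal in }\fs,\end{cases}
\]
where $(x_1,\dots,x_k,(y_B)_B)$ are the edge masses of $\rho^{\bullet(m)}_k(\ft)$. The base case $m=k$ holds because then every edge $S$ of $\ft=\fs$ meets $[k]$, so $B(S)=S$ and $W_B(\fs)=w(B)$, which is $1-\alpha=x_i-\alpha$ (since $x_i=1$) on external edges and $\alpha=y_B+\alpha$ (since $y_B=0$) on internal edges. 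For the inductive step write $\ft=\ft'+(S^*,m)$ with $\ft'=\ft-m$: by \textbf{(ii)}, $W_B(\ft)=W_B(\ft')$ for $B\ne B(S^*)$ and $W_{B(S^*)}(\ft)=W_{B(S^*)}(\ft')+w(\{m\})+w(S^*\cup\{m\})=W_{B(S^*)}(\ft')+(1-\alpha)+\alpha=W_{B(S^*)}(\ft')+1$, while by \textbf{(i)} the masses of $\rho^{\bullet(m)}_k(\ft)$ equal those of $\rho^{\bullet(m-1)}_k(\ft')$ with a $+1$ on $B(S^*)$; the inductive hypothesis then closes the induction.

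Finally I would assemble the pieces. Given $T_k=\fs$ and $\sigma(T_k,\dots,T_{k+r})$, the probability that the next Ford insertion raises the mass on edge $B$ of $\fs$ equals $W_B(T_{k+r})/\sum_{B'\in\fs}W_{B'}(T_{k+r})$, which by the identity is a function of $\rho^{\bullet(k+r)}_k(T_{k+r})=T^{\bullet(k+r)}_k$ alone: proportional to $x_i(r)-\alpha$ on an external edge $\{i\}$ and to $y_B(r)+\alpha$ on an internal edge $B$. Since by \textbf{(i)} the decorated tree $\rho^{\bullet(k+r+1)}_k(T_{k+r+1})$ is obtained from $\rho^{\bullet(k+r)}_k(T_{k+r})$ by adding $1$ to exactly that edge, the process $(\rho^{\bullet(k+r)}_k(T_{k+r}))_{r\ge0}$ is Markov and makes, at each step, precisely the move of a generalized P\'olya urn on the ball types $\fs$ with weight $(\text{current count})+\alpha$ on internal edges and $(\text{current count})+(1-\alpha)$ on external edges. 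Writing $H_B(r)=x_i(r)-1$ on external edges $B=\{i\}$ and $H_B(r)=y_B(r)$ on internal edges, and using that $\rho^{\bullet(k)}_k(T_k)$ has $x_i=1$ and $y_B=0$, this is exactly the urn $H(r)$ of the statement, with $H_B(0)=0$. The main obstacle is the careful bookkeeping behind facts \textbf{(i)} and \textbf{(ii)}: one must track the relabelling of all ancestors of $S^*$ and split into the cases according to whether $S^*$ meets $[k]$ and whether an old edge lies above $S^*$, so as to be sure that no old leaf's projection moves and that $B(\cdot)$ is genuinely stable under the insertion.
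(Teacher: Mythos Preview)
Your proof is correct and self-contained. The paper, by contrast, does not actually prove this lemma: it simply states the urn interpretation in one sentence and refers the reader to \cite[Proposition 18]{HMPW} and \cite[Proposition 14]{PW09}. Your argument is essentially a direct proof of the cited fact. The key induction showing that $W_B(\ft)$ equals the current mass on $B$ shifted by $\pm\alpha$ is exactly the observation that each insertion contributes one new external edge (weight $1-\alpha$) and one new internal edge (weight $\alpha$) to a single colour class $B(S^*)$, so that the total Ford weight and the leaf mass assigned to that class both increase by $1$. Your facts \textbf{(i)} and \textbf{(ii)} are sound; the crucial point, which you handle correctly, is that adjoining the label $m>k$ to any edge leaves its intersection with $[k]$ unchanged, so neither the tree shape $\fs$ nor the map $B(\cdot)$ is disturbed by the insertion, and old leaves keep their projections. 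Your version has the modest advantage of being independent of the external references and of making explicit why the conditional law of the increment depends on $T_{k+r}$ only through $\rho^{\bullet(k+r)}_k(T_{k+r})$.
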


In this P\'{o}lya urn, the edges $B\in\fs$ are the colors in the urn, and the model follows the standard update rule, i.e.\ selecting colors with probabilities proportional to weights and setting $H_{B}(r+1)=H_B(r)+1$ if color $B$ is selected at step $r+1$. See \cite[Proposition 18]{HMPW}, \cite[Proposition 14]{PW09}.% and their proofs. 

It follows from Lemma \ref{lem:lempolya} that $(T_{k}^{\bullet(k+r)})_{r\ge 0}$ is a Markov chain with a limiting Dirichlet distribution for the weights. The following corollaries are easy consequences of Lemma \ref{lem:lempolya}. 

\begin{cor}\label{cor:insertion_unlabel} The up-move in Definition \ref{defn:decupmove} describes the conditional distribution of $T_{k}^{\bullet(n)}$, given $T_{k}^{\bullet(n-1)}=\ft^\bullet$.
\end{cor}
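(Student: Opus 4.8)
The plan is to read the statement off directly from the generalized P\'olya urn representation of Lemma~\ref{lem:lempolya}. First I would note that conditioning on $T_k^{\bullet(n-1)}=\ft^\bullet$ pins down both the tree shape $\fs$ and the full edge-mass vector $(x_1,\ldots,x_k,(y_B)_{B\in\fs,\,\#B\ge 2})$; by the consistency property of the growth process, iterated ($T_{n-1}-(n-1)=T_{n-2}$, etc.), one has $T_{n-1}\cap[k]=T_k$, so the event $\{T_k^{\bullet(n-1)}=\ft^\bullet\}$ is exactly $\{T_k=\fs\}$ intersected with the event that the urn process $H(\cdot)$ of Lemma~\ref{lem:lempolya} is in the state determined by $\ft^\bullet$ at step $r=n-1-k$. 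Since a generalized P\'olya urn is Markov in $r$, the conditional law of $T_k^{\bullet(n)}$ given $\{T_k^{\bullet(n-1)}=\ft^\bullet\}$ is precisely the law of one further urn step from that state, and in particular it does not depend on the earlier history.

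Next I would translate the urn step back into edge masses. Writing $H_{\{i\}}=x_i-1$ for the external colours and $H_B=y_B$ for the internal ones, the urn weight of an external colour $\{i\}$ is $1-\alpha+H_{\{i\}}=x_i-\alpha$ and that of an internal colour $B$ is $\alpha+H_B=y_B+\alpha$. A rooted binary $k$-tree has $k$ external and $k-1$ internal edges, and $\sum_i(x_i-1)+\sum_B y_B=(n-1)-k$, so the total weight is $k-\alpha+((n-1)-k)=n-1-\alpha$. Hence the urn selects edge $\{i\}$ with probability $(x_i-\alpha)/(n-1-\alpha)$ and internal edge $B$ with probability $(y_B+\alpha)/(n-1-\alpha)$, then increments the mass of the chosen edge by one. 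This is word for word the up-move of Definition~\ref{defn:decupmove} with parameter $\alpha$, which establishes the corollary for $k\ge 2$ as set up above; the degenerate case $k=1$ is immediate, as then both sides are deterministic (mass $n$ on the unique edge).

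I do not expect a genuine obstacle here, since Lemma~\ref{lem:lempolya} already does the substantive work. The two points that need a little care are: (a) the reduction in the first paragraph, namely that conditioning on the decorated $k$-tree is the same as conditioning on the current state of the urn, which is exactly where the Markov property of generalized P\'olya urns enters; and (b) the weight and edge-count bookkeeping that forces the normalizing constant to be $n-1-\alpha$ and thus matches Definition~\ref{defn:decupmove} on the nose.
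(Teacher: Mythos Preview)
Your proposal is correct and matches the paper's intended argument: the corollary is stated there without proof, as an immediate consequence of Lemma~\ref{lem:lempolya}, and your two paragraphs spell out exactly this reading---the Markov property of the urn for step (a), and the weight bookkeeping yielding the normaliser $n-1-\alpha$ for step (b).
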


\begin{cor}\label{cor2} Let $T_k \sim q_{k,\alpha}$. Given $T_k = \fs$, use the edges as an indexing set where the external edges are labeled by $1,2,\ldots,k$ corresponding to the leaf labels, and the internal edges are labeled (e.g. in lexicographical order) by $k+1, \ldots, 2k-1$. Now, generate a vector $\left( J_1, \ldots, J_{2k-1} \right)\sim \DirM^\alpha_{2k-1}(n-k)$. Assign $J_i$ to the edge labeled $i$ for each $i \in [2k-1]$. For $i\in [k]$ define $x_i = J_i+1$, and for $B\in \fs$, $\#B\ge 2$, set $y_B = J_l$, where $l\in [k+1,2k-1]$ is the label of edge $B$. 
%Define $x_i=J_i+1$ and $y_B=J_l$, where the label of $B\in \fs$, for $\#B\ge 2$, is $l$ via the above correspondence. 
Then $\left( \fs,  (x_1, \ldots, x_k, (y_B)_{B \in \fs:\; \# B\ge 2})  \right)$ has the same distribution as $\rho_{k}^{\bullet(n)}$ under $q_{n,\alpha}$.    
\end{cor}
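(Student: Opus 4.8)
The plan is to combine the marginal law of the tree shape $T_k$ with the conditional law of the edge masses supplied by Lemma \ref{lem:lempolya}, and then to recognize the generalized P\'olya urn marginal as a Dirichlet-multinomial distribution.

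First I would realize $T_n\sim q_{n,\alpha}$ as the $n$-th tree in an alpha growth process $(T_m,m\ge 1)$; this is legitimate because $q_{n,\alpha}$ is by definition the law of $T_n$ in that process. By the consistency property $T_m- m=T_{m-1}$, deleting leaves $k+1,\ldots,n$ from $T_n$ returns $T_k$, so the tree shape of $\rho_k^{\bullet(n)}(T_n)=T_k^{\bullet(n)}$ is $T_k\sim q_{k,\alpha}$, matching the shape in the statement. It then remains to identify, conditionally on $T_k=\fs$, the joint law of the mass vector $\left(x_1-1,\ldots,x_k-1,(y_B)_{B\in\fs,\ \#B\ge 2}\right)$.

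Conditioning on $T_k=\fs$, Lemma \ref{lem:lempolya} identifies this vector (at time $r=n-k$) with the state $H(n-k)=(H_B(n-k))_{B\in\fs}$ of a generalized P\'olya urn on the $2k-1$ colors indexed by the edges of $\fs$, with additive weights $\alpha$ on the $k-1$ internal edges and $1-\alpha$ on the $k$ external edges, started from $H_B(0)=0$. I would then invoke the classical fact that a P\'olya urn started from deterministic initial weights $(w_B)_B$ with all counts zero produces an exchangeable sequence of drawn colors whose de Finetti mixing measure is $\mathrm{Dirichlet}\big((w_B)_B\big)$; hence after $m=n-k$ draws the color counts are exactly Dirichlet-multinomial with parameters $m$ and $(w_B)_B$. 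Ordering the edges so that the $k$ external edges (weight $1-\alpha$) come first and the $k-1$ internal edges (weight $\alpha$) come last, this is precisely $\DirM^\alpha_{2k-1}(n-k)$ as defined before the corollary. (When $n=k$ this is degenerate: $\DirM^\alpha_{2k-1}(0)$ concentrates on the zero vector, consistent with $x_i=1$, $y_B=0$; and for $k=1$ there are no internal edges and the claim is immediate.)

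Finally I would reassemble: unconditionally, $\big(\fs,(H_B(n-k))_{B\in\fs}\big)$ has the law ``$T_k\sim q_{k,\alpha}$, and given $T_k=\fs$ the vector $(H_B(n-k))_{B\in\fs}$ is $\DirM^\alpha_{2k-1}(n-k)$,'' and translating back via $x_i=H_{\{i\}}(n-k)+1$ for $i\in[k]$ and $y_B=H_B(n-k)$ for internal $B$ yields exactly the distribution described in the statement. I expect the only point requiring genuine care to be the bookkeeping between the edge-indexing in Lemma \ref{lem:lempolya} and the ordered coordinate vector in the definition of $\DirM^\alpha_{2k-1}$, together with checking the correct (weights $1-\alpha$ on external, $\alpha$ on internal) parametrization in the urn-to-Dirichlet-multinomial passage; the probabilistic substance is carried entirely by Lemma \ref{lem:lempolya}.
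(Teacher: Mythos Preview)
Your proposal is correct and follows essentially the same route as the paper: realize $T_n$ via the alpha growth process so that the shape is $T_k\sim q_{k,\alpha}$, invoke Lemma~\ref{lem:lempolya} to identify the edge-mass vector with the state of a generalized P\'olya urn after $n-k$ draws, and then use exchangeability/de Finetti to recognize the law of the counts as $\DirM^\alpha_{2k-1}(n-k)$. If anything, your bookkeeping on the parametrization (external edges carrying weight $1-\alpha$ in the first $k$ coordinates, internal edges weight $\alpha$ in the last $k-1$) is cleaner than the paper's own sentence, which appears to have the two swapped.
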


\begin{proof} By definition, $T_k$ is distributed as $q_{k,\alpha}$. The ${\rm DM}^\alpha_{2k-1}$ distribution is due to the well-known exchangeability of the P\'olya urn model. If we let $r\rightarrow \infty$ in Lemma \ref{lem:lempolya}, the proportion of counts of each color has a limit $\mathbf{p}$ that is distributed according to the Dirichlet distribution with a parameter vector $v$ that is $\alpha$ in the first $k$ coordinates and $(1-\alpha)$ in the rest. By de Finetti's theorem, given a realization of this limiting proportion $\mathbf{p}$, the counts of balls is conditionally multinomial with probability vector $\mathbf{p}$. This, by definition, is $\DirM^{\alpha}_{2k-1}(n-k)$.
\end{proof}

%We give a computational proof for the benefit of the reader. Let $n=k+r$. Fix $\alpha \in (0,1)$. In the P\'{o}lya urn model, to obtain $H_B(r)=j_B$, ${B\in\fs}$, after $r$ steps, there are $r!/\prod_{B\in\fs\setminus\{\varnothing\}}j_B!$ possible distinct paths of $(H_0,\ldots,H_r)$, which all have the same probability, so that for $j_B\in\bN$, $B\in\fs\setminus\{\varnothing\}$, $\sum_{B\in\fs\setminus\{\varnothing\}}j_B=r$,
%\eq\label{eq:polyaprob}
%\begin{split}
%\bP&\left(H_r=(j_B)_{B\in\fs\setminus\{\varnothing\}}\mid T_k=\fs\right) \\
%&=\frac{r!}{\prod_{B\in\fs\setminus\{\varnothing\}}j_B!}\frac{\prod_{B:\#B=1}\prod_{i=0}^{j_B-1}(i +1-\alpha) \prod_{B:\#B\ge 2}\prod_{i=0}^{j_B-1}(i +\alpha)}{\prod_{i=k}^{k+r-1}(i-\alpha)} \\
%&=\frac{r!}{\prod_{i=k}^{k+r-1}(i-\alpha)} \prod_{B:\; \#B=1}\frac{\Gamma(j_{B}+1-\alpha)}{j_B! \Gamma(1-\alpha)} \prod_{B:\; \#B\ge 2} \frac{\Gamma(j_{B}+\alpha)}{j_B! \Gamma(\alpha)}.
%\end{split}
%\en
%A comparison of the above probability mass function with \eqref{eq:pmfdirm} shows that, independent of $\fs$, the vector $\left( j_B,\; B \in \fs\setminus \{\varnothing \}   \right)$ is distributed according to $\DirM^\alpha_{2k-1}(n-k)$.

We now vary $k$ in Lemma \ref{lem:lempolya}. Fix $n \in \NN$, and consider the stochastic process $\left( T^{\bullet(n)}_{k},\; k=1,2,\ldots, n  \right)$. It is not hard to see that this is also a Markov chain. To describe its transition probabilities, notice that the alpha growth process gives the following relation between $T^{\bullet(n)}_{k-1}$ and $T^{\bullet(n)}_{k}$. When we attach the $k$th leaf to $T_{k-1}$, it introduces two new edges on which the future leaves can get attached. If we imagine the count of leaves growing on the edges of $T_{k-1}$ as an urn model, the $k$th leaf chooses one of the colors in $T_{k-1}$ at random and splits it into three new colors. The transition kernel from $T^{\bullet(n)}_{k-1}$ to $T^{\bullet(n)}_{k}$ is given by the conditional probability of (i) which of the original colors got split and (ii) what the proportions of the new colors are after all $n$ leaves have been attached.

\begin{lemma}\label{ins} Let $T_{n-1}\sim q_{n-1,\alpha}$, set $T^{\bullet(n-1)}_k=\rho_k^{\bullet(n-1)}(T_{n-1})$ and
  $T^{\bullet(n-1)}_{k-1}=\rho_{k-1}^{\bullet(n-1)}(T_{n-1})$. Then inserting a label, as in Definition \ref{defn:insert}, describes the
  conditional distribution of $T^{\bullet(n-1)}_{k}$ given 
  $T^{\bullet(n-1)}_{k-1}=\left( \fs,\left(x_1,\ldots,x_k,(y_B)_{B\in\fs,\; \# B\ge 2}\right) \right) \in \bT_{[k-1]}^{\bullet(n-1)}$. 
\end{lemma}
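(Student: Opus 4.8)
\textbf{Proof proposal for Lemma \ref{ins}.}

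The plan is to realize $T_{n-1}\sim q_{n-1,\alpha}$ as the $(n-1)$st stage of an alpha growth process $(T_m,m\ge 1)$ and to track how the $k$th leaf, inserted at stage $k$, partitions the mass that eventually lands on the edge of $\fs':=T_{k-1}$ that it subdivides. First I would condition on $T_{k-1}=\fs'$ and on the whole collapsed history up to stage $k-1$; by Lemma \ref{lem:lempolya} (applied with $k-1$ in place of $k$), the future evolution of the edge-masses of $T^{\bullet(n-1)}_{k-1}$ is a P\'olya urn $H=(H_B)_{B\in\fs'}$ with per-edge weights $\alpha+H_B$ for internal $B$ and $1-\alpha+H_B$ for external $B$. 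When leaf $k$ arrives it attaches to edge $B$ of $\fs'$ with probability proportional to its current urn weight; by exchangeability of the P\'olya urn the position of leaf $k$ among all $n-k$ later arrivals is exchangeable, so the probability that leaf $k$ sits on edge $B$ equals the asymptotic proportion argument of Corollary \ref{cor2}, giving probability $(x_i-1)/(n-k)$ if $B=\{i\}$ is external and $y_B/(n-k)$ if $B$ is internal. This matches step (i) of Definition \ref{defn:insert}.

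Next, conditionally on leaf $k$ having split edge $B$, I would argue that the masses eventually accumulating on the three edges that replace $B$ — namely the two pieces of $B$ above and below the new branch point and the new external edge $\{k\}$ — are obtained by a further P\'olya sub-urn restricted to descendants of $B$. This is the key structural point: after leaf $k$ is placed, the subsequent leaves that fall in (the subtree spanned by) $B$ split among these three edges according to an urn with weights that are, by Definition \ref{defn:ford}, $\alpha$ for the two internal pieces and $1-\alpha$ for the external piece $\{k\}$ — \emph{except} that when $B$ was itself external (so $\{i\}$ had weight parameter $1-\alpha$), the ``upper'' piece $\{i,k\}$ and the new leaf edges carry the appropriate external/internal parameters as recorded in steps (ii)–(iii). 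Hitting this sub-urn with the exchangeability/de Finetti argument exactly as in the proof of Corollary \ref{cor2}, the joint law of the three accumulated masses is Dirichlet-multinomial: $\DirM^\alpha_3(x_i-2)$ when $B=\{i\}$ is external (the total mass to be split is $x_i-1$ leaves placed after $k$, minus the one guaranteed on $\{i\}$, hence parameter $x_i-2$, with the split $x_i=j_1^*+1$, $x_k=j_2^*+1$, $y_{\{i,k\}}=j_3^*$), and $\DirM^{1-\alpha}_3(y_B-1)$ when $B$ is internal (total $y_B$ leaves on $B$ minus the one guaranteed on $\{k\}$, i.e.\ parameter $y_B-1$, with $y_{B\cup\{k\}}=j_1^*$, $y_B=j_2^*$, $x_k=j_3^*+1$). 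The care needed is bookkeeping the ``$+1$'' offsets: each external edge always carries at least one leaf, which is why the $\DirM$ parameters are shifted down and the resulting counts shifted back up.

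Finally I would check that the split of edge $B$ is conditionally independent of the masses on all other edges of $\fs'$ given which edge was split and its total mass — this follows from the same branching/regeneration property of Ford growth used in Lemma \ref{lem:spatialMarkov}, since once the total leaf count on $B$ is fixed, how those leaves distribute within the subtree over $B$ is independent of what happens elsewhere — so that the transition kernel from $T^{\bullet(n-1)}_{k-1}$ to $T^{\bullet(n-1)}_{k}$ factors as (choose $B$) $\times$ (split its mass), which is precisely Definition \ref{defn:insert}. The main obstacle is not conceptual but notational: correctly matching the two regimes (external vs.\ internal $B$) with the correct $\alpha$ versus $1-\alpha$ roles and the correct mass offsets, and justifying that one may pass from ``proportions of a limiting Dirichlet'' to ``$\DirM$ counts'' on the \emph{sub}-urn while it runs only finitely long — both of which are handled by the exchangeability of the P\'olya urn exactly as in Corollary \ref{cor2}.
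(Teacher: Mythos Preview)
Your proposal is correct and follows essentially the same route as the paper: both arguments realize $T_{n-1}$ via the alpha growth process, invoke the P\'olya urn of Lemma \ref{lem:lempolya} for the $(k-1)$-tree, use exchangeability to get the edge-selection probability $j_{B^*}/(n-k)$, and then identify the three-way split as a sub-urn yielding the appropriate $\DirM_3$ law in the external and internal cases. The only quibble is that your appeal to ``the asymptotic proportion argument of Corollary \ref{cor2}'' for step (i) is slightly off---the paper (and your own first clause) correctly attribute this to exchangeability directly: given the final counts $(j_B)$, the first ball is in color $B^*$ with probability $j_{B^*}/r$, no limit needed.
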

\begin{proof} Consider the P\'{o}lya urn set-up from Lemma \ref{lem:lempolya} and Corollary \ref{cor2}. Let $r=n-k$.
Fix $B^*\in \fs$. Given $H_B(r)=j_B$, $B \in \fs$, the conditional probability that the $k$th leaf got attached to $B^*$ is $j_{B^*}/r$ by exchangeability of the urn scheme. Once the $k$th leaf gets attached, the three new colors evolve as a P\'{o}lya urn model themselves for time $r-1$. Let $B^*_1$, $B^*_2$, $B^*_3$ be the three new colors. Then $H_{B^*_1}(0)=H_{B^*_2}(0)=H_{B^*_3}(0)=0$. If $j^*_1, j^*_2, j^*_3$ form the state of the three colors at time $r-1$ (when $n-1$ leaves have been attached), then, we must have $j^*_1+j^*_2+j^*_3=j_{B^*}-1$. 

Suppose $B^*$ is external. Then the introduction of the new leaf creates two external edges (say $B_1^*$ and $B_2^*$) and one internal ($B_3^*$). Thus the distribution of 
$\left(  H_{B_1^*}(r-1), H_{B_2^*}(r-1), H_{B_3^*}(r-1) \right)$, given $H_{B^*}(r)=j_{B^*}=x_{B^*}-1$, is $\DirM$ with parameters $x_{B^*}-2$ and $(1-\alpha, 1-\alpha, \alpha)$. 
On the other hand, if $B^*$ is internal, the introduction of the new leaf creates two internal edges and one external. This gives us a ${\rm DM}$ distribution with parameters $y_{B^*}-1$ and $(\alpha, \alpha, 1-\alpha)$, where the first two coordinates stand for the internal edges and the last one for the external edge. 

Hence, the edge selection and splitting probabilities are as in Definition \ref{defn:insert}.
\end{proof}

\begin{lemma}\label{lem:revamp} Fix $i\in [k]$. Let $T_n\sim q_{n,1/2}$, $T_{n-1}=T_n\cap[n-1]$, $T^{\bullet(n)}_k=\rho_k^{\bullet(n)}(T_n)$ and 
  $T^{\bullet(n-1)}_{[k]\backslash \{i\}}:=\rho^{\bullet(n-1)}_{[k]\backslash \{i\}}(T_{n-1})$. Then performing steps (iii), (iv) and (v)
  of Definition \ref{defn:resample}, describes the conditional distribution of $T_k^{\bullet(n)}$ given $T^{\bullet(n-1)}_{[k]\backslash \{i\}}=\ft^\bullet\in\bT_{[k]\setminus\{i\}}^{\bullet(n-1)}$.
\end{lemma}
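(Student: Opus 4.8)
The plan is to realize the conditional law as a composition of two kernels that have already been identified in this section and then to check that this composition is exactly what steps (iii)--(v) of Definition \ref{defn:resample} prescribe.

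\emph{Reduction to $i=k$.} First I would use the exchangeability of $q_{n,1/2}$. Pick a permutation $\sigma$ of $[n]$ that fixes $\{k+1,\dots,n\}$ pointwise, maps $[k]\setminus\{i\}$ onto $[k-1]$ in increasing order and sends $i\mapsto k$. Then $\sigma(T_n)\sim q_{n,1/2}$; since $\sigma$ fixes $n$ it commutes with $\ft\mapsto\ft\cap[n-1]$; since $\sigma$ permutes only within $[k]$ the map $\rho^{\bullet(n)}_k$ is $\sigma$-equivariant and $\rho^{\bullet(n-1)}_{[k]\setminus\{i\}}\circ\sigma=\rho^{\bullet(n-1)}_{[k-1]}$; and steps (iii)--(v) of Definition \ref{defn:resample} are themselves invariant under relabelling of the leaves. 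Hence it suffices to treat $i=k$ and $\ft^\bullet\in\bT^{\bullet(n-1)}_{[k-1]}$.

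\emph{Decomposition through an intermediate tree.} Write $T_{n-1}=T_n\cap[n-1]$, which is $q_{n-1,1/2}$-distributed by R\'emy consistency, and introduce $T^{\bullet(n-1)}_k:=\rho^{\bullet(n-1)}_k(T_{n-1})$. The $(k-1)$-projection is a deterministic function of the $k$-projection (contract leaf $k$ in the $k$-tree shape and pool its mass with that of its sibling), so $\rho^{\bullet(n-1)}_{[k-1]}(T_{n-1})$ is a function of $T^{\bullet(n-1)}_k$ and therefore $\rho^{\bullet(n)}_k(T_n)\perp T^{\bullet(n-1)}_{[k-1]}\mid T^{\bullet(n-1)}_k$. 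By the tower property,
\[
\mathcal L\bigl(\rho^{\bullet(n)}_k(T_n)\,\big|\,T^{\bullet(n-1)}_{[k-1]}=\ft^\bullet\bigr)=\sum_{\ft^{\bullet\bullet}}\mathcal L\bigl(\rho^{\bullet(n)}_k(T_n)\,\big|\,T^{\bullet(n-1)}_k=\ft^{\bullet\bullet}\bigr)\,\Pr\bigl(T^{\bullet(n-1)}_k=\ft^{\bullet\bullet}\,\big|\,T^{\bullet(n-1)}_{[k-1]}=\ft^\bullet\bigr).
\]
By Corollary \ref{cor:insertion_unlabel} the first factor is the up-move of Definition \ref{defn:decupmove} at $\alpha=\tfrac12$ (raising the mass from $n-1$ to $n$), and by Lemma \ref{ins}, applied at total mass $n-1$ and $\alpha=\tfrac12$, the second is given by inserting label $k$ as in Definition \ref{defn:insert}. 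So the conditional law equals ``insert label $k$ into $\ft^\bullet$ as in Definition \ref{defn:insert}, then apply the up-move of Definition \ref{defn:decupmove}'', both at $\alpha=\tfrac12$.

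\emph{Matching Definition \ref{defn:resample}, the main obstacle.} The remaining step is to show that this two-step composition is exactly what steps (iii)--(v) of Definition \ref{defn:resample} do; the probabilistic content is already exhausted, and what is left is a combinatorial identity within the P\'olya-urn/Dirichlet--multinomial family. The key input is the sequential (de Finetti / P\'olya) description of $\DirM$: a $\DirM^{1/2}_3(m)$ sample followed by one weight-proportional P\'olya step has the law of a $\DirM^{1/2}_3(m+1)$ sample. Using this --- together with the elementary $\Gamma(a)\,a=\Gamma(a+1)$ to carry out the summation over which of the three new edges receives the extra mass unit --- one converts the ``$\DirM^{1/2}_3(x_j-2)$, resp.\ $\DirM^{1/2}_3(y_B-1)$, followed by an up-move'' of the composition into the ``$\DirM^{1/2}_3(x_j-1)$, resp.\ $\DirM^{1/2}_3(y_B)$'' of steps (iv)--(v), and one verifies that the insertion denominators $n-k$ and the up-move denominators $n-\tfrac32$ reassemble, after summing out the hidden insertion choice, into the single edge-selection law $(x_j-\tfrac12)/(n-\tfrac32)$, $(y_B+\tfrac12)/(n-\tfrac32)$ of step (iii). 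The delicate bookkeeping is to keep the external and internal cases straight --- at $\alpha=\tfrac12$ the two parameter orderings of Definition \ref{defn:insert} both collapse to the symmetric parameter $\tfrac12$, which is exactly why the statement is special to $\alpha=\tfrac12$ --- and to track which split coordinates carry the forced ``$+1$''s so that external edges retain positive mass. I expect this urn computation to be the part that needs the most care.
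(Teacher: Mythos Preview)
Your decomposition through $T^{\bullet(n-1)}_k$ is sound and does yield the correct description of $\mathcal L\big(T^{\bullet(n)}_k\mid T^{\bullet(n-1)}_{k-1}=\ft^\bullet\big)$ as ``insert label $k$ via Definition~\ref{defn:insert}, then up-move via Definition~\ref{defn:decupmove}.'' However, the combinatorial identity you plan to establish in the last paragraph is \emph{false}: insert-then-up-move is \emph{not} the same kernel as steps (iii)--(v) of Definition~\ref{defn:resample}. Take $k=3$, $n=5$ and $\ft^\bullet=(x_1,x_2,y_{\{1,2\}})=(1,1,2)\in\bT^{\bullet(4)}_{[2]}$. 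Conditioning on $\rho^{\bullet(4)}_{2}(T_4)=\ft^\bullet$ forces leaf $3$ to project to edge $\{1,2\}$ in $T_4$, hence $T_5\cap[3]$ has the fixed shape ``$3$ below the $\{1,2\}$ branch point''; correspondingly, in your insert step the edge $\{1,2\}$ is selected with probability $1$ (since $(x_1-1)/(n-k)=(x_2-1)/(n-k)=0$). By contrast, step~(iii) of Definition~\ref{defn:resample} selects edge $\{1\}$ with probability $(x_1-\tfrac12)/(n-\tfrac32)=1/7>0$, producing a different $3$-tree shape. So the urn identity you anticipate cannot hold; in your composition the up-move may land on an edge \emph{other} than the one that was split, whereas in Definition~\ref{defn:resample} the extra unit of mass always goes to the split edge.

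This failure is not a flaw in your reasoning but in the lemma as stated: the paper's proof does not compute the conditional law you (correctly) identified. Instead, after reducing to $i=k$, it invokes exchangeability a second time to swap leaves $k$ and $n$, and what it actually establishes is that the conditional law of $\rho^{\bullet(n)}_{[k-1]\cup\{n\}}(T_n)$ (with $n$ relabelled $k$) given $\rho^{\bullet(n-1)}_{k-1}(T_{n-1})=\ft^\bullet$ is described by steps (iii)--(v). Here leaf $n$ is the one being ``inserted,'' so step~(iii) is exactly the up-move edge selection, and steps~(iv)--(v) are the $\DirM$ split of the selected edge as in Lemma~\ref{ins}; no further urn identity is needed. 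This version is what is used in Case~(C) of Proposition~\ref{prop:kupdown}, where the resampled label $\wi$ is genuinely reinserted from outside. Your route would prove the lemma as written if the final identity held, but it does not; the paper's route proves a slightly different (and correct) statement by choosing the intermediate object so that the match with Definition~\ref{defn:resample} is immediate.
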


\begin{proof} By exchangeability of leaf labels in $q_{n,1/2}$, it is enough to consider the case when $i=k$. Thus, we compute the 
  conditional distribution of $T^{\bullet (n)}_{k}$, given $T^{\bullet(n-1)}_{k-1}$. The remainder of this proof is very similar to the 
  previous proof, and hence we give an outline of the argument and leave the details for the reader.  

  Let $\fs\in\bT_{k-1}$. We condition on $\{T_{k-1}=\fs\}$. Consider an up-move that adds the mass of a new leaf $n$ to
  $T^{\bullet(n-1)}_{k-1}$ at a randomly chosen edge $B\in \fs$ according to weights as in Definition \ref{defn:decupmove} for 
  $\alpha=1/2$. By Corollary \ref{cor:insertion_unlabel}, this is a step by the conditional distribution of $T^{\bullet(n)}_{k-1}$ 
  given $T^{\bullet(n-1)}_{k-1}$. By exchangeability, again, this leaf $n$ could be relabeled leaf $k$, and the total leaf mass of $T_n$ 
  can now be projected onto the tree spanned by leaves labeled by $[k]$. This will split the weight of edge $B$ according to a $\DirM$ 
  distribution as in the previous proof.
  Hence, the edge selection, mass insertion and splitting probabilities are as in Definition \ref{defn:resample} (iii)-(v).
\end{proof}

\begin{proof}[Proof of Proposition \ref{prop:kupdown}] We only prove the case of the alpha chain. The case of the uniform chain is 
  similar and is left to the reader. 

  Fix $\alpha\in (0,1)$ and 
  $\ft^\bullet = \left( \fs, (x_1, x_2, \ldots, x_k, (y_B)_{B\in \fs,\; \# B\ge 2}) \right) \in \decotree{k}{n}$. 
  %For simplicity of notation, for the rest of this proof, $q_n$ will denote $q_{n,\alpha}$, $P$ will denote $P^{(\alpha)}$, and 
  %$\Lambda$ will denote the conditional distribution $\Lambda^{(\alpha)}$. 
  Proposition \ref{prop:kupdown} claims the equality of two distributions on $\decotree{k}{n}$, one obtained by a transition of the 
  alpha decorated chain as defined in Definition \ref{downmove}, the other by transitions via the alpha chain. Specifically, let us 
  consider random variables that exhibit the latter transition, with the alpha chain transition further split into the down-move and the 
  up-move, as follows:
  \begin{itemize}
    \item an initial $n$-tree $T_n$, which is distributed according to $\Lambda(\ft^\bullet, \cdot)$;
    \item a leaf label $I$, uniformly distributed over $[n]$ and independent of $T_n$, that is initially selected for removal in the 
      down-move; then $T_{n-1}^\prime \in \tshape{n-1}$ is obtained from $T_n$ and $I$ by dropping $\widetilde{I}$ and 
      relabeling as in Definition \ref{defn:ntreeupdown};
    \item an edge $S$ of $T_{n-1}^\prime$ distributed as in the up-move of Definition \ref{defn:ntreeupdown}; then the tree
      $T_n^\prime=T_{n-1}^\prime+(S,n)\in \tshape{n}$ realises the up-move and we want to study the distribution of
      $\rho_k^{\bullet(n)}(T_n^\prime)$. 
  \end{itemize}
  To compare this distribution with the alpha decorated chain, we further define on the same probability space the common tree shape 
  $T_k^\prime=T_{n-1}^\prime\cap[k]$ of $\rho_k^{\bullet(n-1)}(T_{n-1}^\prime)$ and $\rho_k^{\bullet(n)}(T_n^\prime)$, and the edges 
  $\beta\in\fs$ and $\beta^\prime\in T_k^\prime$ to which $I$ and $S$ project, respectively. Let us introduce $\beta$ and 
  $\beta^\prime$ more formally. 
  
  Let $\pi$ be the projection map (see above Definition \ref{defn:collapse}) that takes leaves of $T_n$ to edges in $\fs$. Let 
  $\beta=\pi(I)$ denote the edge of $\fs$ on which $I$ projects. The conditional probability $\pP\left( \beta=B \mid T_n \right)$, for 
  any edge $B\in \fs$, is clearly $\mu(B)/n$. Note that this probability is independent of $T_n$, thus $\beta$ and $T_n$ are independent
  and $\pP(\beta=B)=\mu(B)/n$ has the same distribution as the edge selected in the transition of the alpha decorated chain. It
  therefore suffices to identify the conditional distribution of $\rho_k^{\bullet(n)}(T_n^\prime)$ given $\beta=B$ with the distribution 
  of the random move specified in (A), (B) or (C) of Definition \ref{downmove}, as the case may be for each $B\in\fs$.
  
  Let $\pi^\prime$ be the projection from $T_{n-1}^\prime$ to $\rho_k^{\bullet(n-1)}(T_{n-1}^\prime)$. By construction of 
  $T_n^\prime=T_{n-1}^\prime+(S,n)$, there is a unique edge $\beta^\prime\in T_k^\prime$ whose weight changes when passing from
  $\rho_k^{\bullet(n-1)}(T_{n-1}^\prime)$ to $\rho_k^{\bullet(n)}(T_{n}^\prime)$, and this weight increases by one. 
  Then, for any edge $B^\prime\in T_k^\prime$, the conditional probability $\pP\left( \beta'=B' \mid T_n,I\right)$ is proportional to 
  $\mu^\prime(B^\prime)-\alpha$ for external edges $B'$, or $\mu'(B')+\alpha$ for internal edges, where $\mu^\prime(B^\prime)$ is the weight of the edge $B^\prime$ in $\rho_k^{\bullet(n-1)}(T_{n-1}^\prime)$. In 
  particular, it is a function only of $\rho_k^{\bullet(n-1)}(T_{n-1}^\prime)$, as is required as the last step in all three cases
  (A), (B) and (C) of Definition \ref{downmove}.

\smallskip

\noindent\textbf{Case (A).} Consider an edge $B\in \fs$ such that either (i) $B$ is internal with $\mu(B)>0$ or (ii) $B$ is external and $\mu(B) >1$. 
  Let us study the effect of the alpha chain down-move on the decorated tree $\rho_k^{\bullet(n-1)}(T_{n-1}^\prime)$ when conditioning
  on $\{\beta=B\}$.  

  (i) First suppose $B$ is internal. If $I=i$, obviously $i>k$. Every leaf label $j$ such that $\pi(j)=B$ must also be larger than $k$. 
  In particular, $\wi \ge i > k$. Hence, once we swap and drop $\wi$, we reduce $y_B$ by one without changing any other weights on 
  $\ft^\bullet$. 

  (ii) Now suppose $B$ is an external edge $\{i\}$ such that $x_i>1$. Consider the set of leaves $j$ in $T_n$ such that $\pi(j)=\{i\}$. 
  Since $x_i >1$, there are at least two of them, including $i$ itself. All such $j$, $j\neq i$, must be at least $k+1$. Among them, 
  during the $n$-tree down-move, either some $j\neq i$ is removed, which is similar to the paragraph above, or the leaf $i$ itself is 
  chosen for removal. For the latter, let $\tilde{\imath}$ be the leaf that gets swapped with $i$ and is dropped in the down-move. 
  Consider the first two subtrees on the ancestral path from $i$ to the root in $T_n$.  Since $x_i > 1$, the leaf labels in at least one 
  of the subtrees is contained in $\pi^{-1}(\{i\})$, and therefore $\tilde{\imath} > k$. In fact, $\pi(\tilde{\imath})$ must be $\{i\}$. 
  To see this, note that, either both subtrees are contained in $\pi^{-1}(\{i\})$, in which case the conclusion is obvious, or, the 
  first subtree is in $\pi^{-1}(\{i\})$, while the second subtree is not. In the latter case, the second subtree must have a leaf $\le k$
  (consider a leaf label in the sibling of $\{i\}$ in $\fs$) while in the first subtree all leaves must be labeled $\ge k+1$. Thus 
  $\tilde{\imath}$, being the maximum of the two minimum labels in two subtrees, must belong to the first subtree and hence 
  $\tilde{\imath}\ge k+1$ and $\pi(\tilde{\imath})=\{i\}$. Therefore, after swapping and removing $\tilde{\imath}$ and relabeling higher 
  labeled leaves simply reduces $x_i$ by one and has no other effects on $\ft^\bullet$. 

  Thus, in both (i) and (ii), $\rho_k^{\bullet(n-1)}(T_{n-1}^\prime)$ has the same tree shape as $\ft^\bullet$ and all the same weights 
  except 
  %$\mu(B)$, which 
  on edge $B$, whose weight 
  %the one on edge $B$ 
  is reduced by one, as needed for the down-move in Definition \ref{downmove}. 
  The up-move probabilities have already been discussed.

\smallskip

\nin\tbf{Case (B).} Suppose $B=\{i\}\in \fs$ for some $i\in [k]$ such that $x_i=1$ and  \vspace{-0.2cm} $y_{\sparent{i}}>0$. We 
  condition on $\{\beta=B\}$.  
  Let $\{i\}'$ and $\sparent{i}$ be the labels of the sibling edge and the parent edge of $\{i\}$ in $\fs$, the tree 
  shape in $\ft^\bullet$. Consider the spinal decomposition (recall Section \ref{sec:prelim}) from leaf $\{i\}$ to the root in $T_n$. Let
 \[
  U:=\bigcup_{B\in \fs:\; B\subseteq \{i\}'}\pi^{-1}\left( B\right)\quad \text{and}\quad 
  V:=\pi^{-1}\left( \sparent{i}  \right).
 \]
 Since $x_i = 1$, $U$ is the set of labels in the first subtree on the ancestral line from $\{i\}$ to the root, while the labels in the second subtree comprise a non-empty subset $W\subseteq V$. Since $U$ contains some leaf label at most $k$, whereas every leaf in $V$ (and thus $W$) has label at least $k+1$, we conclude that $\tilde\imath$ comes from $W$.
 
 In fact, the above $U$ and $W$ are edges in $T_n$: the sibling and uncle of $\{i\}$. Let $U'$ and $W'$ denote the resulting label sets after replacing $\tilde\imath$ with $i$ in $W$ and reducing all labels greater than $\tilde\imath$ by one. The step of dropping $\tilde\imath$ in the down-move makes $U'$ and $W'$ siblings in $T'_{n-1}$. Set $M_i := \#W \le \#V = y_{\sparent{i}}$ in $\ft^{\bullet}$. 
 All weights and labels are the same in $\rho_k^{\bullet(n-1)}(T'_{n-1})$ as in $\ft^{\bullet}$, with two exceptions: weight $x_i=1$ is replaced by $M_i$ and weight $y_{\sparent{i}}$ is reduced by $M_i$. Therefore, the law of $\rho_k^{\bullet(n-1)}(T_{n-1}^\prime)$ under $\pP( \,\cdot\,|\,\beta=B)$ is specified by the conditional law of $M_i$. Since $T_n$ has law $q_{n,\alpha}(\,\cdot\,|\,\rho_k^{\bullet(n)}=\ft^\bullet)$ and is independent of $\beta$, it follows from Lemmas \ref{lem:spatialMarkov} and \ref{lem:topbeadkernel} that the law of $M_i$ is given by the stochastic kernel $\delta_{\alpha}\big(y_{\sparent{i}}:\cdot\big)$ of \eqref{eq:returnalpha}.
 %Then $M_i$ is the weight on edge $\{i\}$ in $\rho_k^{\bullet (n-1)}(T'_{n-1})$, while the weight on its parent, $\overset{\longleftarrow}{\{i\}}$, gets reduced by $M_i$. All other weights and labels are the same in $\rho_k^{\bullet(n-1)}(T'_{n-1})$ as in $\ft^{\bullet}$.
  
  %Recall that $T_n$ has law $q_{n,\alpha}(\,\cdot\,|\,\rho_k^{\bullet(n)}=\ft^\bullet)$ and is independent of $\beta$; i.e.\ we think of $T_n$ as being the $(n-k)^{\text{th}}$ step in Ford's alpha tree growth, started from $\fs$ and conditioned to have certain numbers of leaves, specified by the edge weights, grow in each component. Under this process, for any internal edge $B\in\fs$, the sequence of subtree masses that grow out along this edge evolves as an ordered $(\alpha, \alpha)$-Chinese restaurant process \cite{PW09}, thus forming the $(\alpha,\alpha)$-regenerative composition structure of \cite[Section 8.4]{gp}. Thus, $M_i$, which is first of these masses along $\overset{\longleftarrow}{\{i\}}$, has law given by the stochastic kernel $\delta\big(y_{\overset{\longleftarrow}{\{i\}}}:\cdot\big)$, which we have quoted from \cite{gp} in our equation \eqref{eq:returnalpha}. 

  Again, the up-move probabilities have been discussed, which completes this case.

\smallskip

\nin\tbf{Case (C).}  Suppose $B=\{i\}$ with $x_i=1$ and $y_{\sparent{i}}=0$
  in $\ft^\bullet$. We condition on $\{\beta=B\}$.  
  Since $x_i=1$ and $y_{\sparent{i}}=0$, each of the first two subtrees on the ancestral line of $\{i\}$ in $T_n$ 
  must have a leaf label at most $k$. Hence, the $n$-tree down-move drops $\wi\le k$ and shifts labels $(\wi+1, \ldots, n)$ down to
  by one. This gives us $T_{n-1}^\prime$. However, unlike the previous cases, the tree shape of $\rho_k^{\bullet(n-1)}(T_{n-1}^\prime)$ is not 
  deterministic under $\pP(\,\cdot\,|\,\beta=B)$. This is because leaf $k+1$ was relabeled as $k$, so the weight on $\{k\}$ in $\rho^{\bullet(n-1)}_{k}(T'_{n-1})$ corresponds to part of some other, larger weight in $\ft^{\bullet}$, according to where $k+1$ was placed when generating $T_n$. 

  Now note that relabeling $(k+1, \ldots, n)$ does not affect the weights on the decorated trees. Thus 
  $\fv^\bullet=\rho^{\bullet(n-1)}_{k-1}\left( T_{n-1}^\prime \right)$ is deterministic under $\pP(\,\cdot\,|\,\beta=B)$. To be consistent 
  with Definition \ref{downmove}, it is enough to argue (due to Lemma \ref{ins}) that the distribution of $T_{n-1}^\prime$ under 
  $\pP(\,\cdot\,|\,\beta=B)$ is $q_{n-1,\alpha}(\,\cdot\,|\,\rho^{\bullet(n-1)}_{k-1}=\fv^\bullet)$. However, this is very similar to the proof of 
  Lemma \ref{lem:downinv} since $T_n$ is distributed as $q_{n,\alpha}(\,\cdot\,|\,\rho^{\bullet(n)}_k=\ft^\bullet)$. In fact, if one repeats 
  the proof of Lemma \ref{lem:downinv} for decorated $k$-trees, instead of $n$-trees, we reach our conclusion. The up-move is as usual. 
  This completes the proof. 
\end{proof}

Before we end this section, let us extend the result in Corollary \ref{cor:resampdist}. 

\begin{cor}\label{cor:firstresmpdist}
Consider an alpha decorated chain (Definition \ref{downmove}) running in stationarity. Let $\widetilde{I}$ be the first leaf label in $[k]$ that gets dropped in case (C). Then, the distribution of $\widetilde{I}$ is given by \vspace{-0.3cm}
\eq\label{eq:distwiktree}
\pP\left(  \widetilde{I}=\wi \right)=\begin{dcases}
\frac{1}{k(k-1-\alpha)}, & \text{if $\wi=2$.}\\
\frac{2\wi-2-\alpha}{k(k-1-\alpha)}, & \text{if $3\le \wi \le k$}.
\end{dcases}
\en
For $\alpha=1/2$ this is also the distribution of the first resampled leaf label in the uniform decorated chain (Definition \ref{defn:simpdownup}). 
\end{cor}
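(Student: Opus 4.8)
The plan is to transfer the statement to the alpha $n$-tree chain via Proposition \ref{prop:kupdown} and then quote Corollary \ref{cor:resampdist} with $n$ replaced by $k$. First I would realize the stationary alpha decorated chain as $(\rho^{\bullet(n)}_k(T(j)))_{j\ge 0}$ for a stationary alpha $n$-tree chain $(T(j))_{j\ge 0}$ (Definition \ref{defn:ntreeupdown}); this is legitimate because the $\rho^{\bullet(n)}_k$-pushforward of $q_{n,\alpha}$ is stationary for $\Lambda^{(\alpha)}P^{(\alpha)}\rho^{\bullet(n)}_k$, which is the decorated transition kernel by Proposition \ref{prop:kupdown}. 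Then I would record the dictionary already implicit in the proof of Proposition \ref{prop:kupdown}: writing $\widetilde{I}(j)$ for the label removed in the $n$-tree down-move at step $j$, step $j$ of the decorated chain falls in case (C) if and only if $\widetilde{I}(j)\le k$, and then the label dropped by the decorated chain equals $\widetilde{I}(j)$. (If $\widetilde{I}(j)\le k$, the selected leaf $i$ lies in $[k]$, and $\widetilde{I}(j)\le k$ forces $x_i=1$, $y_{\sparent{i}}=0$ in $\rho^{\bullet(n)}_k(T(j))$, so the first two spinal subtrees of $i$ in $T(j)$ and in the $k$-tree shape carry the same minimal labels; conversely, in cases (A) and (B) the removed label exceeds $k$, as established in the proof of Proposition \ref{prop:kupdown}.) Hence the first leaf label in $[k]$ dropped in case (C) is $\widetilde{I}(J)$ with $J:=\inf\{j\ge0:\widetilde{I}(j)\le k\}$, finite a.s.\ by irreducibility.

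\textbf{The stationary conditional law.} The main computational input is that, conditionally on step $0$ being a case-(C) step, the $k$-tree shape $\fs$ and the selected leaf $I(0)$ are independent, with $\fs\sim q_{k,\alpha}$ and $I(0)$ uniform on $[k]$. By Corollary \ref{cor2} the stationary law of $\rho^{\bullet(n)}_k(T(0))$ has shape $\fs\sim q_{k,\alpha}$ and, given $\fs$, weights governed by $\DirM^\alpha_{2k-1}(n-k)$, so that for a leaf $i\in[k]$ one has $\pP(x_i=1,\ y_{\sparent{i}}=0\mid\fs)=\pP_{\DirM^\alpha_{2k-1}(n-k)}(J_{\{i\}}=0,\ J_{\sparent{i}}=0)$. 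Aggregating the underlying Dirichlet weights, $1-p_{\{i\}}-p_{\sparent{i}}$ has a $\mathrm{Beta}(k-1-\alpha,1)$ distribution, because the two relevant parameters $1-\alpha$ (external edge $\{i\}$) and $\alpha$ (internal edge $\sparent{i}$) always sum to $1$; hence that probability equals $C(n,k,\alpha):=\Gamma(n-1-\alpha)\Gamma(k-\alpha)/\big(\Gamma(k-1-\alpha)\Gamma(n-\alpha)\big)$, independent of $i$ and of $\fs$. Since leaf $i$ is then selected with probability $x_i/n=1/n$, we get $\pP(\text{step $0$ is case (C)},\ \fs,\ I(0)=i)=\frac1n q_{k,\alpha}(\fs)\,C$, and normalizing over $i\in[k]$ and over $\fs$ yields the asserted conditional law. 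Together with the dictionary above and the proof of Corollary \ref{cor:resampdist} read with $n$ replaced by $k$ — which computes precisely the law of $\max\{i,a,b\}$ when $i$ is uniform on $[k]$ and the shape is $q_{k,\alpha}$-distributed — this shows that the label dropped at step $0$, conditioned on step $0$ being case (C), has distribution \eqref{eq:distwiktree}.

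\textbf{From step $0$ to the first case-(C) step.} The remaining point, which I expect to be the main obstacle, is that $\widetilde{I}(J)$ has the same law as the label dropped at step $0$ conditioned on step $0$ being case (C), even though $J$ is a random time correlated with $T(J)$. I would handle this by writing $\pP(\widetilde{I}(J)=\wi)=\sum_{j\ge0}\pP(J=j,\widetilde{I}(j)=\wi)$ and exploiting that at each step the uniform leaf $I(j)$ is drawn afresh, independently of $T(j)$ and of $\{J\ge j\}$, and that cases (A) and (B) leave the $k$-tree shape unchanged, so the shape is constant up to step $J$. This reduces $\pP(\widetilde{I}(J)=\wi)$ to $\frac1n\sum_{\fs,i:\max\{i,a,b\}=\wi}\pE[\mathbf{1}\{\fs_0=\fs\}N_i]$, where $N_i$ counts the steps $j\le J$ at which leaf $i$ is eligible; a Kac-type identity (at each step the case-(C) hazard given the current state equals $(\#\text{eligible leaves})/n$, and $I(j)$ is fresh) gives $\pE\big[\mathbf{1}\{\fs_0=\fs\}\sum_i N_i\big]=n\,q_{k,\alpha}(\fs)$, so it suffices to verify that $\pE[\mathbf{1}\{\fs_0=\fs\}N_i]$ does not depend on $i$. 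This $i$-symmetry is what the exchangeability in the P\'olya-urn description of the decoration dynamics (Lemma \ref{lem:lempolya}), combined with the $i$-independence of $\pP(x_i=1,y_{\sparent{i}}=0\mid\fs)$ used above, is meant to deliver; carrying it out carefully is where the work lies. Finally, taking $\alpha=\tfrac12$ recovers the uniform decorated chain of Definition \ref{defn:simpdownup}, so the same formula governs the first resampled label there.
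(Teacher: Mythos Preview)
Your approach diverges from the paper's in how you pass from the single-step conditional law to the first case-(C) time. The paper works directly with the stationary alpha decorated chain (Markov by Definition~\ref{downmove}) and uses a symmetry/coupling argument: transposing two external leaf labels at time $0$ and invoking the exchangeability of the first $k$ coordinates of $\DirM^\alpha_{2k-1}(n-k)$ from Corollary~\ref{cor2} yields a coupling of two decorated chains that make the same moves on the unlabeled tree up to time $\tau$ (cases (A) and (B) do not read labels). This delivers at once that the selected leaf $I$ at time $\tau$ is uniform on $[k]$ and independent of the shape $\fs$, after which one simply reruns the computation of Corollary~\ref{cor:resampdist} with $n$ replaced by $k$. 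No Kac identity, no occupation-time bookkeeping.

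Your reduction has a genuine gap. You realize the stationary decorated chain as the projection $(\rho^{\bullet(n)}_k(T(j)))_{j\ge0}$ of a stationary alpha $n$-tree chain, but Proposition~\ref{prop:kupdown} only identifies the one-step kernel as $\Lambda^{(\alpha)}P^{(\alpha)}\rho^{\bullet(n)}_k$; it does not show the projected process is Markov with that kernel. That conclusion requires the intertwining and Kemeny--Snell arguments of Theorem~\ref{thm:runningalongbottom}, which the paper proves only for $\alpha=1/2$. So for $\alpha\neq1/2$ your multi-step analysis in the $n$-tree chain need not compute the quantity the corollary asks about. Separately, your Kac-type extension is left at the level of ``carrying it out carefully is where the work lies'': the step you isolate, that $\pE[\mathbf{1}\{\fs_0=\fs\}N_i]$ is independent of $i$, is precisely the uniformity the paper gets for free from its coupling, and it does \emph{not} follow from the P\'olya-urn exchangeability of Lemma~\ref{lem:lempolya} alone, since case (B) ties each leaf $i$ to its own parent edge $\sparent{i}$, which varies structurally across leaves of a fixed $\fs$. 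Your step-$0$ Beta computation is correct and pleasant, but the missing symmetry at time $\tau$ is exactly what the paper's label-permutation coupling supplies.
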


\begin{proof}
Let $(T(r))_{r\ge 0}$ be an alpha chain on $\tshape{n}$ running in stationarity. As before, let $(T^{\bullet(n)}_k(r))_{r\ge 0}$ denote 
the projected decorated chain. Let $\tau$ denote the first time $r$ such that a label in $[k]$ is dropped. Note that $\tau$ is a 
stopping time with respect to the natural filtration of $(T^{\bullet(n)}_k(r))_{r\ge 0}$. 

Let $\fs$ denote the $k$-tree shape of $T^{\bullet(n)}_k(0)$. By our assumption of stationarity, $\fs$ is distributed according to 
$q_{k,\alpha}$ over $\tshape{k}$. By definition of $\tau$, each of the decorated trees $T^{\bullet(n)}_k(r)$, $0\le r\le \tau-1$, has 
the same trees shape $\fs$. At time $\tau$, let $I\in [k]$ denote the random leaf label that is selected for removal. That is, $I=i$ is 
the label satisfying case (C) in Definition \ref{defn:simpdownup}. The statement of the corollary will follow by the exact same proof as 
for Corollary \ref{cor:resampdist} if we show that $I$ is uniformly distributed over $[k]$, independent of $\fs$. 

However, this follows from symmetry of the conditional law $T^{\bullet(n)}_k(0)$, given the tree shape $\fs$, as described in Corollary 
\ref{cor2}. It shows the following exchangeability property: suppose we relabel the leaves of $\fs$ by a permutation $\sigma$ of $[k]$. For simplicity assume that $\sigma$ transposes leaf labels $1$ and $2$. Switch the labels of the leaves $1$ and $2$ in $T^{\bullet(n)}_k(0)$ to get a new decorated $k$-tree. The tree-shape of this new tree is identical to $\fs$, except for the switching of leaf labels $1$ and $2$. However, the conditional law of the both the decorated $k$-trees, given their tree shapes, is identical and given by Corollary \ref{cor2}. This follows from the exchangeability of the first $k$ coordinates of $\DirM^{\alpha}_{2k-1}(n-k)$, given the last $(k-1)$ coordinates. That is, if we generate a vector from $\DirM^{\alpha}_{2k-1}(n-k)$, switch coordinates $1$ and $2$, while leaving the rest of the coordinates unchanged, the distribution remains unchanged.

Now, this initial coupling of the two decorated trees extends to a natural coupling of the alpha decorated chains where we perform the same moves on the unlabeled trees but keep the swapped leaf labels $1$ and $2$. Thus, we have a measure-preserving bijection between the sample paths in the events $\{I=1\}$ and $\{I=2\}$, given $\fs$. Hence the probability of the two events, given $\fs$, must be the same. Replacing $2$ by any other label $\wi\in [k]$ shows the conditional uniform distribution of $I$. Hence, $I$ and $\fs$ are independent and the proof is complete.
\end{proof}

\section{Consistency in stationarity for the uniform down-up chains} For the rest of the paper we consider the uniform chain of Definition \ref{defn:simptreeupdown} and the uniform decorated chain of Definition \ref{defn:simpdownup}. Our main result is
Theorem \ref{thm projectedchain} of the Introduction. With the preparations done so far, we restate it here, as follows. 

\begin{thm}[Consistency in stationarity]\label{thm:runningalongbottom} 
  Fix $n\ge 2$. Let $(T(j))_{j\ge 0}$ be a uniform chain running in stationarity. For $1\le k \le n-1$, let 
  \[
  T^{\bullet (n)}_k(j) = \rho_{k}^{\bullet(n)}(T(j)), \quad j\ge 0,
  \]
  denote the corresponding sequence of decorated $k$-trees obtained by projections. Then each $(T_k^{\bullet(n)}(j))_{j\ge 0}$, is a 
  uniform decorated chain as described in Definition \ref{defn:simpdownup}, also running in stationarity.   
\end{thm}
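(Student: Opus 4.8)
The plan is to deduce Theorem \ref{thm:runningalongbottom} by combining Proposition \ref{prop:kupdown} with an intertwining-type argument, exactly the strategy sketched in the introduction. The starting point is the identity $Q=\Lambda P\rho_k^{\bullet(n)}$ from Proposition \ref{prop:kupdown}, where $P$ is the transition kernel of the uniform chain, $\Lambda(\ft^\bullet,\cdot)=q_{n,1/2}(\,\cdot\,|\,\rho_k^{\bullet(n)}=\ft^\bullet)$ is the conditional law in stationarity, and $Q$ is the kernel of the uniform decorated chain of Definition \ref{defn:simpdownup}. The general principle (see the discussion following \eqref{eq:Mkvfn}) is that $Q$ is the kernel ``observed'' by watching $\rho_k^{\bullet(n)}(T(j))$ \emph{if} one is allowed to re-randomize the $n$-tree according to $\Lambda$ at every step. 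The content of the theorem is that, in stationarity, one does \emph{not} need to re-randomize: the genuine conditional law of $T(j)$ given the whole past of $(\rho_k^{\bullet(n)}(T(i)))_{i\le j}$ is already $\Lambda(\rho_k^{\bullet(n)}(T(j)),\cdot)$. So the heart of the proof is a \emph{propagation-of-conditional-law} statement.

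First I would set up the inductive claim: if $T(j)$ has conditional law $\Lambda(T_k^{\bullet(n)}(j),\cdot)$ given $\mathcal F_j:=\sigma(T_k^{\bullet(n)}(0),\dots,T_k^{\bullet(n)}(j))$, then the same holds at time $j+1$, and moreover $T_k^{\bullet(n)}(j+1)$ given $\mathcal F_j$ has law $Q(T_k^{\bullet(n)}(j),\cdot)$. The base case $j=0$ is immediate from stationarity, since $T(0)\sim q_{n,1/2}$ gives $T(0)\giv T_k^{\bullet(n)}(0)\sim\Lambda(T_k^{\bullet(n)}(0),\cdot)$ by definition of $\Lambda$. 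For the inductive step the key is a \emph{Markov-branching / Dynkin-criterion} observation: conditionally on $T_k^{\bullet(n)}(j)=\ft^\bullet$ and on the move type and selected edge/label chosen in Definition \ref{defn:simpdownup}, the pair $(T(j+1),T_k^{\bullet(n)}(j+1))$ has a conditional law that depends only on $\ft^\bullet$ and the selection — this is precisely what the case analysis (A), (B), (C) in the proof of Proposition \ref{prop:kupdown} establishes, via Lemmas \ref{lem:spatialMarkov}, \ref{lem:topbeadkernel}, \ref{ins}, \ref{lem:revamp} and the de Finetti/Pólya-urn representation of Corollary \ref{cor2}. In particular the internal structures on the edges of $\fs$ that are untouched by the move are \emph{resampled from their stationary conditional laws} inside $P$ itself (the up-move regrows leaf $\widetilde I$ from the alpha-growth kernel; the resampling step in case (C) regrows a whole subtree from $\widetilde q$; the Pólya urn is exchangeable), so no information about them leaks into $\mathcal F_{j+1}$ beyond what the new decorated tree records. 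Formally I would check that for every bounded $h$ on $\bT_{[n]}$ and every bounded $\phi$ on the path space of decorated trees,
\[
\bE\big[h(T(j+1))\,\phi(T_k^{\bullet(n)}(0),\dots,T_k^{\bullet(n)}(j+1))\big]=\bE\big[(\Lambda h)(T_k^{\bullet(n)}(j+1))\,\phi(\cdots)\big],
\]
using the inductive hypothesis to replace $T(j)$ by a $\Lambda$-sample and then invoking the edge-by-edge conditional-independence facts to integrate out the untouched internal structures.

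The main obstacle I expect is verifying that the conditional law of $T(j+1)$ given $\mathcal F_{j+1}$ is \emph{exactly} $\Lambda(T_k^{\bullet(n)}(j+1),\cdot)$ and not merely some other kernel compatible with the decorated projection — i.e., that the down-up move does not destroy the ``product of stationary internal structures given the collapsed skeleton'' form of $\Lambda$ described in Lemma \ref{lem:spatialMarkov}. Concretely, in case (B) one must check that after splitting $y_{\sparent i}$ via the decrement kernel $\delta_\alpha(y_{\sparent i}:\cdot)$ and regrowing, the internal structures on $\{i\}$, on $\sparent i$, and on all other edges are again independent with the correct laws $q_{M_i,1/2}$ and $\widetilde q_{\,\cdot\,,1/2}$; this is exactly the regenerative property in Lemma \ref{lem:topbeadkernel}, so the work is in bookkeeping rather than in new ideas. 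The other delicate point is case (C), where the tree shape itself changes and one must invoke the decorated-tree analogue of Lemma \ref{lem:downinv} (as done in the proof of Proposition \ref{prop:kupdown}) to see that the conditional law of the shrunken tree is $q_{n-1,1/2}(\,\cdot\,|\,\rho^{\bullet(n-1)}_{k-1}=\fv^\bullet)$, after which the relabeling/insertion step of Lemma \ref{ins} restores the correct conditional law at level $k$. Once the inductive step is in place, the theorem follows: $(T_k^{\bullet(n)}(j))_{j\ge0}$ has one-step kernel $Q$ at every step, hence is a uniform decorated chain, and it is stationary because $\rho_k^{\bullet(n)}$ pushes the stationary law $q_{n,1/2}$ forward to the stationary law of $Q$ (which one may also note directly, or read off from the $j=0$ marginal combined with the chain being a genuine Markov chain started from that marginal).
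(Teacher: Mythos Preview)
Your inductive claim---that the conditional law of $T(j)$ given $\mathcal F_j=\sigma(T_k^{\bullet(n)}(0),\dots,T_k^{\bullet(n)}(j))$ is $\Lambda(T_k^{\bullet(n)}(j),\cdot)$---is exactly the Rogers--Pitman intertwining criterion $\Lambda P=Q\Lambda$ applied directly between $T(\cdot)$ and $T_k^{\bullet(n)}(\cdot)$, and the paper states explicitly (just before Proposition \ref{lem:kscollapsed}) that this criterion \emph{fails}. The gap in your argument is that you verify only the conditional laws of the \emph{internal structures} $\int(V_B)$ after the move (which are indeed correct, by Lemma \ref{lem:topbeadkernel}), but you never check the \emph{label sets} $\pi^{-1}(B)$. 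These are part of what $\Lambda$ specifies, and they are not preserved. Concretely, in Case (B) the label $\wi$ that is removed and re-inserted is $\min W$, the minimum of the label set of the uncle subtree; hence $W\setminus\{\wi\}$, which becomes the new label set on edge $\{i\}$ (minus $i$ itself), is biased towards large labels. For a small counterexample take $n=8$, $k=2$, $\ft^\bullet=(x_1,x_2,y_{\{1,2\}})=(1,4,3)$ and $\fv^\bullet=(3,5,0)$: the only transition from $\ft^\bullet$ to $\fv^\bullet$ is $I=1$, $M_1=3$, up-move on $\{2\}$, and in this transition the label $3$ can \emph{never} land on edge $\{1\}$, whereas under $\Lambda(\fv^\bullet,\cdot)$ it does so with probability $1/3$.

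The paper circumvents this by inserting the intermediate \emph{collapsed} tree $T_k^{\star(n)}$, which records the label sets $\pi^{-1}(B)$ rather than just their sizes. Intertwining \emph{does} hold between $T(\cdot)$ and $T_k^{\star(n)}(\cdot)$ (Proposition \ref{lem:intertwin}), precisely because once the label sets are observed, only the internal structures remain to be checked, and those are handled by Lemmas \ref{lem:spatialMarkov} and \ref{lem:topbeadkernel} as you outlined. Then the passage from $T_k^{\star(n)}(\cdot)$ to $T_k^{\bullet(n)}(\cdot)$ is by the Kemeny--Snell criterion (Proposition \ref{lem:kscollapsed}): the induced kernel on collapsed trees depends on the label sets only through their sizes, so forgetting labels preserves Markovianity. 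Your case analysis and the lemmas you cite are the right ingredients, but they must be deployed at the collapsed-tree level, not directly at the decorated-tree level.
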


%The transition probabilities for $(T_k^{\bullet(n)}(j))_{j\ge 0}$ are described in Definition \ref{defn:simpdownup}. 
Recall the concept of collapsed trees from Definition \ref{defn:collapse}. Define the process
\[
T^{\star(n)}_k(j):=\rho_k^{\star(n)}\left( T(j)  \right), \quad j\ge 0.
\]
Thus, on the same filtered probability space that supports $(T(j))_{j\ge 0}$ we have three stochastic processes 
$T(\cdot):=(T(j))_{j\ge 0}$, $T^{\star(n)}_k(\cdot):=(T^{\star(n)}_k(j))_{j\ge 0}$ and 
$T^{\bullet(n)}_k(\cdot):=(T_k^{\bullet(n)}(j))_{j\ge 0}$. We will show that, when the first is running in stationarity, all three 
processes are Markov chains by themselves, also running in stationarity.  

Recall the set-up described at the beginning of Section \ref{Sect23}. Suppose we have a Markov chain 
$\mathcal{X}(\cdot):=(\mathcal{X}(j))_{j\ge 0}$, with a finite state space $\bX$, a transition kernel $K(x, \cdot)$, and an initial probability 
distribution $\mu$. Let $\bY$ be another finite state space and let $g\colon\bX \rightarrow \bY$ be a given surjective function. We ask 
the question: when is $\mathcal{Y}(j):=g\left( \mathcal{X}(j)  \right)$, $j\ge 0$, a Markov chain?

There have been a number sufficient conditions proposed for this problem. We will require two of them: (i) the Kemeny--Snell 
criterion and (ii) the intertwining criterion due to Rogers and Pitman. We 
describe both these conditions below. The proof of Theorem \ref{thm:runningalongbottom} will consist of 
two key steps.  The first step is to show that the pair $\mcal{X}(\cdot)=T^{\star(n)}_k(\cdot)$, $\mcal{Y}(\cdot):=T^{\bullet(n)}_k(\cdot)$ 
satisfies the Kemeny--Snell criterion, where $T^{\star(n)}_k(\cdot)$ is a Markov chain with a transition kernel induced from $T(\cdot)$ as described in \eqref{eq:Mkvfn}. The next step is to show that the pair $\mcal{X}(\cdot):=T(\cdot)$, $\mcal{Y}(\cdot):=T^{\star(n)}_k(\cdot)$ satisfies the Rogers--Pitman intertwining criterion. Combining the two gives us Theorem  \ref{thm:runningalongbottom}. We are forced to take this two-step approach since it can be seen from explicit calculation and examples that neither intertwining nor Kemeny--Snell hold between the chains $\mcal{X}(\cdot):=T(\cdot)$ and $\mcal{Y}(\cdot):=T^{\bullet(n)}_k(\cdot)$.

Let us now describe the Kemeny--Snell \cite{KS60} criterion. Consider the Markov transition kernel from $\bX$ to $\bY$ given by
\[
Q_g(x,\{y\}) := K\left(x, g^{-1}(\{y\})\right), \quad y \in \bY. 
\] 
Suppose that the kernel $Q_g(x,\cdot)$ is a function of $g(x)$, i.e., for any $y\in \bY$,
\[
Q_g(x_1, y)= Q_g(x_2, y),\quad \text{if}\; g(x_1)=g(x_2) \quad (\text{Kemeny--Snell criterion}).
\]
For any pair of $y_1, y_2 \in \bY$, denote the common value of $Q_g\left(  x_1,  y_2 \right)$, for any $x_1 \in g^{-1}(\{y_1\})$, by $Q\left( y_1, y_2 \right)$. The following result is in \cite[Theorem 6.3.2]{KS60}.

\begin{lemma}\label{lem:KSlemma}If the Kemeny--Snell criterion holds, then $\mathcal{Y}(\cdot)$ is a Markov chain with transition kernel $Q(\cdot, \cdot)$ and an initial distribution given by the push-forward of the measure $\mu$ by the function $g$. In particular, $\mathcal{Y}(\cdot)$ is running in stationarity if $\mathcal{X}(\cdot)$ is running in stationarity. 
\end{lemma}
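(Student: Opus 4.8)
The plan is to verify the one-step Markov property of $\mathcal{Y}(\cdot)$ directly from the definition, exploiting that $\bX$ and $\bY$ are finite so that all conditionings can be handled as elementary conditional probabilities on events. Fix $j\ge 0$ and states $y_0,\dots,y_{j+1}\in\bY$ with $\pP(\mathcal{Y}(0)=y_0,\dots,\mathcal{Y}(j)=y_j)>0$, and write $E_j:=\{\mathcal{Y}(0)=y_0,\dots,\mathcal{Y}(j)=y_j\}$. The key point is that each $\{\mathcal{Y}(i)=y_i\}$ equals the pullback event $\{\mathcal{X}(i)\in g^{-1}(\{y_i\})\}$, so $E_j$ lies in $\sigma(\mathcal{X}(0),\dots,\mathcal{X}(j))$. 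First I would decompose $\pP(\mathcal{Y}(j+1)=y_{j+1},\,E_j)$ over the value $x$ of $\mathcal{X}(j)$, which ranges over $g^{-1}(\{y_j\})$; then apply the time-homogeneous Markov property of $\mathcal{X}(\cdot)$ to pull out the one-step transition probability $K(x,g^{-1}(\{y_{j+1}\}))=Q_g(x,y_{j+1})$; and finally invoke the Kemeny--Snell hypothesis, which says that $Q_g(x,y_{j+1})$ takes the common value $Q(y_j,y_{j+1})$ for every $x\in g^{-1}(\{y_j\})$. Pulling this constant out of the sum recombines what remains into $\pP(E_j)$, so dividing yields
\[
\pP\big(\mathcal{Y}(j+1)=y_{j+1}\mid\mathcal{Y}(0)=y_0,\dots,\mathcal{Y}(j)=y_j\big)=Q(y_j,y_{j+1}),
\]
which depends on the past only through $y_j$. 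This is exactly the statement that $\mathcal{Y}(\cdot)$ is a Markov chain with transition kernel $Q$.

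For the initial law, $\pP(\mathcal{Y}(0)=y)=\pP(\mathcal{X}(0)\in g^{-1}(\{y\}))=\mu(g^{-1}(\{y\}))$ is by definition the push-forward $g_{\ast}\mu$. For the stationarity claim, if $\mathcal{X}(\cdot)$ runs in stationarity then $\mathcal{X}(j)$ has law $\mu$ for all $j$, hence $\mathcal{Y}(j)=g(\mathcal{X}(j))$ has law $g_{\ast}\mu$ for all $j$; since $\mathcal{Y}(\cdot)$ is Markov with kernel $Q$ by the previous paragraph, comparing the laws at times $0$ and $1$ shows $g_{\ast}\mu=(g_{\ast}\mu)Q$, so $g_{\ast}\mu$ is $Q$-invariant and $\mathcal{Y}(\cdot)$ runs in stationarity.

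This is the classical lumpability computation of Kemeny and Snell, so I do not expect any genuine obstacle; the only subtlety is the order of operations in the conditioning step. One must apply the Markov property of $\mathcal{X}(\cdot)$ before using the Kemeny--Snell hypothesis --- equivalently, first compute $\pP(\mathcal{Y}(j+1)=y_{j+1}\mid\mathcal{X}(0),\dots,\mathcal{X}(j))$, observe that the hypothesis makes it a function of $\mathcal{Y}(j)$ alone, and only then project it onto $\sigma(\mathcal{Y}(0),\dots,\mathcal{Y}(j))$ using the tower property. Degenerate conditionings on null events cause no trouble because all state spaces are finite.
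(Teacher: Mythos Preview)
Your proof is correct and is exactly the classical lumpability argument. The paper does not give its own proof of this lemma; it simply cites \cite[Theorem 6.3.2]{KS60}, and what you have written is essentially the proof one finds there.
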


Consider again the set-up at the beginning of Section \ref{Sect23}. Take the Markov chain $\mcal{X}$ to be the down-up chain $T(\cdot)$ and the function $g=\rho^{\star(n)}_k$. Then, by \eqref{eq:Mkvfn}, there is a natural transition probability kernel on the set of collapsed trees $\bT^{\star(n)}_{[k]}$. We now take $\bX=\bT^{\star(n)}_{[k]}$ and $K(\cdot, \cdot)$ to be this induced probability kernel. Redefine $g$ to denote the projection map that takes a collapsed tree $\ft^{\star}$ and replaces each $\pi^{-1}(B)$ by the mass $\mu(B)=\#\pi^{-1}(B)$ to obtain the decorated tree with masses $\ft^\bullet$. 

\begin{prop}\label{lem:kscollapsed}
If we take $\mcal{X}(j)=T^{\star(n)}_k(j)$, $j\ge 0$, with transition kernel $K$ and $\mcal{Y}(j)=T_k^{\bullet(n)}(j)$, then the Kemeny--Snell criterion holds.  
\end{prop}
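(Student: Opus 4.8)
The plan is to unfold the definitions of $K$ and $g$ and then reduce to the case analysis already carried out in the proof of Proposition~\ref{prop:kupdown}. By the construction around \eqref{eq:Mkvfn}, for a collapsed tree $\ft^\star$ with tree shape $\fs$ and mass decoration $\ft^\bullet:=g(\ft^\star)$ and for any target $\fu^\bullet\in\bT^{\bullet(n)}_{[k]}$, we have $Q_g(\ft^\star,\{\fu^\bullet\}) = K\big(\ft^\star,g^{-1}(\{\fu^\bullet\})\big) = \pP\big(\rho^{\bullet(n)}_k(T_n') = \fu^\bullet\big)$, where $T_n\sim q_{n,1/2}(\,\cdot\,|\,\rho^{\star(n)}_k = \ft^\star)$ and $T_n'$ is obtained from $T_n$ by one transition of the uniform chain of Definition~\ref{defn:simptreeupdown}. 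So it suffices to show that this probability is a function of $\ft^\bullet$ alone.

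First I would describe the conditional law $q_{n,1/2}(\,\cdot\,|\,\rho^{\star(n)}_k = \ft^\star)$ via Lemma~\ref{lem:spatialMarkov}: with $\alpha=\tfrac12$ (so $\widetilde q_{\cdot,1/2} = q_{\cdot,1/2}$), a sample $T_n$ is obtained by fixing the tree shape $\fs$ and the label sets $(\pi^{-1}(B))_{B\in\fs}$ of $\ft^\star$ and then filling each unranked internal structure $V_B$ with an independent copy of $q_{\mu(B),1/2}$ ($B$ external) or $q_{\mu(B)+1,1/2}$ ($B$ internal), with leaves relabeled by $\pi^{-1}(B)$. The crucial observation is that the \emph{only} way this law differs from $q_{n,1/2}(\,\cdot\,|\,\rho^{\bullet(n)}_k = \ft^\bullet)$ is that the label sets are now deterministic instead of random; the conditional law of the internal structures, given the label sets, is the same in both cases (and in particular still genuinely random), because by Lemma~\ref{lem:spatialMarkov} the internal structures are independent of the label sets.

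Next I would run the case decomposition of the proof of Proposition~\ref{prop:kupdown}. Write $I\in[n]$ for the leaf chosen in the down-move and $\beta:=\pi(I)\in\fs$ for the edge it projects onto; as there, $\pP(\beta = B\mid T_n) = \mu(B)/n$ for every $B\in\fs$, independently of $T_n$, which is a function of $\ft^\bullet$ and matches the selection rule of Definition~\ref{defn:simpdownup}(i)--(ii). Conditioning on $\{\beta=B\}$, I would identify the conditional law of $\rho^{\bullet(n)}_k(T_n')$ with the corresponding move (A), (B) or (C) of Definition~\ref{defn:simpdownup}, following cases~(A),~(B),~(C) of the proof of Proposition~\ref{prop:kupdown} with $\alpha=\tfrac12$. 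Case~(A) is a deterministic combinatorial statement about any $n$-tree whose $\rho^{\bullet(n)}_k$-projection is $\ft^\bullet$ with $\pi(I)=B$, so fixing the label sets is immaterial; cases~(B) and~(C) use only the (unchanged) conditional law of the internal structures, via Lemmas~\ref{lem:spatialMarkov} and~\ref{lem:topbeadkernel} and a run of the argument of Lemma~\ref{lem:downinv} for decorated $(k-1)$-trees. In every case the conditional law of $\rho^{\bullet(n)}_k(T_n')$ given $\{\beta=B\}$ depends only on $\ft^\bullet$ and $B$, so averaging over $B$ with weights $\mu(B)/n$ exhibits $Q_g(\ft^\star,\cdot)$ as one transition of the uniform decorated chain of Definition~\ref{defn:simpdownup} from $\ft^\bullet$ --- in particular a function of $g(\ft^\star)=\ft^\bullet$, which is the Kemeny--Snell criterion.

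The main obstacle is justifying the previous paragraph's claim that the case analysis of Proposition~\ref{prop:kupdown} is insensitive to conditioning on the label sets --- that is, that cases~(B) and~(C) there use only the randomness of the internal structures and deterministic combinatorics of the label sets, never the distribution of the label sets. Case~(B) is straightforward: the bead-mass $M_i$ is determined by the internal structure on $\sparent{i}$, whose conditional law is still $q_{y_{\sparent{i}}+1,1/2}$ after conditioning on $\rho^{\star(n)}_k = \ft^\star$, so Lemma~\ref{lem:topbeadkernel} applies verbatim. Case~(C) is the delicate one: one must state and check the decorated-$(k-1)$-tree analogue of Lemma~\ref{lem:downinv} under $\pP(\,\cdot\,|\,\rho^{\star(n)}_k = \ft^\star)$, which is where the bookkeeping is heaviest; the independence of internal structures from label sets furnished by Lemma~\ref{lem:spatialMarkov} is what makes this go through.
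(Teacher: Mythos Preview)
Your proposal is correct and takes essentially the same approach as the paper: both rely on the spatial Markov property (Lemma~\ref{lem:spatialMarkov}) to argue that, under $q_{n,1/2}(\,\cdot\,|\,\rho^{\star(n)}_k=\ft^\star)$, the internal structures are independent of the label sets, and then invoke the case analysis from the proof of Proposition~\ref{prop:kupdown} to conclude that the transition probability depends only on the ranked internal structures, hence only on $\ft^\bullet$. The paper compresses your case-by-case argument into a single sentence noting that $\pP(T^{\star(n)}_k(1)\in g^{-1}(\{\ft^\bullet(1)\})\mid T_n(0))$ depends only on $(\int(V_B(0)),B\in\fs(0))$, whereas you unpack each case explicitly; but the substance is the same.
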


\begin{proof}%[Proof of Proposition \ref{lem:kscollapsed}]
Suppose we are given 
$\ft^\bullet(0), \ft^\bullet(1) \in \bT^{\bullet(n)}_k$ and $T^{\star(n)}_k(0)=\ft^\star(0)$ for some 
$\ft^\star(0)\in g^{-1}(\{\ft^\bullet(0)\})$ aiming to study $K(\ft^\star(0),g^{-1}(\{\ft^\bullet(1)\}))$. 
The induced kernel $K$ on $\bT^{\star(n)}_k$ can be described in the following way. Generate $T_n(0)$ from the 
conditional distribution $q_{n,1/2}(\,\cdot\,|\,\rho^{\star(n)}_k= \ft^\star(0))$ specified in Lemma \ref{lem:spatialMarkov}. Let 
$T_n(1)$ be one step in the uniform chain, starting at $T_n(0)$, and let $T^{\star(n)}_k(1)= \rho^{\star(n)}_k(T_n(1))$.  

Consider the unranked internal structures defined above Definition \ref{defn:internal_structure}. Specifically,  
let $\fs(0)$ and $T_k(1)$ denote the $k$-tree shapes at times zero and one, respectively. Then the internal structures
$\left(V_B(0), B\in\fs(0)\right)$ corresponding to $T_n(0)$ and $\left(V_B(1), B \in T_k(1)\right)$ corresponding to $T_n(1)$ are 
closely related. Indeed, we see by conditioning as in the different cases in the proof of Proposition \ref{prop:kupdown}, that $\pP(T^{\star(n)}_k(1)\in g^{-1}\left(\{\ft^\bullet(1)\}\right)|T_n(0))$ 
depends only on the ranked internal structures $\left( \int\left(V_B(0)  \right), \; B\in \fs(0)  \right)$ and not on the labels in 
$V_B(0)$, $B\in \fs(0)$. Specifically, only the splitting via the regenerative composition in Case (B) and via resampling in Case (C) depends on $T_n(0)$.   
By Lemma \ref{lem:spatialMarkov}, the internal structures are jointly independent of the labels in
$V_B(0)$, $B \in \fs(0)$. Hence, 
$K(\ft^\star(0),g^{-1}(\{\ft^\bullet(1)\}))=\pP(T^{\star(n)}_k(1)\in g^{-1}\left(\{\ft^\bullet(1)\}\right))$ does not depend on the labels
in $V_B(0)$, $B\in\fs(0)$, i.e.\ does not depend on $\ft^\star(0)\in g^{-1}(\{\ft^\bullet(0)\})$. This independence property is the 
Kemeny--Snell criterion.  
\end{proof}

Note that the Kemeny--Snell criterion fails between $\mathcal{X}(\cdot)=T(\cdot)$ and $\mathcal{Y}(\cdot)=T^{\bullet(n)}_k(\cdot)$ for 
instance because, for each edge $B\in\fs(0)$, the internal structure ${\rm int}(V_B)$ determines the size of the first spinal subtree, 
and hence influences the transition probabilities $K(\ft(0),g^{-1}(\{\ft^\bullet(1)\}))$.

%%%%%%%%%%%%%%%%%%%%%%%%%%%%%%%%%%%%%%%%%%%%

\begin{figure}[t]
\centerline{
\xymatrix@=3em{
T_k^{\star(n)}(i) \ar[d]^{\Lambda} \ar[r]^{Q} & T_k^{\star(n)}(i+1) \ar[d]^{\Lambda} \\
T(i) \ar[r]^{P} & T(i+1)
}
}
\caption{Commutative diagram of intertwining of Markov chains on collapsed trees with that on $n$-trees.}
\label{fig:intertwin}
\end{figure}

Now we recall the concept of intertwining from Rogers and Pitman \cite{RogersPitman}. However, since we work in discrete time, unlike 
the continuous time setting in that paper, some of their statements get simplified. Once more consider the set-up described at the 
beginning of Section \ref{Sect23}. The Markov chain $\mcal{X}$ has transition kernel $P$ and a stationary distribution $q$. Let 
$\mcal{Y}(j)=g\left( \mcal{X}(j) \right)$. For $y\in \bY$, set $\Lambda(y, \cdot)=q(\,\cdot\,|\,g=y)$. 
Let $Q$ be a transition kernel on $\bY$ given by $\Lambda P g$, which corresponds to the operator \eqref{eq:Mkvfn}. The following is a 
simplification of \cite[Theorem 2]{RogersPitman} adapted to our case. \pagebreak

\begin{lemma}\label{lem:rogerspitman} Given the Markov chain $(\mcal{X}(j))_{j\ge 0}$, suppose 
\begin{enumerate}[(a)]
\item the distribution of $\mcal{X}(0)$, given $\mcal{Y}(0)=g(\mcal{X}(0))=y$, is $\Lambda(y, \cdot)$.
\item And, the triplet $(\Lambda, P, Q)$ satisfies the equality 
\eq\label{eq:intertwincrit}
\Lambda P= Q \Lambda,\qquad \text{(Intertwining criterion)}
\en
as an equality of two stochastic kernels. 
\end{enumerate}
Then $\mcal{Y}(\cdot)$ is a Markov chain with transition kernel $Q$. Moreover, the conditional distribution of any $\mcal{X}(j+1)$, 
given $\mcal{Y}(0), \ldots, \mcal{Y}(j)$, is $\Lambda\left(  \mcal{Y}(j), \cdot \right)$
\end{lemma}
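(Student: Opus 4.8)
The plan is to prove both assertions simultaneously by induction on $j$, via the strengthened claim that for every $j\ge 0$ the law of $\big(\mcal{Y}(0),\ldots,\mcal{Y}(j),\mcal{X}(j)\big)$ is the one under which $\big(\mcal{Y}(0),\ldots,\mcal{Y}(j)\big)$ is a Markov chain with transition kernel $Q$ and initial law $\nu$, the push-forward by $g$ of the law of $\mcal{X}(0)$, while, conditionally on $\mcal{Y}(j)=y$, the variable $\mcal{X}(j)$ has law $\Lambda(y,\cdot)$ and is independent of $\big(\mcal{Y}(0),\ldots,\mcal{Y}(j-1)\big)$. Granting this, the first conclusion of the lemma follows by summing $\mcal{X}(j)$ out, and the second is exactly the conditional law of $\mcal{X}(j+1)$ that it records. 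I would note at the start that, as $\bX$ and $\bY$ are finite, every conditional law below is elementary; that $Q=\Lambda P g$ has entries $Q(y,y')=\sum_{x\in\bX}\Lambda(y,x)P(x,g^{-1}(\{y'\}))$; and that the only feature of $\Lambda$ used beyond hypotheses (a) and (b) is that $\Lambda(y,\cdot)$ is a probability measure concentrated on $g^{-1}(\{y\})$, which holds since $\Lambda(y,\cdot)=q(\,\cdot\,|\,g=y)$.

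The base case $j=0$ is hypothesis (a) together with $\mcal{Y}(0)=g(\mcal{X}(0))$, which forces $\mcal{Y}(0)\sim\nu$. For the inductive step, since $\big(\mcal{Y}(0),\ldots,\mcal{Y}(j)\big)$ is a deterministic function of $\big(\mcal{X}(0),\ldots,\mcal{X}(j)\big)$, the Markov property of $\mcal{X}(\cdot)$ gives that the conditional law of $\mcal{X}(j+1)$ given $\big(\mcal{Y}(0),\ldots,\mcal{Y}(j),\mcal{X}(j)=x\big)$ is $P(x,\cdot)$. Combining this with the inductive hypothesis, for all $y_0,\ldots,y_j\in\bY$ and $x'\in\bX$,
\[
\pP\big(\mcal{Y}(0)=y_0,\ldots,\mcal{Y}(j)=y_j,\ \mcal{X}(j+1)=x'\big)=\nu(y_0)\Big(\prod_{i=1}^{j}Q(y_{i-1},y_i)\Big)(\Lambda P)(y_j,x').
\]
Now apply the intertwining identity (b): $(\Lambda P)(y_j,x')=(Q\Lambda)(y_j,x')=\sum_{y'\in\bY}Q(y_j,y')\Lambda(y',x')$. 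Since $\Lambda(y',\cdot)$ is supported on $g^{-1}(\{y'\})$, the only nonzero summand is the one with $y'=g(x')$, and on $\{\mcal{X}(j+1)=x'\}$ one has $\mcal{Y}(j+1)=g(x')$. Hence, for every $y_{j+1}\in\bY$ (both sides vanishing unless $y_{j+1}=g(x')$),
\[
\pP\big(\mcal{Y}(0)=y_0,\ldots,\mcal{Y}(j+1)=y_{j+1},\ \mcal{X}(j+1)=x'\big)=\nu(y_0)\Big(\prod_{i=1}^{j+1}Q(y_{i-1},y_i)\Big)\Lambda(y_{j+1},x'),
\]
which is the strengthened claim at level $j+1$.

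The one step that must be executed with care is this passage through the intertwining relation: it is precisely $\Lambda P=Q\Lambda$ that produces the factor $Q\big(y_j,\mcal{Y}(j+1)\big)$ -- the right one-step transition for $\mcal{Y}(\cdot)$ -- and it is precisely the support property $\Lambda\big(y,g^{-1}(\{y\})\big)=1$ that simultaneously identifies $\mcal{Y}(j+1)$ from $\mcal{X}(j+1)$ and yields the conditional independence of $\mcal{X}(j+1)$ from the strict past $\big(\mcal{Y}(0),\ldots,\mcal{Y}(j)\big)$ given $\mcal{Y}(j+1)$. Beyond this the argument is bookkeeping. I would also remark that stationarity of $q$ is not used in the proof of the lemma itself -- only the facts that each $\Lambda(y,\cdot)$ is a well-defined probability measure on $g^{-1}(\{y\})$ and that hypothesis (a) holds; stationarity enters only where (a) is verified, e.g.\ when $\mcal{X}(\cdot)$ is started from $q$.
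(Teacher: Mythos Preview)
Your proof is correct and self-contained. The paper takes a different route: rather than arguing directly, it reduces to \cite[Theorem~2]{RogersPitman} by verifying their hypotheses, namely that $\Lambda g=I$ (immediate since $\Lambda(y,\cdot)$ is a conditional law given $g=y$) and that $\Lambda P^j=Q_j\Lambda$ with $Q_j:=\Lambda P^j g$, which the paper shows equals $Q^j$ by a short induction using $\Lambda P=Q\Lambda$. Your approach instead carries out the standard filtering induction on the joint law of $\big(\mcal{Y}(0),\ldots,\mcal{Y}(j),\mcal{X}(j)\big)$, using the support property $\Lambda(y,g^{-1}(\{y\}))=1$ to collapse the sum in $(Q\Lambda)(y_j,x')$ to a single term. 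The advantage of your route is that it is entirely self-contained and makes explicit exactly where each hypothesis is used; the paper's route is shorter on the page but defers the substance to the reference. Your closing remark that stationarity of $q$ is not used in the lemma itself (only to furnish a $\Lambda$ satisfying (a)) is also correct and worth noting.
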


\begin{proof} The function $\Phi$ in \cite{RogersPitman} is our function $g$. Hence condition (a) in \cite[Theorem 2]{RogersPitman} that 
$\Lambda g=I$, the identity kernel, obviously follows from our definition of $\Lambda$ as a conditional distribution, given $g$. 

To verify condition (b) in \cite[Theorem 2]{RogersPitman}, note that our time is discrete. Hence $P^j$, for $j\ge 0$, is the transition 
semigroup (the semigroup is denoted by $P_t$, $t\ge 0$ in \cite{RogersPitman}). Let $Q_j=\Lambda P^j g$, for $j\ge 0$. We claim that 
$Q_j=Q^j$, where the latter is $Q$ raised to the power $j$. This is true for $j=0$ by the previous paragraph, and for $j=1$ by 
\eqref{eq:intertwincrit}. Now, take $j=2$. From usual matrix multiplication, it follows that
\[
Q_2= \Lambda P^2 g= \left( \Lambda P \right) P g =\left( Q\Lambda\right) Pg = Q\left( \Lambda P\right) g = Q^2 \Lambda g= Q^2. 
\]
An induction over $j$ establishes our claim. Condition (b) in \cite{RogersPitman} requires $\Lambda P^j=Q_j \Lambda$. As $Q_j=Q^j$, we need to show $\Lambda P^j= Q^j \Lambda$. But this is obvious from \eqref{eq:intertwincrit}.
\end{proof}

\begin{prop}\label{lem:intertwin}
The Intertwining criterion \eqref{eq:intertwincrit} holds for the pair $\mcal{Y}(\cdot)=T_k^{\star(n)}(\cdot)$ and $\mcal{X}(\cdot)=T(\cdot)$ with the kernel $\Lambda$ given by the conditional distribution under $q_{n,1/2}$. 
\end{prop}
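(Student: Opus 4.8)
The plan is to verify the intertwining identity $\Lambda P = Q\Lambda$ as an equality of stochastic kernels from $\bT^{\star(n)}_{[k]}$ to $\bT_{[n]}$, where $P$ is the transition kernel of the uniform chain of Definition \ref{defn:simptreeupdown}, $Q$ is the induced transition kernel on collapsed trees $\bT^{\star(n)}_{[k]}$ from \eqref{eq:Mkvfn} (with $g = \rho^{\star(n)}_k$), and $\Lambda(\ft^\star,\cdot) = q_{n,1/2}(\,\cdot\,|\,\rho^{\star(n)}_k = \ft^\star)$. First I would record the conditional law $\Lambda(\ft^\star,\cdot)$ explicitly: given a collapsed tree $\ft^\star = (\fs,(\pi^{-1}(B))_{B\in\fs})$, a sample from $q_{n,1/2}(\,\cdot\,|\,\rho^{\star(n)}_k = \ft^\star)$ is obtained by populating each edge $B$ with an independent internal structure $\mathrm{int}(V_B)$ distributed as $q_{\mu(B),1/2}$ (external $B$) or $\widetilde q_{\mu(B)+1,1/2}$ (internal $B$), then attaching the labels in $\pi^{-1}(B)$ to the leaves of that internal structure in increasing rank order; this is exactly Lemma \ref{lem:spatialMarkov}.

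The heart of the argument is a pathwise coupling. Fix $\ft^\star(0) \in \bT^{\star(n)}_{[k]}$. On one side (the $\Lambda P$ side) I would: sample $T_n(0) \sim \Lambda(\ft^\star(0),\cdot)$, run one step of the uniform chain to get $T_n(1)$, and report $T_n(1)$. On the other side (the $Q\Lambda$ side) I would: first take one $Q$-step from $\ft^\star(0)$ to a random collapsed tree $\ft^\star(1)$, then sample $T_n(1)' \sim \Lambda(\ft^\star(1),\cdot)$, and report $T_n(1)'$. The goal is to show these two procedures produce the same law on $\bT_{[n]}$. Since $Q = \Lambda P g$ by definition, the collapsed tree produced by the $Q$-step has, by construction, exactly the law of $\rho^{\star(n)}_k(T_n(1))$ where $T_n(1)$ is the output of the first procedure. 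So the content of the proposition is that, \emph{conditionally on $\rho^{\star(n)}_k(T_n(1))$}, the full tree $T_n(1)$ from the first procedure is again distributed as $\Lambda$ of that collapsed tree, i.e.\ the uniform chain preserves the ``spatial Markov'' form of the conditional law. I would prove this by the case analysis already set up in the proof of Proposition \ref{prop:kupdown} and Proposition \ref{lem:kscollapsed}: in each of Cases (A), (B), (C), the down-up move acts on the internal structures $(\mathrm{int}(V_B(0)))_B$ in a way that (i) depends only on those internal structures and the selected edge, not on the particular labels in the $\pi^{-1}(B)$'s, and (ii) maps the product-of-$q_{\mu(B),1/2}/\widetilde q_{\mu(B)+1,1/2}$ law to the analogous product law for the new collapsed tree $\ft^\star(1)$ — this is essentially the regenerative/Pólya-urn computations of Lemmas \ref{lem:lempolya}, \ref{ins}, \ref{lem:revamp}, \ref{lem:topbeadkernel}, together with the fact that inserting/deleting a leaf and re-ranking labels interacts correctly with the increasing-rank labelling convention. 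Because the label bijections are all deterministic given the internal structures and are measure-preserving on the relevant DM/regenerative kernels (as used in Corollary \ref{cor:firstresmpdist}), the conditional law of $T_n(1)$ given $\rho^{\star(n)}_k(T_n(1))$ is again $\Lambda$.

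A cleaner way to organize the same idea, which I would actually write, is to invoke part (a)/(b) structure of Lemma \ref{lem:rogerspitman} in reverse: it suffices to exhibit, on a common probability space, a $\bT^{\star(n)}_{[k]}$-valued step and a $\bT_{[n]}$-valued step such that the joint law $(\ft^\star(1), T_n(1))$ satisfies both ``$T_n(1)\sim \Lambda(\ft^\star(1),\cdot)$ conditionally on $\ft^\star(1)$'' and ``marginally $T_n(1)$ has the law of one $P$-step from $\Lambda(\ft^\star(0),\cdot)$ and $\ft^\star(1) = \rho^{\star(n)}_k(T_n(1))$''; the former is Lemma \ref{lem:spatialMarkov} applied after the move, the latter is the definition of $Q$. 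I expect the main obstacle to be the bookkeeping in Case (C): there the $k$-tree shape genuinely changes, leaf $k+1$ gets re-ranked to $k$, and one must check that after the swap-drop-insert-grow operations the reconstituted tree still has the product form with the \emph{new} masses and shape — this is where the argument leans hardest on repeating the proof of Lemma \ref{lem:downinv} ``for decorated/collapsed $k$-trees'' and on the exchangeability of the first $k$ coordinates of $\DirM^{1/2}_{2k-1}$, exactly as in the proofs of Proposition \ref{prop:kupdown} and Corollary \ref{cor:firstresmpdist}. Cases (A) and (B) are comparatively routine: (A) only changes one edge-mass by $\pm 1$ with the shape fixed, and (B) is the regenerative spinal split of Lemma \ref{lem:topbeadkernel}, both of which are visibly functions of the internal structures alone.
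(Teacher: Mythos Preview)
Your approach is essentially the paper's: use the Rogers--Pitman reformulation to reduce to showing that the conditional law of $T_n'$ given $\rho^{\star(n)}_k(T_n')$ is $\Lambda(\rho^{\star(n)}_k(T_n'),\cdot)$, then verify via the spatial Markov property (Lemma~\ref{lem:spatialMarkov}) by the same case analysis (A)/(B)/(C), with Lemma~\ref{lem:downinv} in (A) and Lemma~\ref{lem:topbeadkernel} in (B). One slip in your Case (C): for the \emph{uniform} chain there is no re-ranking of leaf $k+1$ to $k$ --- that is the alpha chain of Definition~\ref{defn:ntreeupdown}; here $\wi\in[k]$ is dropped and \emph{resampled} into some edge $B'$, and the key point is that the resulting three-way split of the internal structure on $B'$ yields three independent uniform subtrees, which the paper isolates as Lemma~\ref{lem:treeremove2} rather than appealing to $\DirM$ exchangeability.
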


Intuitively the intertwining criterion refers to the commutation of the diagram in Figure \ref{fig:intertwin}. Let $P$ and $Q$ be the Markov transition kernels for the uniform chain on $n$-trees and for collapsed trees, respectively. Let 
$\Lambda(\ft^\star, \ft)=q_{n,1/2}(\ft\,|\,\rho^{\star(n)}_k=\ft^\star)$. The function $g$ is $\rho^{\star(n)}_k$. Suppose we are given 
the realization of $T^{\star(n)}_k(j)$, for some $j\ge 0$. Intertwining means that there are two equivalent ways to sample  
$T(j+1)$.
\begin{enumerate}[(i)]
\item Generate $T(j)$ from $\Lambda\left( T^{\star(n)}_k(j), \cdot\right)$ and run one step of the uniform $n$-tree chain to get $T(j+1)$.
\item Run one step of the Markov chain with transition $Q$, starting from $T^{\star(n)}_k(j)$, to get $T^{\star(n)}_k(j+1)$. Then, 
  generate $T(j+1)$ from $\Lambda\left( T^{\star(n)}_k(j+1),\cdot\right)$. 
\end{enumerate}

Before arguing Proposition \ref{lem:intertwin}, let us complete the proof of Theorem \ref{thm:runningalongbottom}. 

\begin{figure}[t]
\centerline{
\xymatrix@=3em{
T_k^{\bullet(n)}(i) \ar[r]^{R} & T_k^{\bullet(n)}(i+1)  \\
T_k^{\star(n)}(i) \ar[u] \ar[r]^{ Q } & T_k^{\star(n)}(i+1) \ar[u]\\
T(i)  \ar[u]  \ar[r]^{P} & T(i+1) \ar[u]
}
} 
\caption{Combining Kemeny--Snell with intertwining. $R$ is the transition kernel for the down-up chain on decorated $k$-trees. The up-arrows represent deterministic projection maps.}% while the down arrows denote conditional distributions under $q_{n,1/2}$.}
\label{fig:intertwin2}
\end{figure}

%\ar[d]_{\Lambda}
%\ar[d]_{\Lambda}
%\ar@/_3pc/[dd]_{L_1L_2}
%\ar@/^3pc/[dd]^{L_1 L_2}

\begin{proof}[Proof of Theorem \ref{thm:runningalongbottom}]
The proof follows from the commutativity of the diagram in Figure \ref{fig:intertwin2}. In fact, one can make a stronger statement. Suppose a realization of $T_k^{\bullet(n)}(0)$ is given. Generate $T_k^{\star(n)}(0)$, given $T_k^{\bullet(n)}(0)$, from the conditional distribution under $q_{n,1/2}$. This amounts to throwing, uniformly at random, balls labeled $k+1, \ldots, n$ to boxes labeled $[2k-1]$, one box for each edge of the tree shape of $T_k^{\star(n)}(0)$, such that the number of balls in each box is consistent with the corresponding edge mass in $T_k^{\bullet(n)}(0)$. Also generate $T(0)$ according to the kernel $\Lambda\left( T_k^{\star(n)}(0), \cdot \right)$. Now run the uniform $n$-tree chain starting from $T(0)$ to get a sequence $(T(j))_{j\ge 0}$. Set  
$T_k^{\star(n)}(j):= \rho^{\star(n)}_k\left( T(j) \right)$ and $T_k^{\bullet(n)}(j):= \rho^{\bullet(n)}_k \left( T(j) \right)$, $j\ge 0$.
By Lemma \ref{lem:rogerspitman} and Proposition \ref{lem:intertwin}, $T_k^{\star(n)}(\cdot)$ is a Markov chain with transition kernel $Q$
and initial state $T_k^{\star(n)}(0)$. Hence, by Lemma \ref{lem:kscollapsed}, $T_k^{\bullet(n)}(\cdot)$ is a Markov chain on decorated 
$k$-trees. When $T_k^{\bullet(n)}(0)$ is random and distributed according to the stationary distribution (which is the push-forward of 
the uniform distribution $q_{n,1/2}$ by the map $\rho^{\bullet(n)}_k$), we get the desired result. 
\end{proof}

To prove Proposition \ref{lem:intertwin} we need the following lemma.

\begin{lemma}\label{lem:treeremove2} 
  Let $T_m\sim q_{m,1/2}$ for some $m \ge 2$. Then, given $T_2^{\star(m)}:=\rho_2^{\star(m)}(T_m)=(T_m \cap [2], (L_B)_{B\in T_m\cap [2]})$, the internal 
  structures $\int(V_{\{1\}})$, $\int(V_{\{2\}})$ and $\int(V_{\{1,2\}})$ of $T_2^{\star(m)}$ are independent and uniformly distributed on the 
  set of binary trees with numbers of leaves given by $ \#L_{\{1\}}, \#L_{\{2\}}$ and $\#L_{\{1,2\}}+1$, respectively. 
\end{lemma}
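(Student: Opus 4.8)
The plan is to recognize Lemma \ref{lem:treeremove2} as essentially the special case $k=2$ of Lemma \ref{lem:spatialMarkov} with $\alpha=\frac12$, specialized further by noting that $\widetilde q_{m,1/2}=q_{m,1/2}$ for every $m$. Concretely, apply Lemma \ref{lem:spatialMarkov} with $k=2$: conditioning $T_m\sim q_{m,1/2}$ on $\rho_2^{\bullet(m)}(T_m)=\big(\fs,(\mu(B))_{B\in\fs}\big)$, where $\fs$ is the unique element of $\bT_{[2]}$ with edges $\{1\},\{2\},[2]$, the three internal structures $\int(V_{\{1\}})$, $\int(V_{\{2\}})$, $\int(V_{[2]})$ are mutually independent, jointly independent of $\big(\pi^{-1}(B)\big)_{B\in\fs}$, and distributed as $q_{\mu(\{1\}),1/2}$, $q_{\mu(\{2\}),1/2}$, and $\widetilde q_{\mu([2])+1,1/2}=q_{\mu([2])+1,1/2}$ respectively — the last because $\widetilde q_{n,\alpha}=q_{n,\alpha}$ for $\alpha=\frac12$, as noted just above Lemma \ref{lem:spatialMarkov}. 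Since $q_{\ell,1/2}$ is by definition (Definition \ref{defn:remy} and the line following it) the uniform distribution on $\bT_{[\ell]}$, this is exactly the uniform-distribution conclusion claimed.

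The remaining point is bookkeeping: translating the conditioning event from $\rho_2^{\bullet(m)}$ (edge masses) to $\rho_2^{\star(m)}$ (the ordered partition $(L_B)_{B}$ of the leaf set). By Lemma \ref{lem:spatialMarkov}'s joint-independence statement, conditioning on $\rho_2^{\star(m)}(T_m)=(\fs,(L_B)_{B\in\fs})$ rather than on just the masses $(\#L_B)_{B\in\fs}$ does not alter the conditional law of the internal structures, because they are already independent of $\big(\pi^{-1}(B)\big)_{B\in\fs}$ given the masses; this is precisely the ``in particular'' clause of Lemma \ref{lem:spatialMarkov}. So the conditional law of $\big(\int(V_{\{1\}}),\int(V_{\{2\}}),\int(V_{[2]})\big)$ given $T_2^{\star(m)}$ is the product of uniform distributions on $\bT_{[\#L_{\{1\}}]}$, $\bT_{[\#L_{\{2\}}]}$, and $\bT_{[\#L_{[2]}+1]}$, matching the statement with $\{1,2\}=[2]$.

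I would spend a sentence justifying the $+1$ in the third coordinate: $[2]$ is an internal edge of $\fs$, so by the discussion preceding Definition \ref{defn:internal_structure}, the leaf set of $V_{[2]}$ is $\pi^{-1}([2])\cup\{1\}$, whence $\int(V_{[2]})\in\bT_{[\#\pi^{-1}([2])+1]}$, and $\#\pi^{-1}([2])=\#L_{[2]}$ by Definition \ref{defn:collapse}. The external edges $\{1\},\{2\}$ contribute no such shift, so $\int(V_{\{1\}})\in\bT_{[\#L_{\{1\}}]}$ and similarly for $\{2\}$. There is essentially no obstacle here: the lemma is a direct corollary of Lemma \ref{lem:spatialMarkov} once one observes $\widetilde q_{\cdot,1/2}=q_{\cdot,1/2}$ and that $q_{\cdot,1/2}$ is the uniform law; the only mild care needed is keeping the $\bullet$-conditioning versus $\star$-conditioning straight, which is handled by the final clause of Lemma \ref{lem:spatialMarkov}.
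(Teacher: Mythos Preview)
Your proposal is correct and follows the paper's approach exactly: the paper's own proof is the single line ``The proof is a direct application of Lemma \ref{lem:spatialMarkov},'' and you have spelled out precisely this application for $k=2$, $\alpha=\tfrac12$, together with the observations $\widetilde{q}_{\cdot,1/2}=q_{\cdot,1/2}$ and that $q_{\cdot,1/2}$ is uniform. Your additional bookkeeping on the $\bullet$-versus-$\star$ conditioning and the $+1$ for the internal edge is accurate and more explicit than the paper itself.
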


\begin{proof}
  The proof is a direct application of Lemma \ref{lem:spatialMarkov}.
\end{proof}

\begin{proof}[Proof of Proposition \ref{lem:intertwin}]% To simplify notation, let $T^\star(j):=T^{\star(n)}_k(j)$ for $j=0,1$. 
By Remark (ii) in \cite[page 575]{RogersPitman}, the Intertwining criterion \eqref{eq:intertwincrit} is equivalent to showing that the conditional distribution of $\mcal{X}(1)$, given $\mcal{Y}(0)=y_0$ and $\mcal{Y}(1)=y_1$, is $\Lambda\left( y_1, \cdot \right)$. We verify this reformulated condition in our case. 

Fix $\ft^\star=\left( \fs, \left(L_B\right)_{B\in \fs}  \right)\in\bT_{[k]}^{\star(n)}$ throughout this proof. Consider the following 
uniform down- and up-move random variables, as in the proof of Proposition \ref{prop:kupdown}: 
\begin{itemize}\item an initial $n$-tree $T_n$, which is distributed according to $\Lambda(\ft^\star,\cdot)$;
  \item a leaf label $I$, uniformly distributed over $[n]$ and independent of $T_n$; then the swapped label $\widetilde{I}$ and the
    tree $T_{n-1}^\prime$ after the down-move (leaving the other labels unchanged as in 
    Definition \ref{defn:simptreeupdown}) are determined, as is the random edge $\beta$ of $\fs$ to which $I$ projects;
  \item an edge $S$ of $T_{n-1}^\prime$ distributed as in the up-move of Definition \ref{defn:simptreeupdown}; this determines
    $T_n^\prime=T_{n-1}^\prime+(S,\widetilde{I})$ after the up-move, as well as the edge $\beta^\prime$ of 
    $T_n^\prime\cap([k]\setminus\{\widetilde{I}\})$ to which $S$ projects (either to insert $\widetilde{I}$ into the collapsed
    tree after the down-move, if $\widetilde{I}>k$, or to split when $\widetilde{I}$ is resampled, if $\widetilde{I}\le k$).
\end{itemize}
We claim that the conditional distribution of $T_n^\prime$ given $\rho^{\star(n)}_k(T_n^\prime)$ is  
$\Lambda\left(\rho^{\star(n)}_k(T_n^\prime), \cdot  \right)$. We write 
$\rho^{\star(n)}_k(T_n^\prime)=(T_k^\prime,(L_B^\prime)_{B\in T_k^\prime})$, where 
$T_k^\prime=T_{n}^\prime\cap[k]$.

By the spatial Markov property Lemma \ref{lem:spatialMarkov}, the above claim amounts to showing that, given 
$\rho^{\star(n)}_k(T_n^\prime)=\fv^\star$, the internal structures of $\fv^\star$ in $T_n^\prime$ are conditionally 
independent trees that are conditionally uniformly distributed with sizes given by the label sets of $\fv^\star$. 

We will prove this claim on a case-by-case basis that mirrors the proof of Proposition \ref{prop:kupdown} showing that, conditioned on 
each case, the claim holds. 
\smallskip

%Let $U(0)$ denote the tree obtained after the down-move from $T(0)$ with a corresponding collapsed tree $U^\star(0)$. Let $\beta$ be the random edge defined in the proof of Proposition \ref{prop:kupdown}. As before the probability that $\{\beta=B\}$ is a function of $T^\star(0)$. 
%\medskip

\noindent\textbf{Case (A).} Consider any $i,\wi\in L_B$ for an edge $B\in \fs$ such that either (i) $B$ is internal or (ii) $B$ is 
external and $\#L_B>1$. Also consider any $B^\prime\in\fs$. We will condition on the event that $I=i$, $\widetilde{I}=\wi$ and 
$\beta^\prime=B^\prime$. Note that on this event the collapsed tree $\rho^{\star(n)}_k(T_n^\prime)$ after the down-move and up-move is 
determined as a deterministic function of $\ft^\star$, $\wi$ and $B^\prime$.

Consider a uniform growth process up to some step $m\ge 1$. Suppose we remove leaf $m$ and the edge below it, it follows from the uniform growth rules  
that the remaining subtree with leaves labeled by $[m-1]$ is uniformly distributed. By exchangeability of leaf labels, removing any 
other leaf $\wi\in[m-1]$, the internal structure of the remaining tree is uniformly distributed. Similarly, if we insert a new label, 
the new tree with the additional leaf is uniformly distributed with the given set of labels. 

In this case, as in the proof of Proposition \ref{prop:kupdown}, the dropped label $\wi$ is at least $k+1$. There is no 
resampling: the edge $B$ loses label $\wi$, the label set of the edge $B^\prime$ gains the label. Hence, the tree shape 
remains $\fs$ after the down-move and also after the up-move. By Lemma \ref{lem:spatialMarkov}, the internal structures of $\ft^\star$ 
in $T_n$ are mutually independent uniform binary trees under $\pP$, and since $I$ is independent of $T_n$, this also holds under 
$\pP(\,\cdot\,|\,I=i)$. Consider the internal structure $\int(V_B)$ of $\ft^\star$ in $T_n$ supported by edge
$B$.  Applying Lemma \ref{lem:downinv} to $\int(V_B)$ and further conditioning on $\widetilde{I}=\wi$, the 
internal structures after the down-move are independent under $\pP(\,\cdot\,|\,I=i,\widetilde{I}=\wi)$. By the previous paragraph, the 
same holds for the internal structures after the up-move, under $\pP(\,\cdot\,|\,I=i,\widetilde{I}=\wi,\beta^\prime=B^\prime)$, as 
required.

\smallskip

\nin\tbf{Case (B).} Consider any $i\in[k]$ with $x_i=1$ and $y_{\sparent{i}}>0$ and 
  $\wi\in L_{\sparent{i}}$. Also consider any $B^\prime\in\fs$. We will condition on the event that $I=i$, 
  $\widetilde{I}=\wi$ and $\beta^\prime=B^\prime$. Note that on this event the collapsed tree $\rho^{\star(n)}_k(T_n^\prime)$ is 
  not usually a deterministic function of $\ft^\star$, $i$, $\wi$ and $B^\prime$, since the down-move splits a new label set for 
  $\{i\}$ off the label set $L_{\sparent{i}}$. The tree shape is unaffected by this.

%As before, consider $\pP\left( \cdot \mid \mG_1\right)$ on the event $\{\beta=B\}$.

%Leaf $\{j\}$ on $T(0)$ is removed in the down-move and $j$ is such that on $T^\star(0)$ $x_j=1$ but $y_{\sparent{j}}>0$. Thus, on $T(0)$, leaf $j$ is chosen for removal, swaps with the largest of the two smallest label in the first two subtree from $\{j\}$ to the root, the swapped label is removed and resampled elsewhere. It is clear from our argument so far that the resampling preserves the uniform distribution and independence of each $\int(\fu_B)$. Hence we will be done if we show that the swapping also preserves the uniform distribution and independence of each $\fu_B$.

%Follow the argument in the corresponding case in the proof of Proposition \ref{prop:kupdown}. 

Let $B_1=\sparent{i}$ and consider the associated unranked internal structure $V_{B_1}$ of $\ft^\star$ in $T_n$. The 
swapped label $\wi$ is in $V_{B_1}$ as for the corresponding case in the proof of Proposition \ref{prop:kupdown}. Since $B_1$ is an 
internal edge of $\fs$, $V_{B_1}$ has a leaf labeled $1$ that corresponds to $B_1$ (see Figure \ref{fig:internal_structure}). 
%For the purpose if this particular case, label this leaf by $j$, thus turning $\fv_{B_1}$ to a uniform binary tree with leaf labels $\{j\} \cup \pi^{-1}(B_1)$. Since all labels in $\pi^{-1}(B_1)$ are more than $k\ge j$, this does not change its internal structure. 
Consider the first subtree $U$ on the spinal decomposition from leaf $1$ to the root in $V_{B_1}$. Then, during the down-move, leaf $i$ 
in $T_n$ swaps with the smallest label $\wi$ in this subtree to give us the new unranked internal structure of edge $\{i\}\in\fs$ in
$T_{n-1}^\prime$. Remove this subtree $U$ from $V_{B_1}$, but leave leaf $1$. The remaining subtree (say, $V$) in $V_{B_1}$ is precisely 
the new unranked internal structure of edge $B_1\in \fs$ in $T_{n-1}^\prime$. 
We note that their label sets, denoted by $L_{B_1}^{\prime\prime}$ and $L_{\{i\}}^{\prime\prime}$, determine 
$\rho^{\star(n-1)}_k(T_{n-1}^\prime)$ and are recorded in $\rho^{\star(n-1)}_k(T_{n-1}^\prime)$. The rest of the internal structures is 
not affected. Applying Lemma \ref{lem:topbeadkernel} to 
$\int(V_{B_1})$, we obtain that the internal structures after the down-move are independent under 
$\pP(\,\cdot\,|\,I=i,\widetilde{I}=\wi,L_{B_1}^{\prime\prime},L_{\{i\}}^{\prime\prime})$. The up-move is similar to the previous case. 

\smallskip

\noindent\textbf{Case (C).} Consider any $i\in[k]$ with $x_i=1$ and $y_{\sparent{i}}=0$. On $\{I=i\}$, this determines 
$\widetilde{I}=\wi\in[k]$ and the $(k-1)$-tree shape $T_n^\prime\cap([k]\setminus\{\widetilde{I}\})=\widetilde{\fv}$ after the down-move. 
Also consider $B^\prime\in\widetilde{\fv}$. As usual, condition on the event that  $I=i$, $\widetilde{I}=\wi$ and 
$\beta^\prime=B^\prime$.

In this case some label in $[k]$ is dropped and resampled. Hence the fact that the internal structures after the down-move are mutually 
independent uniform binary trees under $\pP(\,\cdot\,|\,I=i,\widetilde{I}=i)$ is obvious. We focus on the resampling step.  

When the label $\wi$ resamples it affects the collapsed tree after the down-move in the following way. The label $\wi$ is added to the 
label set of the edge $\beta'=B'$ increasing its weight by one, and the label set is then split according to a three-color P\'olya urn, 
as follows, into three sets that we denote by $L_{B^\prime}^{\prime}$, $L_{\{\wi\}}^\prime$ and $L_{B^\prime\cup\{\wi\}}^\prime$. If 
$B^\prime=\{j\}$ is external, initially the two lowest labels $\wi,j\in[k]$ have the first two colors. If $B^\prime$ is 
internal, the lowest label $\wi\in[k]$ is initially allocated the last color. In both cases, the other labels of $B^\prime$ are added 
in increasing order according to the standard update rules with initial weights $(1/2,1/2,1/2)$. This is so, because the internal 
structure supported by $B^\prime$ after the down-move is uniform and stays uniform after insertion of $\wi$, so that this split can be 
read from the proof of Corollary \ref{cor2}. In particular, this achieves the split of the edge weight of edge $B'$ according to the 
Dirichlet multinomial kernel given in Definition \ref{defn:resample}. This split determines $\rho^{\star(n)}_k(T_n^\prime)$ and is 
recorded in $\rho^{\star(n)}_k(T_n^\prime)$. Resampling further inserts the label corresponding to $\wi$ into $\int(V_{B^\prime})$ as in
the up-move of Definition \ref{defn:simptreeupdown} as either the smallest (external edge with $\wi<j$) or second smallest (internal 
edge or external with $\wi>j$) label. This results in a uniform tree, by uniform growth and exchangeability. By Lemma 
\ref{lem:treeremove2}, the three new internal structures are independent. The rest of the internal structures being unaffected, we 
conclude that the internal structures of $\rho^{\star(n)}_k(T_n^\prime)$ in $T_n^\prime$ are independent under
$\pP(\,\cdot\,|\,I=i,\widetilde{I}=\wi,\beta^\prime=B^\prime,L_{B^\prime}^{\prime},L_{\{\wi\}}^\prime,L_{B^\prime\cup\{\wi\}}^\prime)$.

\smallskip

%Under $\pP\left(  \cdot \mid \fu^\star, \beta'=B' \right)$, the internal structure $\int(\fv_{B'})$ supported on $B'\in \fu$ is uniformly distributed. Consider the lowest label in $\fv_{B'}$. For the purpose of this argument, label it $1$. This does not change the internal structure. Consider the spinal decomposition from $1$ to the root in $\fv_{B'}$. Suppose that a new labeled leaf is inserted uniformly at random on any edge. Using exchangeability of labels, we can label this new leaf to be $2$. If we now rank the leaf labels, leaves $1$ and $2$ retain their labels and the rest of the leaf labels project on the three edges of the subtree spanned by the two leaves. The mutual independence and uniform distribution of the internal structures supported on these three edges in guaranteed by Lemma \ref{lem:treeremove2}. The rest of the internal structures have not changed. Hence, $\left( \int(\fv_B),\; B\in \ft  \right)$ are mutually independent uniform binary trees under $\pP\left(  \cdot \mid \fu^\star, \beta=B' \right)$. But this is equivalent to the same conclusion under $\pP\left(  \cdot \mid \ft^\star \right)$.
Since the joint conditional distributions of internal structures given the events that appeared in the three (exhaustive) cases, only 
depend on the collapsed tree $\rho^{\star(n)}_k(T_n^\prime)$, this joint distribution is also the conditional distribution given just 
the collapsed tree. This completes the proof. 
\end{proof}

\appendix

\section{Trees decorated with strings of beads}

Although a decorated $k$-tree $\rho^{\bullet(n)}_{k}(\ft)$ contains information on the leaf masses of collapsed subtrees of $\ft$, it does not contain much information on lengths in these subtrees. 
For an illustration of this point, see the left-most tree in Figure \ref{fig:internal_structure}. The three red edges in $\ft$ contract to a single edge in $\rho^{\bullet(n)}_{k}(\ft)$. This number, $3$, is lost in the projection. To construct the Aldous diffusion as discussed in Section \ref{sec:intro:AD}, we must retain more of this length information. But if we simply enrich our state space by decorating edges with lengths, in addition to weights, then the resulting projected processes would be non-Markovian. To retain the Markov property while incorporating length information, we decorate each edge by a composition, which we call a \textit{(discrete) string of beads}.

For $m\in\bN$, denote by $\cC_m=\bigcup_{\ell\ge 0}\{(y_1,\ldots,y_\ell)\in\bN^\ell\colon y_1+\cdots+y_\ell=m\}$ the set of compositions of $m$, with the convention that $\cC_0$ is a singleton set containing the empty composition, denoted by `$()$'. We think of compositions as sequences of bead sizes, strung along a length of string. Continuum strings of beads were introduced to study weighted continuum random trees \cite{PW09,RembWinkString}. In that setting, a string of beads is a purely atomic measure on an interval. Here, the composition $(y_1,\ldots,y_\ell)$ could be thought of as describing a measure $y_1\delta_1+\cdots+y_\ell\delta_\ell$ on an interval $[0,\ell+1]$.

For $\fs\in\bT_{[k]}$, recall the set of edge weight functions $\cN_m^\fs$ from Definition \ref{defn:edgemass}. The set of \emph{$k$-trees decorated with strings of beads} is given by 
\[
 \bT_{[k]}^{\circ(n)} = \bigcup_{\fs\in\bT_{[k]}} \{\fs\} \times\left(  \bigcup_{f\in\cN_{n}^\fs}\{(f(\{1\}),\ldots,f(\{k\})\}\times\prod_{B\in\fs\colon\#B\ge 2}\cC_{f(B)}\right).
\]
We interpret an element $\ft^\circ = (\fs, x_1,\ldots,x_k,(y_1^B,\ldots,y_{\ell_B}^B)_{B\in\fs\colon\#B\ge 2})\in\bT_{[k]}^{\circ (n)}$ as a $k$-tree $\fs\in\bT_{[k]}$ with leaf masses $x_1,\ldots,x_k$, each at least one, and with each internal edge $B$ decorated by a (possibly empty) string of beads $(y_1^B,\ldots,y_{\ell_B}^B)$, with bead masses listed in order of increasing distance from the root. 
For example, in the right panel of Figure \ref{fig:k_tree_string}, the edges have decorations
\begin{gather*}
 x_1 = 1, \quad 
 x_2 = 1, \quad 
 x_3 = 1, \quad 
 x_4 = 3, \quad 
 x_5 = 2,\\
 \left(y^{\{1,4\}}_\bullet\right) = (1), \quad 
 \left(y^{\{2,5\}}_\bullet\right) = (), \quad 
 \left(y^{\{2,3,5\}}_\bullet\right) = (1,2), \quad 
 \left(y^{[5]}_\bullet\right) = (2).
\end{gather*}
Now, $\ell_B+1$ is the \emph{length of edge $B$} in $\ft^\circ$, and $y_1^B+\cdots+y_{\ell_B}^B$ is its \emph{mass}.

\begin{figure}\centering
 \input{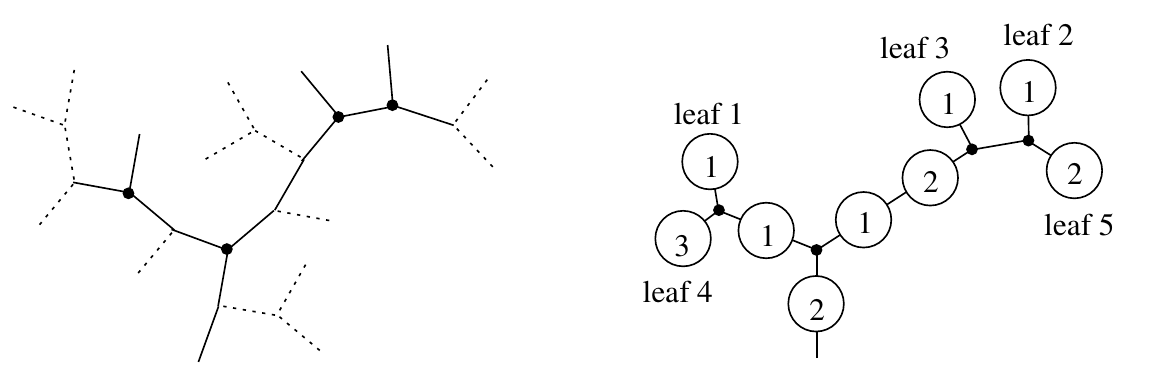_t}
 \caption{$\bT_{[14]}\ni\ft\mapsto\rho^{\circ(14)}_{5}(\ft)\in\bT^{\circ(14)}_{5}$.\label{fig:k_tree_string}}
\end{figure}

Analogously to Section \ref{sec:decoration}, we define a projection $\rho^{\circ(n)}_k\colon\bT_{[n]}\rightarrow\bT_{[k]}^{\circ(n)}$. Fix $\ft\in\bT_{[n]}$. We define
\begin{equation*}
 \fr = \big\{B\in\ft\colon \#(B\cap [k])\ge 2\big\} \cup \left\{\bigcup\nolimits_{B\in\ft\colon B\cap[k] = \{j\}}B\ \middle|\ j\in [k]\right\}.
 %\big\{\{j\}\colon j\in [k]\big\}.
\end{equation*}
We view $\fr$ as a subgraph of $\ft$ formed by pruning away fringe subtrees that contain no labels from $[k]$, and if there is a path of degree-2 branch points terminating in a leaf, contracting that down to a single edge. See the left panel of Figure \ref{fig:k_tree_string}, in which the subgraph $\fr$ is shown in solid lines, with $\ft\setminus\fr$ shown in dashed lines. %Compare $\fr$ to the subtree $\fs$ considered in Section \ref{sec:decoration}. That subtree can be formed by taking the span of leaves 1 up to $k$ and then contracting away all resulting degree-2 vertices. Here, we leave those vertices in place if they lie below some degree-3 branch point and the root.

Denote by $\mu$ the measure on $\ft$ that assigns to each of the leaves $\{1\},\ldots,\{n\}$ mass 1. Consider the projection map $\pi_1\colon\ft\rightarrow\fr$ given by $B\mapsto \bigcap_{C\in\fr\colon C\supseteq B}C$. 
%\begin{equation*}
% \pi(B) = \left\{\begin{array}{ll}
% 			B\cap [k]	& \text{if }\#(B\cap [k]) = 1\text{, or}\\[2pt]
% 			\bigcap_{C\in\fr\colon C\supseteq B}C	& \text{otherwise.}
% 		\end{array}\right.
%\end{equation*}
%$B\mapsto B\cap [k]$ if $|B\cap [k]| = 1$, or $B\mapsto \bigcap_{C\in\fr\colon C\supseteq B}C$ otherwise.
Denote by $\widetilde\mu$ the $\pi_1$-pushforward of $\mu$. Note that, for $B\in\fr$, $\widetilde\mu\{B\} = 0$ if and only if $B$ is a degree-3 branch point in $\fr$. Indeed, if $B$ is a leaf in $\fr$ then its $\pi_1$-pre-image is a subtree containing one leaf with low label ($\le k$), and possibly more high-labeled leaves. If $B$ is a degree-2 branch point in $\fr$ then its $\pi_1$-pre-image is a fringe subtree that branches off of $\fr$ at $B$, which therefore contains at least one high-labeled leaf. But if $B$ has degree 3 in $\fr$ then its $\pi_1$-pre-image is only $\{B\}$.

Now define $\fs := \ft\cap [k] = \fr\cap [k]$, and let $\pi_2\colon \fr\to\fs$ denote the map $B\mapsto B\cap [k]$. This map bijects the leaves and degree-3 branch points of $\fr$ with the vertices of $\fs$, but the branch points of $\fs$ may also have degree-2 vertices of $\fr$ in their $\pi_2$-pre-image. For $j\in [k]$ we define %$x_j := \tilde\mu\big(\pi_2^{-1}(\{j\})\big)$. 
$x_j$ to be the $\widetilde\mu$-mass of the leaf of $\fr$ that maps to $\{j\}$ via $\pi_2$. 
For $B\in\fs$ with $\#B\ge 2$, let $\ell_B := \#\pi_2^{-1}(B) - 1$. For the purpose of the following, denote the elements of $\pi_2^{-1}(B)$ by $B_1,\ldots,B_{\ell_B+1}$, in order of increasing distance from the root, so that $B_1,\ldots,B_{\ell_B}$ all have degree 2 and $B_{\ell_B+1}$ has degree 3. We define $\big(y^B_j\big){}_{j\in [\ell_B]} := (\widetilde\mu(B_j))_{j\in [\ell_B]}$. %For $j\in [\ell_B]$ we define $y^B_j = \tilde\mu(B_j)$. 
Finally, %the \emph{$k$-tree with edges decorated by strings of beads associated with $\ft$} is
\[
 \rho^{\circ(n)}_k(\ft) := \left((x_j)_{j\in[k]},\big(y_1^B,\ldots,y_{\ell_B}^B\big)_{B\in\fs\colon \#B\ge2}\right).
\]
 %\ft^\circ := (x_1(\ft),\ldots,x_k(\ft),(y_1^B(\ft),\ldots,y_{\ell_B(\ft)}^B(\ft))_{B\in\fs}).

\begin{thm}\label{thm projectedchainstrings}
Let $(T(j))_{j\geq 0}$ be a uniform chain running in stationarity.  Then, for $k\leq n$, $(\rho^{\circ(n)}_k(T(j)))_{j\geq 0}$ is a Markov chain running in stationarity. 
\end{thm}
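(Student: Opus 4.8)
The plan is to follow the strategy of the proof of Theorem~\ref{thm:runningalongbottom}, but with the collapsed chain $T^{\star(n)}_k(\cdot)$ replaced by a slightly finer intermediate chain that remembers the arrangement of the fringe subtrees strung along each collapsed edge. Concretely, for $\ft\in\bT_{[n]}$ with $\fs=\ft\cap[k]$, I would let $\rho^{\triangleright(n)}_k(\ft)=\ft^\triangleright$ record the $k$-tree shape $\fs$, the label set $\pi^{-1}(\{j\})$ of each external edge, and, for each internal edge $B$ of $\fs$, the ordered sequence $(L^B_1,\ldots,L^B_{\ell_B})$ of label sets of the fringe subtrees hanging from the spine of the collapsed edge $B$, listed in order of increasing distance from the root (so that $L^B_1\sqcup\cdots\sqcup L^B_{\ell_B}=\pi^{-1}(B)$). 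Thus $\rho^{\triangleright(n)}_k$ refines $\rho^{\star(n)}_k$ exactly by recording the order and grouping of the high labels on each internal edge, and $\rho^{\circ(n)}_k(\ft)=g^\circ(\ft^\triangleright)$ is the deterministic image of $\ft^\triangleright$ under the map $g^\circ$ that replaces every recorded label set by its cardinality ($\#\pi^{-1}(\{j\})=x_j$, $\#L^B_i=y^B_i$). Write $T^{\triangleright(n)}_k(j):=\rho^{\triangleright(n)}_k(T(j))$.

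The argument then has the same two-step structure as in Figure~\ref{fig:intertwin2}, with the middle row replaced by $T^{\triangleright(n)}_k(\cdot)$ and the top row by $T^{\circ(n)}_k(\cdot)$. First I would establish an enriched spatial Markov property: iterating Lemma~\ref{lem:topbeadkernel} down each spine and combining with Lemma~\ref{lem:spatialMarkov}, conditionally on $\rho^{\triangleright(n)}_k(T_n)=\ft^\triangleright$ under $q_{n,1/2}$ the unranked internal structures of the external subtrees and of all the fringe subtrees are mutually independent uniform binary trees on the corresponding leaf sets, jointly independent of the labels. Taking $\Lambda(\ft^\triangleright,\cdot)=q_{n,1/2}(\,\cdot\,|\,\rho^{\triangleright(n)}_k=\ft^\triangleright)$, Step~1 is then to prove the intertwining identity~\eqref{eq:intertwincrit} for the pair $\mcal{X}(\cdot)=T(\cdot)$, $\mcal{Y}(\cdot)=T^{\triangleright(n)}_k(\cdot)$ in the reformulation used in the proof of Proposition~\ref{lem:intertwin} (Remark~(ii) of Rogers--Pitman): that the conditional law of $T_n'$ after one uniform down-up step, given $\rho^{\triangleright(n)}_k(T_n')$, is again $\Lambda(\rho^{\triangleright(n)}_k(T_n'),\cdot)$, i.e.\ the external and fringe internal structures of $T_n'$ are conditionally independent uniform given $\rho^{\triangleright(n)}_k(T_n')$. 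I would check this on the same three cases $(A),(B),(C)$ as in Propositions~\ref{prop:kupdown} and~\ref{lem:intertwin}, now also tracking how the bead compositions change: in case~(A) a high leaf is deleted from and a leaf is inserted into some fringe or external subtree, which stays uniform by the growth rules and exchangeability, except that if the deleted leaf was a solitary one-leaf bead that bead vanishes and $\ell_B$ drops by one; in case~(B) the bead-block split off the parent edge $\sparent{i}$ is governed by the regenerative $(\tfrac12,\tfrac12)$-composition, so by Lemma~\ref{lem:topbeadkernel} the residual beads still form such a composition with independent uniform fringe structures; in case~(C) resampling splits an edge's mass by the three-colour P\'olya scheme of Lemma~\ref{lem:treeremove2}, so the three new structures are again independent uniform. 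The up-move, being a uniform edge choice in $T_{n-1}'$, either grows one of these subtrees (keeping it uniform) or lands on a spine edge and creates a new size-one bead (a single, trivially uniform leaf), with probabilities proportional to the respective edge counts. Step~2 is to check, exactly as in Proposition~\ref{lem:kscollapsed}, that the induced transition kernel of $T^{\triangleright(n)}_k(\cdot)$, aggregated over the fibres of $g^\circ$, depends only on $g^\circ$ of the current state: the edge chosen in a transition is selected with probability proportional to masses, the swap in case~(C) depends only on $\fs$, and the composition and resampling splits depend only on masses (since $\alpha=\tfrac12$ and the internal structures are uniform); exchangeability of the high labels then yields a measure-preserving bijection between the sample paths leading to a given $g^\circ$-value of the next state, as in the coupling in the proof of Corollary~\ref{cor:firstresmpdist}. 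Step~1 and Lemma~\ref{lem:rogerspitman} give that $T^{\triangleright(n)}_k(\cdot)$ is Markov, and then Step~2 and Lemma~\ref{lem:KSlemma} show that $T^{\circ(n)}_k(\cdot)=g^\circ(T^{\triangleright(n)}_k(\cdot))$ is a Markov chain, running in stationarity when $T(\cdot)$ does.

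I expect the main obstacle to be the case analysis in Step~1, and specifically the bookkeeping of the beads: a single down-up step may create beads (the up-move dropping a leaf onto a spine edge), destroy beads (a down-move deleting a solitary high leaf and contracting its branch point), and move a bead-block between an external edge and its parent (case~(B)); in each instance one must verify that the resulting arrangement is distributed as $\Lambda$ of the new spinally collapsed tree. The regenerative property of the $(\tfrac12,\tfrac12)$-composition structure (Lemma~\ref{lem:topbeadkernel}) is exactly what makes this consistency work, and once the bead structure is absorbed into the internal-structure bookkeeping, the remaining verifications reduce, case by case, to those already carried out for $\rho^{\star(n)}_k$ in the proof of Proposition~\ref{lem:intertwin}.
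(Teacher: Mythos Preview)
Your proposal is correct and follows essentially the same approach as the paper, which in fact omits the proof entirely with the remark that it ``is very similar to that already given.'' Your refinement $\rho^{\triangleright(n)}_k$ (recording ordered label sets along each collapsed internal edge) is precisely the natural intermediate object needed to rerun the two-step intertwining/Kemeny--Snell argument at the level of strings of beads, and your case analysis mirrors Propositions~\ref{lem:kscollapsed} and~\ref{lem:intertwin} with the expected additional bead bookkeeping.
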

The proof of this extension of Theorem \ref{thm projectedchain} is very similar to that already given, so we omit it. We note this because we will give a completely different proof of the analogous result in the continuum, and the discrete proofs presented in this paper will shed more light on the intuition behind those results. 

We also note that the discrete strings of beads considered here could also be taken as representations of finite \textit{interval partitions}. In the continuum, however, there is a crucial advantage working with interval partitions rather than strings of beads (purely atomic measures). This is the approach taken in \cite{Paper1}, which the reader can consult for a discussion of interval partitions in this setting.

As a final remark, one may wonder whether the down-up chain in Definition \ref{defn:simptreeupdown} is the only modification of label dynamics for the Aldous chain that would make Theorem \ref{thm projectedchain} possible. We found our down-up chain as a natural way to project the stationary Aldous chain down to a stationary evolving $2$-tree with leaf masses and a string of beads. This idea goes back to \cite{Pal2011}. By working our way upwards, from $2$ up to $n$ leaves, while keeping the consistency in stationarity, we arrived at Definition \ref{defn:simptreeupdown}. The proof presented here is reversed from the order in which it was discovered, beginning with label dynamics for $n$ leaves and then projecting down, and only considering edge masses rather than strings of beads for simplicity.

\bibliographystyle{abbrv}
\bibliography{ad}

\def\cprime{$'$}
\begin{thebibliography}{10}

\bibitem{Ald-91}
D.~Aldous.
\newblock The continuum random tree. {I}.
\newblock {\em Ann. Probab.}, 19(1):1--28, 1991.

\bibitem{AldousDiffusionProblem}
D.~Aldous.
\newblock Wright-{F}isher diffusions with negative mutation rate!
\newblock \url{http://www.stat.berkeley.edu/~aldous/Research/OP/fw.html}, 1999.

\bibitem{Aldous00}
D.~J. Aldous.
\newblock Mixing time for a {M}arkov chain on cladograms.
\newblock {\em Combin. Probab. Comput.}, 9(3):191--204, 2000.

\bibitem{LVB}
D.~Barker.
\newblock {\em LVB 1.0: reconstructing evolution with parsimony and simulated
  annealing}.
\newblock Daniel Barker, 1997.

\bibitem{nCRP}
D.~M. Blei and M.~I. Jordan.
\newblock The nested {C}hinese restaurant process and {B}ayesian nonparametric
  inference of topic hierarchies.
\newblock {\em J. ACM}, 57(2), 2010.

\bibitem{BoroFerr14}
A.~Borodin and P.~L. Ferrari.
\newblock Anisotropic growth of random surfaces in 2+ 1 dimensions.
\newblock {\em Communications in Mathematical Physics}, 325(2):603--684, 2014.

\bibitem{BoroOlsh09}
A.~Borodin and G.~Olshanski.
\newblock Infinite-dimensional diffusions as limits of random walks on
  partitions.
\newblock {\em Probab. Theory Related Fields}, 144(1-2):281--318, 2009.

\bibitem{DiacFill90}
P.~Diaconis and J.~A. Fill.
\newblock Strong stationary times via a new form of duality.
\newblock {\em Ann. Probab.}, 18(4):1483--1522, 1990.

\bibitem{EW}
S.~N. Evans and A.~Winter.
\newblock Subtree prune and regraft: a reversible real tree-valued {M}arkov
  process.
\newblock {\em Ann. Probab.}, 34(3):918--961, 2006.

\bibitem{felsenstein}
J.~Felsenstein.
\newblock {\em Inferring Phylogenies}.
\newblock Sinauer, 2003.

\bibitem{Fill92}
J.~A. Fill.
\newblock Strong stationary duality for continuous-time {M}arkov chains. {I}.
  {T}heory.
\newblock {\em J. Theoret. Probab.}, 5(1):45--70, 1992.

\bibitem{For-05}
D.~J. Ford.
\newblock {\em Probabilities on cladograms: introduction to the alpha model}.
\newblock Ph. D. thesis, Stanford University, 241 p., article version also
  available at arXiv:math.PR/0511246, 2006.

\bibitem{Paper1}
N.~Forman, S.~Pal, D.~Rizzolo, and M.~Winkel.
\newblock Diffusions on a space of interval partitions with
  {P}oisson-{D}irichlet stationary distributions.
\newblock arXiv:1609.06706, 2016.

\bibitem{Fulman05}
J.~Fulman.
\newblock Stein's method and {P}lancherel measure of the symmetric group.
\newblock {\em Trans. Amer. Math. Soc.}, 357(2):555--570, 2005.

\bibitem{Fulman09b}
J.~Fulman.
\newblock Commutation relations and {M}arkov chains.
\newblock {\em Probab. Theory Related Fields}, 144(1-2):99--136, 2009.

\bibitem{Fulman09}
J.~Fulman.
\newblock Mixing time for a random walk on rooted trees.
\newblock {\em Electron. J. Combin.}, 16(1):Research Paper 139, 13, 2009.

\bibitem{gp}
A.~Gnedin and J.~Pitman.
\newblock Regenerative composition structures.
\newblock {\em Ann. Probab.}, 33(2):445--479, 2005.

\bibitem{HMPW}
B.~Haas, G.~Miermont, J.~Pitman, and M.~Winkel.
\newblock Continuum tree asymptotics of discrete fragmentations and
  applications to phylogenetic models.
\newblock {\em Ann. Probab.}, 36(5):1790--1837, 2008.

\bibitem{KS60}
J.~Kemeny and J.~Snell.
\newblock {\em {Finite Markov chains}}.
\newblock University series in undergraduate mathematics. Van Nostrand, 1960.

\bibitem{Kerov96}
S.~Kerov.
\newblock The boundary of {Y}oung lattice and random {Y}oung tableaux.
\newblock In {\em Formal power series and algebraic combinatorics ({N}ew
  {B}runswick, {NJ}, 1994)}, volume~24 of {\em DIMACS Ser. Discrete Math.
  Theoret. Comput. Sci.}, pages 133--158. Amer. Math. Soc., Providence, RI,
  1996.

\bibitem{nHDP}
J.~Paisley, C.~Wang, D.~M. Blei, and M.~I. Jordan.
\newblock Nested hierarchical {D}irichlet processes.
\newblock {\em IEEE Transactions on pattern analysis and machine intelligence},
  37(2), 2015.

\bibitem{Pal2011}
S.~Pal.
\newblock {On the Aldous diffusion on Continuum Trees. I}.
\newblock {\em arXiv:1104.4186 [math.PR]}, 2011.

\bibitem{Pal13}
S.~Pal.
\newblock Wright-{F}isher diffusion with negative mutation rates.
\newblock {\em Ann. Probab.}, 41(2):503--526, 2013.

\bibitem{Pang17}
C.~Y.~A. Pang.
\newblock Lumpings of algebraic {M}arkov chains arise from subquotients.
\newblock arXiv: 1508.01570v2 [math.CO].

\bibitem{Pet2009}
L.~A. Petrov.
\newblock A two-parameter family of infinite-dimensional diffusions on the
  {K}ingman simplex.
\newblock {\em Funktsional. Anal. i Prilozhen.}, 43(4):45--66, 2009.

\bibitem{PW09}
J.~Pitman and M.~Winkel.
\newblock Regenerative tree growth: binary self-similar continuum random trees
  and {P}oisson-{D}irichlet compositions.
\newblock {\em Ann. Probab.}, 37(5):1999--2042, 2009.

\bibitem{RembWinkString}
F.~Rembart and M.~Winkel.
\newblock Recursive construction of continuum random trees.
\newblock To appear in {\it Ann.\ Probab.}. Preprint at arXiv:1607.05323
  [math.PR], 2016.

\bibitem{remy}
J.-L. R{\'e}my.
\newblock Un proc\'ed\'e it\'eratif de d\'enombrement d'arbres binaires et son
  application \`a leur g\'en\'eration al\'eatoire.
\newblock {\em RAIRO Inform. Th\'eor.}, 19(2):179--195, 1985.

\bibitem{RogersPitman}
L.~C.~G. Rogers and J.~W. Pitman.
\newblock Markov functions.
\newblock {\em The Annals of Probability}, 9(4):573--582, 1981.

\bibitem{mrbayes}
F.~Ronquist, M.~Teslenko, P.~Van Der~Mark, D.~L. Ayres, A.~Darling,
  S.~H{\"o}hna, B.~Larget, L.~Liu, M.~A. Suchard, and J.~P. Huelsenbeck.
\newblock {MrBayes 3.2: efficient Bayesian phylogenetic inference and model
  choice across a large model space}.
\newblock {\em Systematic biology}, 61(3):539--542, 2012.

\bibitem{Schweinsberg02}
J.~Schweinsberg.
\newblock An {$O(n^2)$} bound for the relaxation time of a {M}arkov chain on
  cladograms.
\newblock {\em Random Structures Algorithms}, 20(1):59--70, 2002.

\end{thebibliography}

\end{document}